\def\@settitle{%
  \vspace*{-20pt}
  \begin{flushleft}%
    \baselineskip14\p@\relax
    \normalfont\bfseries\LARGE
    \@title
  \end{flushleft}%
}
\def\@setauthors{%
  \begingroup
  \def\thanks{\protect\thanks@warning}%
  \trivlist
  \large \@topsep30\p@\relax
  \advance\@topsep by -\baselineskip
  \item\relax
  \author@andify\authors
  \def\\{\protect\linebreak}%
  \authors
  \ifx\@empty\contribs
  \else
    ,\penalty-3 \space \@setcontribs
    \@closetoccontribs
  \fi
  \normalfont
  \@setaddresses
  \endtrivlist
  \endgroup
}
\def\@setaddresses{\par
  \nobreak \begingroup\raggedright
  \small
  \def\author##1{\nobreak\addvspace\smallskipamount}%
  \def\\{\unskip, \ignorespaces}%
  \interlinepenalty\@M
  \def\address##1##2{\begingroup
    \par\addvspace\bigskipamount\noindent
    \@ifnotempty{##1}{(\ignorespaces##1\unskip) }%
    {\ignorespaces##2}\par\endgroup}%
  \def\curraddr##1##2{\begingroup
    \@ifnotempty{##2}{\nobreak\noindent\curraddrname
      \@ifnotempty{##1}{, \ignorespaces##1\unskip}\/:\space
      ##2\par}\endgroup}%
  \def\email##1##2{\begingroup
    \@ifnotempty{##2}{\smallskip\nobreak\noindent E-mail address%
      \@ifnotempty{##1}{, \ignorespaces##1\unskip}\/:\space
      \ttfamily##2\par}\endgroup}%
  \def\urladdr##1##2{\begingroup
    \def~{\char`\~}%
    \@ifnotempty{##2}{\nobreak\noindent\urladdrname
      \@ifnotempty{##1}{, \ignorespaces##1\unskip}\/:\space
      \ttfamily##2\par}\endgroup}%
  \addresses
  \endgroup
  \global\let\addresses=\@empty
}
\def\@setabstracta{%
    \ifvoid\abstractbox
  \else
    \skip@25\p@ \advance\skip@-\lastskip
    \advance\skip@-\baselineskip \vskip\skip@
    \box\abstractbox
    \prevdepth\z@ 
    \vskip-10pt
  \fi
}
\renewenvironment{abstract}{%
  \ifx\maketitle\relax
    \ClassWarning{\@classname}{Abstract should precede
      \protect\maketitle\space in AMS document classes; reported}%
  \fi
  \global\setbox\abstractbox=\vtop \bgroup
    \normalfont\small
    \list{}{\labelwidth\z@
      \leftmargin0pc \rightmargin\leftmargin
      \listparindent\normalparindent \itemindent\z@
      \parsep\z@ \@plus\p@
      
    }%
    \item[\hskip\labelsep\bfseries\abstractname.]%
}{%
  \endlist\egroup
  \ifx\@setabstract\relax \@setabstracta \fi
}
\def\section{\@startsection{section}{1}%
  \z@{-1.2\linespacing\@plus-.5\linespacing}{.8\linespacing}%
  {\normalfont\bfseries\large}}
\def\subsection{\@startsection{subsection}{2}%
  \z@{-.8\linespacing\@plus-.3\linespacing}{.3\linespacing\@plus.2\linespacing}%
  {\normalfont\bfseries}}
\def\subsubsection{\@startsection{subsubsection}{3}%
  \z@{.7\linespacing\@plus.1\linespacing}{-1.5ex}%
  {\normalfont\itshape}}
\def\@secnumfont{\bfseries}
\def\to{\mathchoice{\longrightarrow}{\rightarrow}{\rightarrow}{\rightarrow}}
\newcommand{\shortxra}[2][]{\ext@arrow 0359\rightarrowfill@{#1}{#2}}
\def\longrightarrowfill@{\arrowfill@\relbar\relbar\longrightarrow}
\newcommand{\longxra}[2][]{\ext@arrow 0359\longrightarrowfill@{#1}{#2}}
\renewcommand{\xrightarrow}[2][]{\mathchoice{\longxra[#1]{#2}}%
  {\shortxra[#1]{#2}}{\shortxra[#1]{#2}}{\shortxra[#1]{#2}}}
\def\otimesover#1{\mathbin{\mathop{\otimes}_{#1}}}
\def\Nopagebreak{\@nobreaktrue\nopagebreak}
\theoremstyle{plain}
\newtheorem{theorem}{Theorem}[section]
\newtheorem{proposition}[theorem]{Proposition}
\newtheorem{corollary}[theorem]{Corollary}
\newtheorem{lemma}[theorem]{Lemma}
\theoremstyle{definition}
\newtheorem{definition}[theorem]{Definition}
\newtheorem{example}[theorem]{Example}
\newtheorem{remark}[theorem]{Remark}
\def\Z{\mathbb{Z}}
\def\Q{\mathbb{Q}}
\def\R{\mathbb{R}}
\def\C{\mathbb{C}}
\def\N{\mathcal{N}}
\def\K{\mathcal{K}}
\def\cC{\mathcal{C}}
\def\cR{\mathcal{R}}
\def\cP{\mathcal{P}}
\def\I{\mathcal{I}}
\def\Ker{\operatorname{Ker}}
\def\Coker{\operatorname{Coker}}
\def\Im{\operatorname{Im}}
\def\Hom{\operatorname{Hom}}
\def\Tor{\operatorname{Tor}}
\def\sign{\operatorname{sign}}
\def\rank{\operatorname{rank}}
\def\inte{\operatorname{int}}
\def\Bl{B\ell}
\def\lk{\operatorname{lk}}
\def\rhot{\rho^{(2)}}
\def\dimt{\dim^{(2)}}
\let\ldim=\dimt
\def\bt{b^{(2)}}
\def\lsign{\sign^{(2)}}
\def\mathbinover#1#2{\mathbin{\mathop{#1}\limits_{#2}}}
\def\amalgover#1{\mathbinover{\amalg}{#1}}
\def\cupover#1{\mathbinover{\cup}{#1}}
\begin{document}

\title
[Symmetric Whitney tower cobordism] 
{Symmetric Whitney tower cobordism for bordered 3-manifolds and links}

\author{Jae Choon Cha}

\address{Department of Mathematics\\
  POSTECH \\
  Pohang 790--784\\
  Republic of Korea\\
  and\linebreak
  School of Mathematics\\
  Korea Institute for Advanced Study \\
  Seoul 130--722\\
  Republic of Korea}
\email{jccha@postech.ac.kr}

\def\subjclassname{\textup{2010} Mathematics Subject Classification}
\expandafter\let\csname subjclassname@1991\endcsname=\subjclassname
\expandafter\let\csname subjclassname@2000\endcsname=\subjclassname
\subjclass{%
  57M25, 
  57M27, 
  57N70. 
}

\keywords{Whitney towers, Gropes, Link concordance, Homology
  cobordism, Amenable $L^2$-signatures}

\begin{abstract}
  We introduce a notion of symmetric Whitney tower cobordism between
  bordered 3-manifolds, aiming at the study of homology cobordism and
  link concordance.  It is motivated by the symmetric Whitney tower
  approach to slicing knots and links initiated by T. Cochran, K. Orr,
  and P. Teichner.  We give amenable Cheeger-Gromov $\rho$-invariant
  obstructions to bordered 3-manifolds being Whitney tower cobordant.
  Our obstruction is related to and generalizes several prior known
  results, and also gives new interesting cases.  As an application,
  our method applied to link exteriors reveals new structures on
  (Whitney tower and grope) concordance between links with nonzero
  linking number, including the Hopf link.
\end{abstract}


\maketitle

\section{Introduction}

It is well known that \emph{Whitney towers} and \emph{gropes} play a
key r\^ole in several important problems in low dimensional topology,
particularly in the study of topology of 4-manifolds and concordance
of knots and links.  Whitney towers and gropes approximate embedded
2-disks, 2-spheres, and more generally embedded surfaces, in a
4-manifold.  Roughly, a Whitney tower can be viewed as (the trace of)
an attempt to apply Whitney moves repeatedly to remove intersection
points of immersed surfaces in dimension~4; it consists of various
layers of immersed Whitney disks which pair up intersection points of
prior layers.  A grope in a 4-manifold consists of embedded surfaces
with disjoint interiors which represent essential curves on prior
layer surfaces as commutators.

In this article we are interested in \emph{symmetric} Whitney towers
and gropes, which have a \emph{height}.  These are analogous to the
commutator construction of the \emph{derived series}.  We remark that
Whitney towers and gropes related to the \emph{lower central series}
are also often considered.  Although these Whitney towers and gropes
still give interesting structures concerning links and 4-dimensional
topology (for example, see the recent remarkable work of J. Conant,
R. Schneiderman, and P. Teichner surveyed
in~\cite{Conant-Teichner-Schneiderman:2011-1}), it is known that
symmetric Whitney towers and gropes are much closer approximations to
embedded surfaces that give extremely rich theory.

Our main aim is to study homology cobordism of 3-manifolds with
boundary using symmetric Whitney towers in dimension~4 and amenable
Cheeger-Gromov $\rho$-invariants.  Our setting is strongly motivated
by the symmetric Whitney tower approach to the knot (and link) slicing
problem which was first initiated by T. Cochran, K. Orr, and
P. Teichner~\cite{Cochran-Orr-Teichner:1999-1}, and the amenable
$L^2$-theoretic technique for the Cheeger-Gromov $\rho$-invariants due
to Orr and the author~\cite{Cha-Orr:2009-01}.  As a new application
that known Whitney tower frameworks do not cover, we study concordance
between links with nonzero linking number.  In particular we
investigate Whitney tower and grope concordance to the Hopf link.

\subsubsection*{Symmetric Whitney tower cobordism of bordered 3-manifolds}

First we introduce briefly how we adapt the Whitney tower approach in
\cite{Cochran-Orr-Teichner:1999-1} for homology cobordism of bordered
3-manifolds.  

Recall that a 3-manifold $M$ is \emph{bordered} by a surface $\Sigma$
if it is endowed with a marking homeomorphism of $\Sigma$ onto
$\partial M$.  For 3-manifolds $M$ and $M'$ bordered by the same
surface, one obtains a closed 3-manifold $M\cup_\partial -M'$ by
glueing the boundary along the marking homeomorphism.  A 4-manifold
$W$ is a \emph{relative cobordism} from $M$ to $M'$ if $\partial
W=M\cup_\partial -M'$.

A relative cobordism $W$ from $M$ to $M'$ is a \emph{homology
  cobordism} if the inclusions induce isomorphisms $H_*(M;\Z)\cong
H_*(W;\Z)\cong H_*(M';\Z)$.  Initiated by S. Cappell and
J. Shaneson~\cite{Cappell-Shaneson:1974-1}, understanding homology
cobordism of bordered manifolds is essential in the study of manifold
embeddings, in particular knot and link concordance.  This also
relates homology cobordism to other key problems including topological
surgery on 4-manifolds.
 
As a surgery theoretic Whitney tower approximation to a homology
cobordism, we will define the notion of a \emph{height $h$ Whitney tower
  cobordism} $W$ between bordered 3-manifolds ($h\in \frac12 \Z_{\ge
  0}$).  Roughly speaking, our height $h$ Whitney tower cobordism is a
relative cobordism between bordered 3-manifolds, which admits immersed
framed 2-spheres satisfying the following: while the 2-spheres may not
be embedded, these support a Whitney tower of height~$h$, and form a
``lagrangian'' in such a way that if the 2-spheres were homotopic to
embeddings then surgery along these would give a homology cobordism.
For a more precise description of a Whitney tower cobordism, see
Definition~\ref{definition:Whitney-tower-cobordism}.

It turns out that a height $h$ Whitney tower cobordism can be deformed
to another type of a relative cobordism satisfying a twisted homology
analogue of the above Whitney tower condition, which we call an
\emph{$h$-solvable cobordism}.  Roughly speaking, it is a cobordism
which induces an isomorphism on $H_1$ and admits a certain
``lagrangian'' and ``dual'' for the twisted intersection pairing
associated to (quotients by) derived subgroups of the fundamental
group.  See
Definition~\ref{definition:lagrangian-dual-solvable-cobordism} and
Theorem~\ref{theorem:whitney-tower-and-solvability} for details.  Our
$h$-solvable cobordism can be viewed as a relative version of the
notion of an $h$-solution first introduced
in~\cite{Cochran-Orr-Teichner:2002-1}.

\subsubsection*{Amenable signature theorem}

In order to detect the non-existence of a Whitney tower cobordism, we
show that certain \emph{amenable $L^2$-signatures}, or equivalently
\emph{Cheeger-Gromov $\rhot$-invariants}, give obstructions to
bordered 3-manifolds being height $h$ Whitney tower cobordant.
Interestingly, for the height $n.5$ obstructions stated below, we have
two alternative hypotheses on the first $L^2$-Betti numbers:
\emph{zero} or \emph{large enough}.  In what follows $\bt_i(-;\N G)$
and $b_i(-;R)$ denote the $L^2$-Betti number over $\N G$ and the Betti
number over~$R$, namely the $L^2$- and $R$-dimension of $H_i(-;\N G)$
and $H_i(-;R)$ respectively.

\newtheorem*{amenable-signature-theorem}
{Theorem~\ref{theorem:amenable-signature-for-solvable-cobordism}}

\begin{amenable-signature-theorem}[Amenable Signature Theorem for
  solvable cobordism]
  Suppose $W$ is a relative cobordism between two bordered 3-manifolds
  $M$ and~$M'$, $G$ is an amenable group lying in Strebel's class
  $D(R)$, $R=\Z/p$ or $\Q$, and $G^{(n+1)}=\{e\}$.  Suppose
  $\phi\colon \pi_1(M\cup_\partial -M') \to G$ factors through
  $\pi_1(W)$, and either one of the following conditions holds:
\begin{enumerate}
\item[(I)] $W$ is an $n.5$-solvable cobordism and $\bt_1(M;\N G)=0$;
\item[(II)] $W$ is an $n.5$-solvable cobordism,
  $|\phi(\pi_1(M))|=\infty$, and
  \[
  \bt_1(M\cup_\partial -M';\N G) \ge b_1(M;R)+b_2(M;R)+b_3(M;R)-1
  \text{; or}
  \]
\item[(III)] $W$ is an $(n+1)$-solvable cobordism.
\end{enumerate}
Then the Cheeger-Gromov invariant $\rhot(M\cup_\partial -M',\phi)$
vanishes.
\end{amenable-signature-theorem}

For the definition of amenable
groups and Strebel's class $D(R)$~\cite{Strebel:1974-1}, see
Definition~\ref{definition:amenability-strebel-D(R)}.  To prove
Amenable Signature
Theorem~\ref{theorem:amenable-signature-for-solvable-cobordism}, we
use extensively the $L^2$-theoretic techniques developed by Orr and
the author in~\cite{Cha-Orr:2009-01},~\cite{Cha:2010-01}.  For more
details and related discussions, see
Section~\ref{section:amenable-signature-theorem}.

Amenable Signature
Theorem~\ref{theorem:amenable-signature-for-solvable-cobordism}
generalizes several prior known cases (discussed in more detail in
Section~\ref{subsection:relationship-with-previous-results}).  First,
it specializes to the amenable signature obstructions to knots being
$n.5$-solvable given in \cite{Cha:2010-01}, and Cochran-Orr-Teichner's
PTFA signature obstructions \cite{Cochran-Orr-Teichner:1999-1}.  Also,
from our result it follows that Harvey's homology cobordism invariant
$\rho_n$ for closed 3-manifolds \cite{Harvey:2006-1} associated to her
torsion-free derived series is an obstruction to being Whitney tower
cobordant.

Moreover, Amenable Signature
Theorem~\ref{theorem:amenable-signature-for-solvable-cobordism} for
the condition (I) provides an interesting new case.  In
Section~\ref{subsection:vanishing-of-b^2_1} we discuss some instances
of bordered 3-manifolds for which the first $L^2$-Betti number
vanishes.  This will be used to give applications to links with
nonvanishing linking number, as described below.

\subsubsection*{Symmetric Whitney tower concordance of links}
 
Our setting for bordered 3-manifolds is useful in studying geometric
equivalence relations of links defined in terms of Whitney towers and
gropes.  We recall that two $m$-component links $L$ and $L'$ in $S^3$
are \emph{concordant} if there are $m$ disjointly embedded locally
flat annuli in $S^3\times[0,1]$ cobounded by components of $L\times0$
and $-L'\times 1$.  Again, approximating embedded annuli by Whitney
towers, one defines \emph{height $h$ (symmetric) Whitney tower
  concordance}: embedded annuli in the definition of concordance are
replaced with transverse immersed annuli which admit a Whitney tower
of height~$h$ (see
Definition~\ref{definition:whitney-tower-concordance}).  \emph{Height
  $h$ (symmetric) grope concordance} between links is defined
similarly, replacing disjoint annuli with disjoint height $h$ gropes
(see Definition~\ref{definition:grope-concordance}).

Schneiderman showed that if $L$ and $L'$ are height $h$ grope
concordant, then these are height $h$ Whitney tower
concordant~\cite{Schneiderman:2006-1}.  Furthermore, following the
lines of \cite{Cochran-Orr-Teichner:1999-1}, one can observe that if
two links are height $h+2$ Whitney tower concordant, then their
\emph{exteriors} are, as bordered 3-manifolds, height $h$ Whitney
tower cobordant (see
Theorem~\ref{theorem:link-whitney-tower-concobordism}).  Therefore
Amenable Signature
Theorem~\ref{theorem:amenable-signature-for-solvable-cobordism} gives
obstructions to links being Whitney tower (and grope) concordant.

Summarizing, we have the implications illustrated in
Figure~\ref{figure:whitney-tower-grope-amenable-signature}.

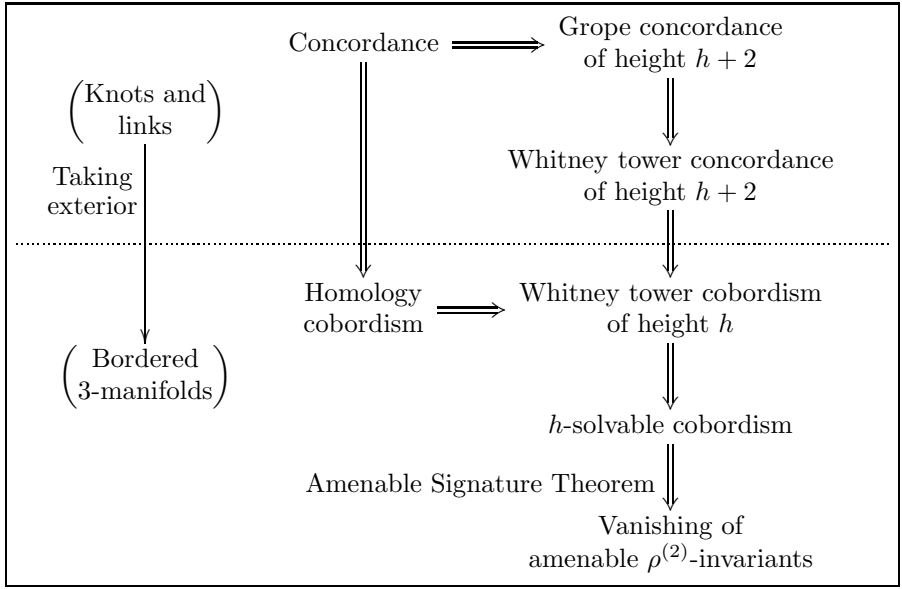
\begin{figure}[htb]
\fbox{\small$
\xymatrix@R=0em@C=1.5em@M=.05em{
&
&
{\begin{tabular}{c}Concordance\end{tabular}}
\ar@{=>}[r]
\ar@{=>}[dddd]
&
{\begin{tabular}{c}Grope concordance\\of height $h+2$\end{tabular}}
\ar@{=>}[dd]
\\
&
{\left(\begin{tabular}{@{}c@{}}Knots and \\links\end{tabular}\right)}
\ar[dddd]_(.3){\txt{Taking\\exterior}}
\\
&
&
&
{\begin{tabular}{c}Whitney tower concordance\\of height $h+2$\end{tabular}}
\ar@{=>}[dd]
\\
{\vphantom{\begin{array}{c}X\\X\end{array}}}\ar@{.}[rrrr]
&&&&{}
\\
&
&
{\begin{tabular}{c}Homology\\cobordism\end{tabular}}
\ar@{=>}[r]
&
{\begin{tabular}{c}Whitney tower cobordism\\of height $h$\end{tabular}}
\ar@{=>}[dd]
\\
&
{\left(\begin{tabular}{@{}c@{}}Bordered\\3-manifolds\end{tabular}\right)}
\\
&
&
&
{\begin{tabular}{c}$h$-solvable cobordism\end{tabular}}
\ar@{=>}[dd]_{\txt{Amenable Signature Theorem}\,\,}
\\
{\vphantom{\begin{array}{c}X\\X\end{array}}}
\\
&
&
&
{\begin{tabular}{c}Vanishing of\\amenable
  $\rhot$-invariants\end{tabular}}
}
$}
\caption{Whitney towers, gropes, and amenable signatures}
\label{figure:whitney-tower-grope-amenable-signature}
\end{figure}

\subsubsection*{Application to links with nonvanishing linking number}

As an application, we investigate concordance of links with
nonvanishing linking number, particularly concordance to the Hopf
link.

There are several known techniques to detect non-concordant links in
the literature, which reveal interesting structures peculiar to links
up to concordance (even modulo knots).  These include classical
abelian invariants such as Fox-Milnor type conditions for the
(reduced) multi-variable Alexander polynomial and Levine-Tristram type
signature invariants (e.g., see K. Murasugi~\cite{Murasugi:1965-1},
A. Tristram~\cite{Tristram:1969-1},
A. Kawauchi~\cite{Kawauchi:1978-1}), and the Witt class of the
Blanchfield pairing (e.g., see J. Hillman's
book~\cite{Hillman:2002-1}).  Also there are various signatures and
twisted torsion invariants associated to various nonabelian ``higher
order'' covers (e.g., see J. Levine
\cite{Levine:1994-1,Levine:2007-1}, Cha-Ko \cite{Cha-Ko:1999-1},
S. Friedl \cite{Friedl:2003-1}, S. Harvey \cite{Harvey:2006-1},
Cochran-Harvey-Leidy \cite{Cochran-Harvey-Leidy:2008-1},
Cha-Friedl~\cite{Cha-Friedl:2010-01}), and there are Witt-class valued
Hirzebruch-type invariants from iterated $p$-covers (see
\cite{Cha:2007-1,Cha:2007-2}).  These newer techniques are mainly
focused on link slicing problems.  Geometric techniques such as
covering link calculus and blowing down are also known to be useful
in several cases (e.g., see Cochran-Orr~\cite{Cochran-Orr:1993-1},
Cha-Ko~\cite{Cha-Ko:1999-2},
Cha-Livingston-Ruberman~\cite{Cha-Livingston-Ruberman:2006-1},
Cha-Kim~\cite{Cha-Kim:2008-1}).

Compared to the slicing problem, the more general case of concordance
between links with possibly nonvanishing linking number has been less
studied.  Note that the question of whether two given links are
concordant is not directly translated to a link slicing problem, while
for knots it can be done via connected sum.

The case of linking number one two component links has received some
recent attention.  It is remarkable that such links bear a certain
resemblance to knots, where the Hopf link is analogous to the unknot.
In this sense these may be viewed as ``small'' links.  For example,
while in many cases (e.g., for boundary links) there are large
nilpotent representations of the fundamental group of a link
complement which eventually lead us to interesting link concordance
invariants, it can be seen that there is no useful nonabelian
nilpotent representations for two component links with linking number
one, similarly to the knot case.

Classical abelian invariants such as the Alexander polynomial give
primary information for the linking number one case, and as a partial
converse J. Davis showed that any two-component link with Alexander
polynomial one is topologically concordant to the Hopf
link~\cite{Davis:2006-1}.  This result, which may be viewed as a link
analogue of the well-known result of Freedman on Alexander polynomial
one knots~\cite{Freedman:1984-1}, illustrates another similarity
between knots and ``small'' links.  We remark that recently T. Kim,
D. Ruberman, S. Strle, and the author have shown that Davis's result
does not hold in the smooth category, even for links with unknotted
components~\cite{Cha-Kim-Ruberman-Strle:2010-01}.

S. Friedl and M. Powell have recently developed Casson-Gordon style
metabelian link invariants by generalizing techniques for knots, and
detected interesting examples of links not concordant to the Hopf
link~\cite{Friedl-Powell:2010-1,Friedl-Powell:2011-1}.  They
conjectured that their invariant vanishes for links which are height
$3.5$ Whitney tower concordant to the Hopf link.  Recently M. Kim has
confirmed this conjecture, using our framework of Whitney tower
cobordism.  He has also related known abelian invariants to low height
Whitney tower concordance for linking number one links.

Our method reveals new sophisticated structures concerning linking
number one links not concordant to the Hopf link.  This may be viewed
as an analogue of the study of knots using higher order
$L^2$-invariants, beyond abelian and metabelian invariants.  An
advantage of our setup for the higher order invariants is that we can
use the \emph{exterior} of a link instead of the zero-surgery manifold
which is used in recent works on $L^2$-invariants for links.  We
remark that for two component links with linking number one, the
zero-surgery manifold is a homology 3-sphere and consequently has no
interesting solvable representations; this is a reason that
several recent techniques of higher order $L^2$-invariants do not
apply directly to this case.

Using our Amenable Signature
Theorem~\ref{theorem:amenable-signature-for-solvable-cobordism}
applied to link exteriors, we prove the following result:

\newtheorem*{hopf-link-theorem}
{Theorem~\ref{theorem:whitney-tower-concordance-to-hopf-link}}
\begin{hopf-link-theorem}
  For any integer $n>2$, there are links with two unknotted components
  which are height $n$ grope concordant (and consequently height $n$
  Whitney tower concordant) to the Hopf link, but not height $n.5$
  Whitney tower concordant (and consequently not height $n.5$ grope
  concordant) to the Hopf link.
\end{hopf-link-theorem}

We remark that more applications of our methods, including those on
homology cylinders, will be presented in other papers.

Results in this article hold in both topological category (with
locally flat submanifolds) and smooth category.  For a related
discussion, see Remark~\ref{remark:topological-vs-smooth}.  Manifolds
are assumed to be connected, compact, and oriented, and $H_*(-)$
denotes homology with integral coefficients unless stated otherwise.

\subsubsection*{Acknowledgements}
\label{subsubsection:acknowledgements}

The exposition of the paper was significantly improved by detailed
comments of an anonymous referee.  This research was supported in part
by NRF grants 2011--0030044, 2010--0029638, 2010--0011629 funded by
the government of Korea.

\section{Whitney tower cobordism}
\label{section:whitney-tower-and-concordance}

In this section we formulate a notion of symmetric Whitney tower
approximations of homology cobordism.  This gives a setup generalizing
the approach to the knot and link slicing problem initiated
in~\cite{Cochran-Orr-Teichner:1999-1}.  In what follows, to make the
exposition more readable, we discuss some motivations and backgrounds
as well.  Readers familiar with the notion of Whitney towers, gropes,
and the approach of~\cite{Cochran-Orr-Teichner:1999-1} may proceed to
the next section after reading only key definitions and statements of
our setup:
Definitions \ref{definition:0-lagrangian}
(0-lagrangian), \ref{definition:Whitney-tower-cobordism} (Whitney tower cobordism),
\ref{definition:lagrangian-dual-solvable-cobordism} ($h$-solvable
cobordism), Theorems \ref{theorem:whitney-tower-and-solvability}
(Whitney tower cobordism $\Rightarrow$ solvable cobordism),
\ref{theorem:link-whitney-tower-concobordism} (Whitney tower/grope
concordance $\Rightarrow$ Whitney tower cobordism).

\subsection{Homology cobordism and $H_1$-cobordism of bordered
  3-manifolds}

Recall that a relative cobordism $W$ between bordered 3-manifolds $M$
to $M'$ is a manifold with $\partial W=M\cup_\partial -M'$, and that
$W$ is a \emph{(relative) homology cobordism} if $H_*(M)\cong H_*(W)
\cong H_*(M')$ under the inclusion-induced maps.  As an abuse of
notation we often write $M\cup_\partial M'$ instead of $M\cup_\partial
-M'$.  Our primary example of a homology cobordism is obtained from
knots and links.

\begin{example}[Link exterior]
  If $L$ is a link in $S^3$, then the exterior $E_L=S^3-$ (open
  tubular neighborhood of $L$) is a 3-manifold bordered by the
  disjoint union of tori, where the marking is given canonically by
  the 0-linking framing of each component.

  If $L$ is concordant to $L'$, then the concordance exterior (with
  rounded corners) is a relative homology cobordism from $E_L$ to
  $E_{L'}$.  In fact this conclusion holds if $L$ is concordant to
  $L'$ in a homology $S^3\times[0,1]$.
\end{example}

The following is well-known and easily verified: two links $L$ and
$L'$ in $S^3$ are topologically concordant if and only if there is a
topological homology cobordism $W$ from $E_L$ to $E_{L'}$ with
$\pi_1(W)$ normally generated by meridians of~$L$.  The smooth
analogue holds modulo smooth Poincar\'e conjecture in dimension~4.
Also, two links $L$ and $L'$ in homology 3-spheres are
(topologically/smoothly) concordant in a (topological/smooth) homology
$S^3\times[0,1]$ if and only if their exteriors $E_L$ and $E_{L'}$ are
(topologically/smoothly) homology cobordant.

\subsubsection*{Relative $H_1$-cobordism of bordered 3-manifolds }


Suppose $M$ and $M'$ are 3-manifolds bordered by the same
surface~$\Sigma$.  As the first step toward a homology cobordism, one
considers the following, generalizing
\cite[Definition~8.1]{Cochran-Orr-Teichner:1999-1}.  (See also
\cite[Definition~2.1]{Cochran-Kim:2004-1}.)

\begin{definition}
  \label{definition:h1-cobordism}
  We say that a relative cobordism $W$ from $M$ to $M'$ is a
  \emph{relative $H_1$-cobordism} if $H_1(M)\cong H_1(W) \cong
  H_1(M')$ under the inclusion-induced maps.
\end{definition}

We will often say ``homology cobordism'' and ``$H_1$-cobordism'',
omitting the word ``relative'', when it is clear that these are
between bordered 3-manifolds from the context.  We remark that in many
cases a cobordism can be surgered, below the middle dimension, to an
$H_1$-cobordism.

The next step is to investigate whether one can eliminate $H_2(W,M)$
by doing surgery; for an $H_1$-cobordism $W$, it is easily seen that
$H_i(W,M)=0$ for $i\ne 2$ and $H_2(W,M)$ is a free abelian group onto
which $H_2(W)$ surjects.  For the convenience of the reader a proof is
given in Lemma~\ref{lemma:H_i(W,M)-for-H_1-cobordism} in
Section~\ref{subsection:basic-properties-H1-cobordism} below.

\begin{definition}
  \label{definition:0-lagrangian}
  Suppose $W$ is an $H_1$-cobordism between bordered 3-manifolds $M$
  and~$M'$.  A subgroup $L\subset H_2(W)$ is called a
  \emph{$0$-lagrangian} if $L$ projects onto a half-rank
  summand of $H_2(W,M)$ isomorphically and the intersection form
  $\lambda_0\colon H_2(W)\times H_2(W)\to \Z$ vanishes on $L\times L$.
\end{definition}

Definition~\ref{definition:0-lagrangian} which uses integral
coefficients is a precursor to the notion of an $n$-lagrangian, which
will be defined in the next subsection in terms of intersection forms
over twisted coefficients.

We remark that one can switch the r\^ole of $M$ and $M'$ in
Definition~\ref{definition:0-lagrangian} as expected, since using
Poincar\'e duality it can be seen that $L \subset H_2(W)$ projects
isomorphically onto a half-rank summand in $H_2(W,M)$ if and only if
$L$ does in $H_2(W,M')$.  We also remark that the following is a
standard fact, which is proven along the lines of the standard surgery
approach.  We give proofs of these two facts in
Section~\ref{subsection:basic-properties-H1-cobordism} below, for the
convenience of the reader.

\begin{proposition}
  \label{proposition:surgery-on-embedded-spherical-lagrangian}
  If an $H_1$-cobordism $W$ between bordered 3-manifolds $M$ and $M'$
  admits a 0-lagrangian $L$ generated by disjoint framed 2-spheres
  embedded in $W$, then $W$ can be surgered to a homology cobordism
  between $M$ and~$M'$.
\end{proposition}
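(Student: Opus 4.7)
The plan is the standard surgery argument. Let $S_1,\ldots,S_k\subset W$ be the disjoint framed embedded 2-spheres generating $L$, with disjoint framed tubular neighborhoods $N_i = S_i\times D^2$. Form $W'$ by replacing $N := \bigsqcup_i N_i$ with $\bigsqcup_i D^3\times S^1$; since the surgery takes place in the interior, $\partial W' = \partial W = M\cup_\partial -M'$. I would control the change in homology by a parallel Mayer-Vietoris comparison, writing $W$ and $W'$ each as the union of $W_0 := W\setminus\operatorname{int} N$ with a handlebody piece ($\bigsqcup_i S^2\times D^2$, respectively $\bigsqcup_i D^3\times S^1$) glued along $\bigsqcup_i S^2\times S^1$.

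The key step is to show that each meridian $\mu_i = \{\mathrm{pt}\}\times S^1\subset\partial N_i$ vanishes in $H_1(W_0)$ (equivalently, in $H_1(W')$). By excision, $H_2(W,W_0)\cong\Z^k$ is generated by the fiber disks $D_i = \{\mathrm{pt}\}\times D^2$ with $\partial D_i = \mu_i$, and the natural map $H_2(W)\to H_2(W,W_0)$ sends $[c]\mapsto ([c]\cdot[S_i])_i$. Thus the long exact sequence of $(W,W_0)$ reduces the claim to producing dual classes $[c_i]\in H_2(W)$ satisfying $[c_i]\cdot[S_j] = \delta_{ij}$. These are supplied by the $H_1$-cobordism hypothesis: Lemma~\ref{lemma:H_i(W,M)-for-H_1-cobordism} (applied with $M'$ in place of $M$) gives $H_1(W,M')=0$ and hence a surjection $H_2(W)\twoheadrightarrow H_2(W,M')$, while Poincar\'e-Lefschetz duality makes the pairing $H_2(W,M)\times H_2(W,M')\to\Z$ nonsingular on free parts. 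Since the $[S_i]$'s form a basis of a half-rank summand of $H_2(W,M)$ by the 0-lagrangian hypothesis, homological duals $[\tilde c_i]\in H_2(W,M')$ exist by linear algebra and lift to the required $[c_i]\in H_2(W)$.

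With $[\mu_i]=0$ in hand, a direct Mayer-Vietoris computation yields $H_1(W') = H_1(W_0) = H_1(W) = H_1(M)$. The analogous computation for $H_2$, together with the long exact sequence of $(W',M)$ and a rank-count using that $L$ is exactly half-rank in $H_2(W,M)$, gives $H_2(W',M)=0$; higher relative homology vanishes by Poincar\'e-Lefschetz duality, and symmetric reasoning handles $M'$. The main technical subtlety I foresee is the parallel bookkeeping of Mayer-Vietoris sequences and long exact sequences of pairs, so that the two distinct uses of the 0-lagrangian hypothesis --- producing $H_2(W)$-duals to the $[S_i]$, and matching the $H_2$ rank drop exactly --- both come out on the nose. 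I note that the condition $\lambda_0(L,L)=0$ in the definition of a 0-lagrangian is automatic in the present setting, since the $S_i$ are disjoint and framed.
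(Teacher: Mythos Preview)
Your proposal is correct and follows essentially the same route as the paper: construct $0$-duals $c_i$ to the $S_j$ via the Poincar\'e--Lefschetz pairing $H_2(W,M)\times H_2(W,M')\to\Z$ together with the surjection $H_2(W)\twoheadrightarrow H_2(W,M')$, then observe that surgery on the $S_i$ kills both the lagrangian and its duals. The paper packages the existence of duals into Lemma~\ref{lemma:langrangian-always-has-duals}(1), notes via part~(2) that $\{\ell_i,d_j\}$ is then a full basis of $H_2(W,M)$, and concludes in one line; your Mayer--Vietoris bookkeeping unwinds the same content more explicitly.
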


\subsection{Symmetric Whitney tower cobordism of bordered 3-manifolds}

As suggested in
Proposition~\ref{proposition:surgery-on-embedded-spherical-lagrangian}
above, one seeks disjointly embedded framed spheres generating a
0-lagrangian of an $H_1$-cobordism.  As approximations of embeddings,
we recall the notion of a symmetric Whitney tower.

\begin{definition}[{\cite[Definition~7.7]{Cochran-Orr-Teichner:1999-1}}]
  Suppose $S$ is a collection of transverse framed surfaces immersed
  in a 4-manifold~$W$.

  \begin{enumerate}
  \item A \emph{symmetric Whitney tower of height $n$} based on $S$ is
    a sequence $\cC_0,\ldots,\cC_n$ such that $\cC_0 = S$, and for
    $k=1,\ldots,n$, $\cC_{k}$ is a collection of transverse framed
    immersed Whitney disks that pair up all the intersection points of
    $\cC_{k-1}$ and have interior disjoint to surfaces in $\cC_0\cup
    \cdots\cup \cC_{k-1}$.

  \item A \emph{symmetric Whitney tower of height $n.5$} based on $S$
    is a sequence of collections $\cC_0,\ldots,\cC_n$, $\cC_{n+1}$
    such that $\cC_0,\ldots,\cC_{n+1}$ satisfy the defining condition
    of a Whitney tower of height $n+1$ except that the interior of
    $\cC_{n+1}$ is allowed to meet $\cC_n$, while it is still required
    to be disjoint to $\cC_0\cup \cdots\cup \cC_{n-1}$.
  \end{enumerate}
  We call $\cC_k$ the \emph{$k$th stage}, and Whitney disks in $\cC_k$
  are said to be of \emph{height~$k$}.
\end{definition}

Here, intersection points of $\cC_k$ designate both self-intersections
of a surface in $\cC_k$ and intersections of distinct surfaces.  (We
remark that we may assume that no Whitney disk has self-intersections
by ``Whitney tower splitting''
\cite[Section~3.7]{Schneiderman:2006-1}.)  We always assume that
Whitney towers are framed in the sense that for each Whitney disk $D$
that pairs intersections of two sheets, the unique framing on $D$
gives rise to the Whitney section on $\partial D$, which is defined to
be the push-off of $\partial D$ along the tangential direction of one
sheet and along the normal direction of another sheet (avoiding the
tangential direction of~$D$).  As a reference, for example, see
\cite[Section~2.2]{Conant-Teichner-Schneiderman:2012-1} and
\cite[p.~54]{Scorpan:2005-1}.

We remark that if a collection of framed immersed 2-spheres $S_i$
admits a Whitney tower of height $>0$, then it is easily seen that
both the intersection number $\lambda(S_i,S_j)\in \Z[\pi_1(W)]$ and
the self-intersection number $\mu(S_i)\in \Z[\pi_1(W)]/\langle g-\bar
g\rangle$ vanish for all $i$ and~$j$.  (See
\cite[Section~1.7]{Freedman-Quinn:1990-1} for the definition of
$\lambda$ and~$\mu$.)  Consequently the untwisted intersection
$\lambda_0$ automatically vanishes on the~$S_i$.  Also, the converse
is true:

\begin{lemma}
  \label{lemma:intersection-and-0.5-whitney-tower}
  A collection of framed immersed 2-spheres $S_i$ admits a Whitney
  tower of height $0.5$ if and only if $\lambda(S_i,S_j)=0$ and
  $\mu(S_i)=0$ for all $i$ and~$j$.
\end{lemma}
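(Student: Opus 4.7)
The plan is to prove both implications by standard arguments relating the algebraic intersection/self-intersection numbers to the geometric pairing provided by Whitney disks.

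For the forward direction, assume $\mathcal{C}_0=\{S_i\}$ and $\mathcal{C}_1$ form a height $0.5$ Whitney tower. Each disk $D\in\mathcal{C}_1$ pairs two transverse double points of $\mathcal{C}_0$ lying on a pair of sheets $S_a, S_b$ (possibly $a=b$). By the definitions recalled in \cite[Section~1.7]{Freedman-Quinn:1990-1}, each double point contributes a signed group element to $\lambda(S_a,S_b)$ when $a\ne b$, or to $\mu(S_a)$ when $a=b$. The Whitney arcs on $\partial D$ running along the two sheets and the image of $D$ itself show that the two double points are joined by a loop which is null-homotopic in $W$, so both contribute the same $g\in\pi_1(W)$; the local orientation condition built into the definition of a Whitney disk forces the two signs to be opposite. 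Summing over all pairs gives $\lambda(S_i,S_j)=0$ for $i\ne j$ and $\mu(S_i)=0$.

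For the reverse direction, assume $\lambda(S_i,S_j)=0$ and $\mu(S_i)=0$. Then for each unordered pair $(i,j)$ the double points in $S_i\cap S_j$ (respectively the self-double points of $S_i$) can be matched in pairs whose signs are opposite and whose associated elements of $\pi_1(W)$ are equal. For each such pair, the equality of group elements produces an embedded Whitney circle bounding a null-homotopy in $W$, and a generic extension of this null-homotopy is an immersed Whitney disk pairing the two double points. After a small perturbation I obtain a collection $\mathcal{C}_1$ of transverse immersed Whitney disks pairing all the double points of $\mathcal{C}_0$.

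The only remaining issue, and the main technical point, is the framing requirement imposed on Whitney towers in the paper. A generic Whitney disk constructed above need not have the correct framing, but the framing obstruction is an integer that can be killed one unit at a time by boundary twists. Each boundary twist alters the framing by one at the cost of introducing a single transverse intersection of the Whitney disk with one of the base sheets in $\mathcal{C}_0$. In a Whitney tower of height exactly~$1$ these new intersections would be forbidden, but the definition of height $0.5$ permits $\mathcal{C}_1$ to meet $\mathcal{C}_0$ (only disjointness from $\mathcal{C}_0\cup\cdots\cup\mathcal{C}_{n-1}$ is required, which is vacuous for $n=0$). Thus the boundary twists repair the framing inside the allowed class of Whitney towers, and the resulting $\mathcal{C}_0,\mathcal{C}_1$ is a framed height $0.5$ Whitney tower based on $\{S_i\}$.
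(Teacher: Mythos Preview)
Your proof is correct and follows essentially the same route as the paper: the forward direction is the standard observation (which the paper simply refers to as ``discussed above''), and for the converse both you and the paper pair up the double points using the vanishing of $\lambda$ and $\mu$, obtain null-homotopic Whitney circles, produce immersed Whitney disks (the paper cites the Immersion Lemma of Freedman--Quinn where you say ``generic extension''), and then correct the framing by boundary twisting, noting that the resulting intersections of $\mathcal{C}_1$ with $\mathcal{C}_0$ are permitted at height~$0.5$.
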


\begin{proof}
  The only if direction has been discussed above.  Conversely, if
  $\lambda$ and $\mu$ vanish on the $S_i$, then the intersection
  points of the $S_i$ can be paired up in such a way that a Whitney
  circle for each pair is null-homotopic.  Applying the Immersion
  Lemma in \cite[p.~13]{Freedman-Quinn:1990-1} and the standard
  boundary twisting operation \cite[p.~16]{Freedman-Quinn:1990-1}, one
  obtains immersed framed Whitney disks, which form a Whitney tower of
  height~$0.5$.
\end{proof}

\begin{definition}
  \label{definition:Whitney-tower-cobordism}
  Suppose $W$ is an $H_1$-cobordism.
  \begin{enumerate}
  \item We say that a submodule $L\subset
    H_2(W;\Z[\pi_1(W)])\cong\pi_2(W)$ is a \emph{framed spherical
      lagrangian} if $L$ projects onto a $0$-lagrangian in $H_2(W)$
    and generated by framed immersed 2-spheres for which $\lambda$
    and~$\mu$ vanish.
  \item We say that $W$ is a \emph{height $h$ Whitney tower cobordism}
    if there is a framed spherical lagrangian generated by framed
    immersed 2-spheres admitting a Whitney tower of height~$h$.  If
    there exists such $W$, we say that $M$ is \emph{height $n.5$
      Whitney tower cobordant} to~$M'$.
  \end{enumerate}
\end{definition}

From Lemma~\ref{lemma:intersection-and-0.5-whitney-tower}, the
following is immediate: there is a framed spherical lagrangian if and
only if there exist immersed 2-spheres that generate a 0-lagrangian in
$H_2(W)$ and support a Whitney tower of height~$0.5$.

\subsection{Solvable cobordism of bordered 3-manifolds}

Following the ideas of \cite[Definitions 8.5, 8.7 and Theorems 8.6,
8.8]{Cochran-Orr-Teichner:1999-1}, we relate Whitney towers to
lagrangians admitting \emph{duals}.  Later this will enable us to
obtain amenable $L^2$-signature invariant obstructions.  Our
definition below, which is for bordered 3-manifolds, is also similar
to the notion of $h$-cylinders considered by Cochran and Kim for
\emph{closed} 3-manifolds with first Betti number one
\cite[Definition~2.1]{Cochran-Kim:2004-1}.

We fix some notation.  For a group $G$, $G^{(n)}$ denotes the $n$th
derived subgroup defined by $G^{(0)}=G$, $G^{(n+1)} = [G^{(n)},
G^{(n)}]$.
For a 4-manifold $W$ with $\pi=\pi_1(W)$, let
\[
\lambda_n\colon H_2(W;\Z[\pi/\pi^{(n)}])\times H_2(W;\Z[\pi/\pi^{(n)}]) \to
\Z[\pi/\pi^{(n)}]
\]
be the intersection form.  We say that a closed surface immersed in
$W$ is an \emph{$n$-surface} if it represents an element in
$H_2(W;\Z[\pi/\pi^{(n)}])$, namely it lifts to the regular cover of
$W$ with fundamental group~$\pi^{(n)}$.

\begin{definition}
  \label{definition:lagrangian-dual-solvable-cobordism}
  Suppose $W$ is an $H_1$-cobordism between bordered 3-manifolds $M$
  and~$M'$ with $\pi=\pi_1(W)$. Let $m=\frac 12 \rank H_2(W,M)$.
  \begin{enumerate}
  \item A submodule $L\subset H_2(W;\Z[\pi/\pi^{(n)}])$ is an
    \emph{$n$-lagrangian} if $L$ projects onto a $0$-lagrangian for
    $H_2(W,M)$ and $\lambda_n$ vanishes on~$L$.
  \item For an $n$-lagrangian ($n\ge k$) or a framed spherical
    lagrangian $L$, homology classes $d_1,\ldots,d_m\in
    H_2(W;\Z[\pi/\pi^{(k)}])$ are \emph{$k$-duals} of $L$ if $L$ is
    generated by $\ell_1,\ldots,\ell_m\in L$ whose projections $\ell'_i
    \in H_2(W;\Z[\pi/\pi^{(k)}])$ satisfy
    $\lambda_k(\ell'_i,d_j)=\delta_{ij}$.
  \item The 4-manifold $W$ is an \emph{$n.5$-solvable cobordism}
    (resp.\ \emph{$n$-solvable cobordism}) if it has an
    $(n+1)$-lagrangian (resp.\ $n$-lagrangian) with $n$-duals.  If
    there exists an $h$-solvable cobordism from $M$ to $M'$, we say
    that $M$ is \emph{$h$-solvably cobordant} to~$M'$.
  \end{enumerate}
\end{definition}

We remark that the notion of an $h$-solvable cobordism is a relative analogue of an
$h$-solution introduced in \cite{Cochran-Orr-Teichner:1999-1}, as
mentioned in the introduction.

\begin{theorem}
  \label{theorem:whitney-tower-and-solvability}
  Suppose $M$ and $M'$ are bordered 3-manifolds.  Then for the
  following statements, $(1) \Rightarrow (2) \Rightarrow (3)$ holds:
  \begin{enumerate}
  \item $M$ and $M'$ are height $n.5$ Whitney tower cobordant.
  \item There is an $H_1$-cobordism between $M$ and $M'$ which has a
    framed spherical lagrangian admitting $n$-duals.
  \item $M$ and $M'$ are $n.5$-solvably cobordant.
  \end{enumerate}
\end{theorem}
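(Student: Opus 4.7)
My plan is to handle $(2)\Rightarrow(3)$ by a formal coefficient-change argument, and to devote the main effort to $(1)\Rightarrow(2)$, where the geometric content resides. The latter essentially adapts the Whitney tower technique of Cochran--Orr--Teichner~\cite{Cochran-Orr-Teichner:1999-1} to the bordered setting.

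For $(2)\Rightarrow(3)$, set $\pi=\pi_1(W)$ and use the identification $\pi_2(W)\cong H_2(W;\Z[\pi])$. The coefficient reduction $\Z[\pi]\to\Z[\pi/\pi^{(n+1)}]$ induces a map $\pi_2(W)\to H_2(W;\Z[\pi/\pi^{(n+1)}])$; let $L'$ denote the image of the given framed spherical lagrangian $L$. Composing further down to $\Z$-coefficients recovers the Hurewicz projection, so $L'$ projects onto the same $0$-lagrangian in $H_2(W,M)$ as $L$. The form $\lambda_{n+1}$ on $L'$ is the reduction of the equivariant $\lambda$ and $\mu$ of the generators $S_i$ from $\Z[\pi]$ to $\Z[\pi/\pi^{(n+1)}]$, and since both already vanish for a framed spherical lagrangian, so does $\lambda_{n+1}$, making $L'$ an $(n+1)$-lagrangian. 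The given $n$-duals $d_j\in H_2(W;\Z[\pi/\pi^{(n)}])$ serve equally as $n$-duals for $L'$: the projection $\ell'_i$ of each generator of $L'$ into $H_2(W;\Z[\pi/\pi^{(n)}])$ agrees with the direct reduction of $S_i$ there, so $\lambda_n(\ell'_i,d_j)=\delta_{ij}$ is inherited.

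For $(1)\Rightarrow(2)$, the goal is to produce $n$-duals for the spherical lagrangian $L$ generated by $S_1,\ldots,S_m$ supporting the height $n.5$ Whitney tower $\cC_0\cup\cdots\cup\cC_{n+1}$. First I would produce ordinary $0$-duals. By Lemma~\ref{lemma:H_i(W,M)-for-H_1-cobordism}, $H_2(W,M)\cong\Z^{2m}$ and $H_2(W)\twoheadrightarrow H_2(W,M)$; Poincar\'e--Lefschetz duality realizes the intersection pairing with $H_2(W,M')$ as unimodular, and, combined with the symmetric r\^ole of $M$ and $M'$ remarked after Definition~\ref{definition:0-lagrangian}, yields a half-rank complement to $\{[S_i]\}$ in $H_2(W,M)$. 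Lifting its generators through the surjection and representing them by generically immersed closed surfaces $T_1,\ldots,T_m$ in the interior of $W$ gives $\lambda_0(S_i,T_j)=\delta_{ij}$. Next I would modify the $T_j$ into $n$-duals using the Whitney tower. Writing $\lambda_n(S_i,T_j)=\sum_{p\in S_i\cap T_j}\epsilon_p g_p\in\Z[\pi/\pi^{(n)}]$, it suffices to arrange that every $g_p$ lies in $\pi^{(n)}$, whereupon $\lambda_n(S_i,T_j)$ collapses to $\sum_p\epsilon_p=\delta_{ij}$. After placing $T_j$ in general position with respect to the tower, I apply a cascade of Whitney-type push-throughs: each intersection of $T_j$ with a stage-$\cC_k$ disk is pushed across that disk, trading it for intersections with stage-$\cC_{k+1}$ disks while shifting the associated group element by a commutator in $\pi^{(k)}$. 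Running the cascade through $\cC_1,\ldots,\cC_n$ drives all error group elements into deeper derived subgroups, and the half-stage $\cC_{n+1}$---permitted to intersect $\cC_n$ but disjoint from $\cC_0\cup\cdots\cup\cC_{n-1}$---supplies the final flexibility to land the remaining errors in $\pi^{(n)}$ without disturbing the lower-stage cancellations.

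The principal obstacle will be the combinatorial bookkeeping of this cascade: tracking how each push perturbs group elements, verifying that newly created intersections land one derived step deeper, and confirming that the half-stage $\cC_{n+1}$ delivers precisely $n.5$-level precision---neither only $(n-1)$-duals nor requiring a full height-$(n+1)$ tower. This half-integer accounting is the geometric heart of the argument, mirroring the $n.5$-versus-$n$ distinction in~\cite{Cochran-Orr-Teichner:1999-1}. All remaining ingredients---general position, the framings of the Whitney disks built into our definition, and compatibility with the bordered boundary---are routine adaptations of the closed-manifold techniques.
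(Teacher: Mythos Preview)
Your argument for $(2)\Rightarrow(3)$ is fine and matches the paper's one-line observation that a framed spherical lagrangian is automatically an $(n+1)$-lagrangian.

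The argument for $(1)\Rightarrow(2)$ has a genuine gap. First, $n$-duals are by definition classes in $H_2(W;\Z[\pi/\pi^{(n)}])$, i.e.\ they must be represented by $n$-surfaces (surfaces lifting to the $\pi^{(n)}$-cover). Your $T_j$ are generic immersed surfaces representing classes in $H_2(W;\Z)$; there is no reason these lift along $H_2(W;\Z[\pi/\pi^{(n)}])\to H_2(W;\Z)$, and you never address this. Arranging the group elements $g_p$ at points of $S_i\cap T_j$ to lie in $\pi^{(n)}$ is a separate issue from making $T_j$ itself an $n$-surface. Second, the ``cascade of push-throughs'' is not a well-defined operation in the direction you describe. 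The tower $\cC_1,\ldots,\cC_{n+1}$ pairs intersections \emph{among the $S_i$}; it says nothing about $S_i\cap T_j$. The standard operation (Freedman--Quinn pushing down) trades an intersection of $T_j$ with a stage-$k$ disk for two intersections with stage-$(k-1)$ surfaces, the opposite direction from what you claim, and in any case it does not alter the group elements at $S_i\cap T_j$.

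The paper's proof proceeds quite differently: it does \emph{not} keep $W$ fixed. Instead it runs an induction on pairs $(h,r)$, starting at $(n.5,0)$ (using that a $0$-lagrangian always has $0$-duals), and at each step replaces $W$ by a new $H_1$-cobordism $W'$ obtained by surgery. Concretely: push the $r$-duals $d_j$ down so they meet only stage $0$, tube so $\ell_i\cdot d_j=\delta_{ij}$ geometrically, take linking tori $T_k$ at the stage-$1$ Whitney circles, then surger $W$ along pushoffs of those circles and along the resulting embedded $\ell_i$. In $W'$ the new spherical lagrangian is formed from the stage-$1$ Whitney disks capped off by the surgery disks (supporting a tower of height $h-1$), and the $T_k$ are the new duals. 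The crucial point is that the basis curves of $T_k$ are meridians of the $\ell_i$, which bound on the $r$-dual surfaces $d_j$ and hence lie in $[\pi^{(r)},\pi^{(r)}]=\pi^{(r+1)}$; this is what makes the $T_k$ genuine $(r+1)$-surfaces. Iterating $n$ times lands at $(0.5,n)$, giving a framed spherical lagrangian with $n$-duals.
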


\begin{proof}
  First, $(2)$ implies $(3)$ since a framed spherical lagrangian is an
  $(n+1)$-lagrangian.  The implication $(1) \Rightarrow (2)$ is proven
  by an argument similar to \cite[Proof of
  Theorem~8.4]{Cochran-Orr-Teichner:1999-1} (see the part entitled ``
  the induction step $r \mapsto r-1$'').  The point is that for our
  purpose we do not need the assumption of
  \cite{Cochran-Orr-Teichner:1999-1} that the concerned 4-manifolds
  are spin.  We give details for concreteness and for the reader's
  convenience.

  Suppose $W$ is an $H_1$-cobordism from $M$ to $M'$, and a spherical
  lagrangian for $W$ is generated by framed immersed 2-spheres
  $\ell_i$ which support a Whitney tower of height $h$ and admit
  $r$-duals $d_j$.  We will show, if $h\ge 1.5$, there is an
  $H_1$-cobordism from $M$ to $M'$ with a spherical lagrangian
  generated by framed immersed 2-spheres which support a Whitney tower
  of height $h-1$ and admit $(r+1)$-duals.  From this our conclusion
  follows by an induction on $r$ starting from $(h,r)=(n.5,0)$; one
  can start the induction since a spherical lagrangian always admits
  $0$-duals by Lemma~\ref{lemma:langrangian-always-has-duals} stated
  and proved later (see
  Section~\ref{subsection:basic-properties-H1-cobordism}).

  The claim is proven as follows.  Let $\cC$ be the given Whitney
  tower of height~$h$.  By pushing down intersections as in
  \cite[Section~2.5]{Freedman-Quinn:1990-1} we may assume that each
  $d_j$ does not meet height $>0$ part of $\cC$.  By tubing if
  necessary, one may assume the geometric intersection of $\ell_i$ and
  $d_j$ is precisely~$\delta_{ij}$.  Denote the collection of the
  Whitney circles pairing intersections of the $\ell_i$ by
  $\{\alpha_k\}$, and let $\Delta_k$ be the height $1$ Whitney disk
  bounded by~$\alpha_k$.  Choose one of the two intersection points
  lying on $\alpha_k$, and around it, choose a linking torus~$T_k$
  (see \cite[p.~12]{Freedman-Quinn:1990-1}) which is disjoint from the
  $\ell_i$ and~$d_j$.  We may assume that $T_k$ intersects $\cC$ at a
  single point on~$\Delta_k$.  Let $x_k$ and $y_k$ be the standard
  basis curves on $T_k$ based at $T_k\cap\cC$.  Since $x_k$ and $y_k$
  are meridians of some of the $\ell_i$, these are conjugate to
  elements of~$\pi_1(d_i)^{(1)}$.  Since $\pi_1(d_i) \subset
  \pi_1(W)^{(r)}$, it follows that $T_k$ is an $(r+1)$-surface in~$W$.

  Now do surgery on $W$ along pushoffs of the $\alpha_k$ taken along
  the $\Delta_k$ direction to a new 4-manifold, in which we have
  framed embedded 2-disks $b_k$ bounded by~$\alpha_k$.  By Whitney
  moves along the $b_k$, isotope the $\ell_i$ to disjointly embedded
  framed 2-spheres.  Doing surgery along these 2-spheres, we obtain
  another new 4-manifold, say~$W'$.  The framed immersed 2-spheres
  $\ell_k' := \Delta_k \cup_\partial b_k$ together with height $\ge 2$
  Whitney disks of $\cC$ form a Whitney tower of height~$h-1$.  Since
  $h-1\ge 0.5$, the intersection $\lambda$ and self-intersection $\mu$
  vanish on the~$\ell_k'$.  Direct computation of the rank of $H_2$
  shows that the $\ell_k'$ form a framed spherical lagrangian
  for~$W'$.  Since the geometric intersection of $\ell_k'$ and $T_l$
  is precisely $\delta_{kl}$, the $T_k$ are $(r+1)$-duals.
\end{proof}
  
\begin{remark}
  \label{remark:comparison-with-COT}
  In \cite{Cochran-Orr-Teichner:1999-1} they make an additional
  assumption that the concerned 4-manifolds are spin.  If one adds the
  similar spin condition and self-intersection vanishing condition to
  our definitions, then the arguments in
  \cite{Cochran-Orr-Teichner:1999-1} can be carried out to show that
  all the statements (1), (2), and (3) in
  Theorem~\ref{theorem:whitney-tower-and-solvability} are equivalent.
  A key technical point is that the spin assumption implies that
  $k$-duals are represented by surfaces which are automatically
  \emph{framed}.
\end{remark}

\begin{remark}
  \label{remark:whitney-tower-and-solvability-n-case}
  One can also show the following: \emph{if $M$ and $M'$ are height
    $n$ Whitney tower cobordant, then $M$ and $M'$ are $n$-solvably
    cobordant.}  Indeed, applying the induction as in the above proof,
  one obtains a spherical lagrangian supporting a height one Whitney
  tower together with $(n-1)$-duals.  Applying the induction argument
  once more, one now obtains framed immersed spheres $\ell_k'$ and the
  tori $T_k$ which are $n$-surfaces, but now the $\ell_k$ may have
  non-vanishing intersection $\lambda$.  Nonetheless, since the tori $T_k$
  are mutually disjoint, one sees that the $T_k$ form an
  $n$-lagrangian and the $\ell_k$ are their $n$-duals.
\end{remark}

\subsection{Symmetric Whitney tower concordance and grope concordance of links}

Recall that two $m$-component links $L$ and $L'$ in $S^3$ are
\emph{concordant} if there is a collection of $m$ disjoint cylinders
properly embedded in $S^3\times [0,1]$ joining the corresponding
components of $L\times 0$ and $-L'\times 1$.  We always assume links
are ordered.

It is natural to think of immersed cylinders supporting Whitney
towers, as an approximation of honest concordance.

\begin{definition}
  \label{definition:whitney-tower-concordance}
  Two $m$-component links $L$ and $L'$ in $S^3$ are \emph{height $h$
    (symmetric) Whitney tower concordant} if there is a collection of
  transverse framed cylinders $C_i$ ($i=1,\ldots,m$) immersed in
  $S^3\times [0,1]$ which joins the 0-framed $i$th components of
  $-L\times 0$ and $L'\times 1$, and there is a Whitney tower of
  height $h-1$ based on the~$C_i$.
\end{definition}

Note that ``height $h-1$'' is not a typo.  This is because the
following convention: the immersed annuli $C_i$ are said to be the
height one part of the Whitney tower concordance.  ($-L\times 0 \cup
L'\times 1$ is said to be the height zero part; see also
\cite[Definition~7.7]{Cochran-Orr-Teichner:1999-1}.)

The following is a Whitney tower analogue of the fact that the
exteriors of concordant links are, as bordered 3-manifolds, relatively
homology cobordant.

\begin{theorem}
  \label{theorem:link-whitney-tower-concobordism}
  If two links are height $h+2$ Whitney tower concordant, then their
  exteriors are height $h$ Whitney tower cobordant, as bordered
  3-manifolds.
\end{theorem}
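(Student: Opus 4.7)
My approach follows a Cochran-Orr-Teichner style construction (compare the proof of Theorem~8.4 in~\cite{Cochran-Orr-Teichner:1999-1}, which treats slice disks), adapted to link concordance and link exteriors. Let $C_i\subset S^3\times[0,1]$ $(i=1,\ldots,m)$ be the transverse framed immersed annuli realizing the height $h+2$ Whitney tower concordance; by definition they support a Whitney tower $\cC_0,\cC_1,\ldots,\cC_{h+1}$ with $\cC_0=\{C_i\}$. The plan is to build the cobordism $W$ as the exterior of the $C_i$, produce 2-spheres in $W$ from the level-1 Whitney disks in $\cC_1$, and use the higher stages $\cC_2,\ldots,\cC_{h+1}$ to equip these spheres with a height $h$ Whitney tower.

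To construct the cobordism, let $\nu$ be a closed regular neighborhood of $\bigcup_i C_i$ in $S^3\times[0,1]$, taken so that near $L\times 0$ and $L'\times 1$ it restricts to the standard zero-framed tubular neighborhoods (giving the required bordering of $E_L$ and $E_{L'}$), and near each transverse double point of $\cC_0$ it is the standard plumbing of two disk bundles. Set $W=(S^3\times[0,1])\setminus\inte(\nu)$. Then $\partial W=E_L\cup_\partial -E_{L'}$ with the correct markings. A Mayer-Vietoris argument for $(S^3\times[0,1])=W\cup\nu$ shows that $H_1(W)\cong\Z^m$ is generated by meridians of the $C_i$ and that the inclusions $E_L,E_{L'}\hookrightarrow W$ each induce $H_1$-isomorphisms, so $W$ is an $H_1$-cobordism between $E_L$ and $E_{L'}$.

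For each $\Delta\in\cC_1$ pairing double points $p,q$ of $\cC_0$, I construct a framed immersed 2-sphere $\Sigma_\Delta\subset W$ as follows. Push $\Delta$ off the two sheets it bounds on, using its Whitney framing, to obtain a disk $\Delta^+\subset W$ with $\partial\Delta^+\subset\partial\nu$; cap $\partial\Delta^+$ by the canonical disk in $\partial\nu$ assembled from half-meridional arcs at $p$ and $q$ (a ``Clifford cap''). The resulting sphere $\Sigma_\Delta$ is framed because $\Delta$ is, and the higher stages $\cC_{k+1}$ of the original tower descend, via the same push-off, to give the $k$-th stage of Whitney disks for the $\Sigma_\Delta$ for $k=1,\ldots,h$, yielding a height $h$ Whitney tower on $\{\Sigma_\Delta\}$; here one uses that $\cC_{k+1}$ has interior disjoint from $\cC_0$, hence from the Clifford caps. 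Lemma~\ref{lemma:intersection-and-0.5-whitney-tower} then ensures $\lambda(\Sigma_\Delta,\Sigma_{\Delta'})=0$ and $\mu(\Sigma_\Delta)=0$. Finally, a rank count using the contribution of each double point of $\cC_0$ to $H_2(W)$ and $H_2(W,E_L)$ shows that the classes $[\Sigma_\Delta]$ project onto a half-rank summand of $H_2(W,E_L)$, forming the desired framed spherical lagrangian.

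The principal obstacle is the careful local analysis at each transverse double point of $\cC_0$. One must verify: (i) the $H_1$- and $H_2$-calculations identifying $H_2(W,E_L)$ with a free abelian group of the correct rank, so that the $[\Sigma_\Delta]$ do assemble into a $0$-lagrangian; (ii) that the Whitney framing on $\Delta$ transports through the Clifford cap to give a well-defined framing on $\Sigma_\Delta$; and (iii) that the capping step introduces no extraneous intersections among the $\Sigma_\Delta$ beyond those inherited from $\cC_1$, so that $\cC_2,\ldots,\cC_{h+1}$ really furnish the claimed Whitney tower on the spheres. These are all standard local-model computations at a pair of transverse 2-disks in $D^4$, and assembling them globally yields the desired height $h$ Whitney tower cobordism.
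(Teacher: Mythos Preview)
Your approach is correct and follows essentially the same construction the paper intends, which simply defers to \cite[Theorem~8.12]{Cochran-Orr-Teichner:1999-1} without spelling out details. One minor note: the precise reference is Theorem~8.12 rather than Theorem~8.4 of~\cite{Cochran-Orr-Teichner:1999-1}; the latter is the induction step from a Whitney tower cobordism to a solvable cobordism (used in the paper's Theorem~\ref{theorem:whitney-tower-and-solvability}), whereas 8.12 is the passage from a Whitney tower on the concordance annuli to the bounding $4$-manifold with spherical lagrangian that you outline.
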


The proof is parallel to that of
\cite[Theorem~8.12]{Cochran-Orr-Teichner:1999-1}.  Details are
omitted.


Another well-known notion generalizing link concordance is grope
concordance.  We consider \emph{symmetric} gropes only,
which have a \emph{height}.  For the reader's convenience we give
definitions below.

\begin{definition}
  \label{definition:grope}
  Let $n$ be a nonnegative integer.  A \emph{grope of height $n$}
  based on a circle $\gamma$ is defined inductively as follows.  A
  grope of height $0$ based on $\gamma$ is $\gamma$ itself.  A
  \emph{grope of height $n$ based on $\gamma$} consists of a genus $g$
  oriented surface $S$ bounded by $\gamma$, and $2g$ symmetric gropes
  of height $n-1$ based on a circle which is attached to $S$ along
  $2g$ simple closed curves $a_1,\ldots,a_g, b_1,\ldots,b_g$ on $S$
  which form a symplectic basis (that is, the geometric intersections
  are given by $a_i\cdot a_j=0=b_i\cdot b_j$, $a_i\cdot
  b_j=\delta_{ij}$).  A \emph{grope of height $n.5$ based on $\gamma$}
  consists of a genus $g$ oriented surface $S$ bounded by $\gamma$ and
  $g$ symmetric gropes of height $n$ based on a circle attached to $S$
  along the half basis curves $a_i$, and $g$ symmetric gropes of
  height $n-1$ based on a circle attached to $S$ along the remaining
  curves~$b_j$.  The surface $S$ above is called the \emph{1st stage}
  of the grope.

  An \emph{annular grope of height $h$} is defined by replacing $S$
  above with a genus $g$ oriented surface with two boundary
  components.

  A grope embeds into~$\R^3$ in a standard way, and then into $\R^4$
  via $\R^3\subset \R^4$.  A \emph{framed embedding of a grope} in a
  4-manifold is an embedding of a regular neighborhood of its standard
  embedding in~$\R^4$.

\end{definition}

\begin{definition}
  \label{definition:grope-concordance}
  Two $m$-component links $L$ and $L'$ in $S^3$ are \emph{height $h$
    grope concordant} if there are $m$ framed annular
  gropes $G_i$ ($i=1,\ldots,m$) disjointly embedded in
  $S^3\times[0,1]$ which are cobounded by the zero-framed $i$th
  components of $-L\times 0$ and $L\times 1$.
\end{definition}

Schneiderman showed that if a knot is height $h$ grope concordant to
the unknot, then the knot is height $h$ Whitney tower concordant to
the unknot~\cite[Corollary~2]{Schneiderman:2006-1}.  One can verify
that his proof in \cite[Section~6]{Schneiderman:2006-1} can be carried
out for the case of grope concordance between links:

\begin{theorem}[{Link version of
    \cite[Corollary~2]{Schneiderman:2006-1}}]
  \label{theorem:grope-implies-whitney-tower}
  Two links are height $h$ Whitney tower cobordant if they are height
  $h$ grope concordant.
\end{theorem}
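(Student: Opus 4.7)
The plan is to follow Schneiderman's original proof \cite[Section~6]{Schneiderman:2006-1} essentially verbatim, adapting from the knot case (disk with genus) to the link case (annulus with genus). The input is a collection of $m$ disjointly embedded framed annular gropes $G_i$ of height $h$ in $S^3\times[0,1]$ cobounded by the zero-framed $i$th components of $-L\times 0$ and $L\times 1$. The output is a collection of transverse framed immersed annuli $C_i$ connecting the same boundary components and supporting a Whitney tower of height $h-1$.

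The first step is to apply Schneiderman's grope splitting to arrange, without changing the isotopy class of each $G_i$, that every stage above the annular first stage has genus one. Then I would proceed by downward induction, converting the top stages of the grope into Whitney disks one height at a time. The key local move is the grope-to-Whitney-tower contraction: at a topmost genus-one surface $\Sigma$ inside the grope, with symplectic basis $(a,b)$ capped by next-stage surfaces $T_a$ and $T_b$, one surgers $\Sigma$ along $T_a$ to produce an immersed surface of lower genus whose new self-intersections arise in pairs (from the double points of $T_a$), and those pairs are then bounded by Whitney disks built from parallel copies of $T_b$. Iterating this over all genus-one blocks and all heights produces, at the bottom, immersed framed annuli $C_i$ supporting a Whitney tower of height exactly one less than the height of the original grope.

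The replacement of the knot case's disk-with-genus first stage by an annulus-with-genus is benign because the contraction move is entirely supported in arbitrarily small neighborhoods of the interior genus-one blocks; the two boundary circles of each $G_i$ are never touched, and disjointness of the resulting $C_i$ and their higher stages follows from the disjointness of the original $G_i$ together with the locality of all moves. Likewise, the multi-component setting requires no new ideas, since all of Schneiderman's operations are performed independently on each grope component.

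The main technical obstacle, as in \cite{Schneiderman:2006-1}, is framing bookkeeping: one must verify that the Whitney disks produced at each contraction step are correctly framed, namely that the unique framing on each new Whitney disk $D$ restricts on $\partial D$ to the Whitney section determined by the two sheets whose intersections it pairs. Schneiderman's paper treats this point carefully using the framing data carried by the grope's symmetric basis curves; because the analysis is local on each genus-one piece, it carries over without change to the annular setting. Granting this, the induction produces a framed Whitney tower of height $h-1$ based on the $C_i$, which is precisely a height $h$ Whitney tower concordance from $L$ to~$L'$.
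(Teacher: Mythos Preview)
Your proposal is correct and follows exactly the approach the paper takes: the paper does not give an independent proof of this theorem, but simply asserts that Schneiderman's argument in \cite[Section~6]{Schneiderman:2006-1} carries over from the knot case to the link case. You make this explicit and supply the sanity checks (locality of the contraction moves, independence of components, preservation of the annular boundary, framing bookkeeping) that justify why the adaptation is benign---which is more than the paper itself spells out.
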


We remark that the converse of
Theorem~\ref{theorem:grope-implies-whitney-tower} is unknown, while
the asymmetric analogue (obtained by replacing ``height'' with
``order'') and its converse are both true due
to~\cite{Schneiderman:2006-1}.

Applying Theorem~\ref{theorem:link-whitney-tower-concobordism} and
Theorem~\ref{theorem:whitney-tower-and-solvability}, we obtain the
following result immediately:

\begin{corollary}
  \label{corollary:link-whitney-tower-solvability}
  If two links are height $n+2.5$ Whitney tower concordant or height
  $n+2.5$ grope concordant, then their exteriors are $n.5$-solvably
  cobordant, as bordered 3-manifolds.
\end{corollary}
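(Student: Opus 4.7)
The corollary is stated to follow immediately from the two cited results, so my plan is simply to chain them together. I will first reduce the grope concordance case to the Whitney tower concordance case, then translate from link concordance data in $S^3\times[0,1]$ to bordered cobordism data on exteriors, then invoke the Whitney-tower-implies-solvable-cobordism theorem.

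\textbf{Step 1 (reduction of the grope case).} If $L$ and $L'$ are height $n+2.5$ grope concordant, apply Theorem~\ref{theorem:grope-implies-whitney-tower}, which converts a height $h$ grope concordance between links into a height $h$ Whitney tower concordance; taking $h=n+2.5$ reduces us to the Whitney tower concordance hypothesis. After this reduction it suffices to treat the case of height $n+2.5$ Whitney tower concordance.

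\textbf{Step 2 (from link concordance to cobordism of exteriors).} Apply Theorem~\ref{theorem:link-whitney-tower-concobordism} with the parameter $h=n.5$ (so that $h+2=n+2.5$ matches our hypothesis). Its conclusion is that the link exteriors $E_L$ and $E_{L'}$, viewed as bordered 3-manifolds with the canonical 0-linking markings on their toroidal boundaries, are height $n.5$ Whitney tower cobordant.

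\textbf{Step 3 (from Whitney tower cobordism to solvable cobordism).} Apply the implication $(1)\Rightarrow(3)$ of Theorem~\ref{theorem:whitney-tower-and-solvability} to the pair $(E_L,E_{L'})$: a height $n.5$ Whitney tower cobordism between bordered 3-manifolds yields an $n.5$-solvable cobordism between them. This gives the conclusion.

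\textbf{Expected obstacle.} There is essentially no obstacle, since all three invoked results are already proved in the paper and the numerology ($h=n.5$, $h+2=n+2.5$, and a single height-preserving grope-to-Whitney-tower step) lines up cleanly. The only thing to be careful about is bookkeeping: checking that the grope-to-Whitney-tower step does not drop the height (it does not, by Theorem~\ref{theorem:grope-implies-whitney-tower}), and that the ``$h+2$'' shift in Theorem~\ref{theorem:link-whitney-tower-concobordism} is applied with the correct $h$. Given the explicit remark in the text that the corollary follows immediately from these two theorems, I would simply record the chain of implications and omit routine details.
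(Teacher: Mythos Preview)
Your proposal is correct and matches the paper's approach exactly: the paper states that the corollary follows immediately from Theorems~\ref{theorem:link-whitney-tower-concobordism} and~\ref{theorem:whitney-tower-and-solvability}, with the grope case handled by the just-preceding Theorem~\ref{theorem:grope-implies-whitney-tower}. Your three-step chain (grope $\Rightarrow$ Whitney tower concordance, then height drop by $2$ to Whitney tower cobordism of exteriors, then $(1)\Rightarrow(3)$ of Theorem~\ref{theorem:whitney-tower-and-solvability}) is precisely this, with the numerology checked correctly.
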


\begin{remark}
  Since $S^3\times[0,1]$ is spin, one can strengthen the conclusions
  of Theorem~\ref{theorem:link-whitney-tower-concobordism} and
  Corollary \ref{corollary:link-whitney-tower-solvability}, using
  Remark~\ref{remark:comparison-with-COT}: there exist a spin height
  $h$ Whitney tower cobordism and a spin $h$-solvable cobordism
  between the exteriors.
\end{remark}

\begin{remark}
  \label{remark:topological-vs-smooth}
  Everything in this paper can be carried out in both topological
  (assuming submanifolds are locally flat) and smooth category.
  Indeed, regarding our setup given in this section, one can see that
  the topological and smooth cases are equivalent in the following
  sense.  (1) \emph{Two bordered 3-manifolds are topologically
    $h$-solvably cobordant if and only if these are smoothly
    $h$-solvably cobordant.} (2) \emph{Two links are topologically
    height $h$ Whitney tower (resp.\ grope) concordant if and only if
    these are smoothly height $h$ Whitney tower (resp.\ grope)
    concordant.}  We give only a brief outline of the proof, omitting
  details.  (1)~From a topological $h$-solvable cobordism, one obtains
  a smooth 4-manifold by taking connected sum with
  $|E_8|\#8\overline{\C P^2}$ if the Kirby-Siebenmann invariant is
  nonzero, and then with copies of $S^2\times S^2$, where $|E_8|$ is
  Freedman's manifold with intersection form
  $E_8$~\cite[Chapter~10]{Freedman-Quinn:1990-1}.  One can see that
  the result is an $h$-solvable cobordism, using that
  $E_8\#8\overline{\C P^2}$ and $S^2\times S^2$ are simply connected
  and have a 0-lagrangian with 0-duals.  (2)~Appealing to Quinn's
  smoothing theorem \cite[\textsection 8.1]{Freedman-Quinn:1990-1}, a
  topological height $h$ Whitney tower concordance in $S^3\times[0,1]$
  deforms to a smoothly immersed 2-complex in such a way that new
  intersections are paired up by topological Whitney disks.  It turns
  out that similar smoothing arguments repeatedly applied to the
  topological Whitney disks give a smooth Whitney tower concordance of
  desired height.  For a topological grope concordance, the above
  smoothing arguments combined with Schneiderman's method
  \cite[Theorem~5]{Schneiderman:2006-1} that converts Whitney towers
  to gropes give a desired smooth grope concordance.
\end{remark}


\subsection{Basic properties of an $H_1$-cobordism}
\label{subsection:basic-properties-H1-cobordism}

In this section we give proofs of a few basic observations used in the
earlier parts of this section, for completeness and for the
convenience of readers.

\begin{lemma}
  \label{lemma:H_i(W,M)-for-H_1-cobordism}
  Suppose $W$ is an $H_1$-cobordism between bordered 3-manifolds $M$
  and~$M'$.  Then the following hold:
  \begin{enumerate}
  \item $H_i(W,M)=0=H_i(W,M')$ for $i\ne 2$.
  \item $W$ is a homology cobordism if and only if $H_2(W,M)=0$.
  \item The map $H_2(W)\to H_2(W,M)$ is surjective, and consequently
    $H_2(W,M)=\Coker\{H_2(M)\to H_2(W)\}$.  Similarly for~$M'$.
  \item $H_2(W,M)$ and $H_2(W,M')$ are torsion-free abelian groups of
    the same rank.
 \end{enumerate}
\end{lemma}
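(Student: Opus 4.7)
The plan is to combine the long exact sequence of the pair $(W,M)$ with Poincar\'e--Lefschetz duality and universal coefficients. I smooth corners along $\Sigma$ so that $W$ becomes a compact oriented 4-manifold whose boundary $M\cup_\partial -M'$ is a closed 3-manifold, with $M$ and $M'$ meeting along~$\Sigma$.

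For~(1), I would first read $H_i(W,M)=0$ for $i=0,1$ directly from the long exact sequence of the pair: the hypotheses $H_0(M)\cong H_0(W)$ (both connected) and $H_1(M)\cong H_1(W)$ kill the two low-dimensional relative groups. For $i=3,4$ the long exact sequence alone does not suffice, since there is no a priori control on~$H_3(W)$. To get past this I would invoke Poincar\'e--Lefschetz duality for the boundary decomposition $\partial W = M\cup_\Sigma -M'$, which yields
\[
H_i(W,M) \;\cong\; H^{4-i}(W,M').
\]
Applying the $i=0,1$ argument with $M$ and $M'$ swapped gives $H_0(W,M')=H_1(W,M')=0$; universal coefficients then forces $H^0(W,M')=H^1(W,M')=0$, and hence $H_3(W,M)=H_4(W,M)=0$. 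Exchanging the roles of $M$ and $M'$ throughout completes~(1).

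Parts~(2) and~(3) then fall out of the long exact sequence. Part~(2) is the observation that, given~(1), the inclusion induces an isomorphism $H_*(M)\cong H_*(W)$ if and only if the remaining relative group $H_2(W,M)$ vanishes. For~(3), the fragment
\[
H_2(M) \longrightarrow H_2(W) \longrightarrow H_2(W,M) \longrightarrow H_1(M) \xrightarrow{\,\cong\,} H_1(W)
\]
shows $H_2(W)\twoheadrightarrow H_2(W,M)$ with kernel equal to the image of $H_2(M)\to H_2(W)$, identifying $H_2(W,M)$ with the stated cokernel; the statement for $M'$ is the same argument.

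For~(4), I would apply Poincar\'e--Lefschetz once more together with universal coefficients. Since $H_1(W,M')=0$, universal coefficients collapses to $H^2(W,M')\cong\Hom(H_2(W,M'),\Z)$, so
\[
H_2(W,M) \;\cong\; \Hom(H_2(W,M'),\,\Z),
\]
which is automatically torsion-free. The symmetric identification shows $H_2(W,M')$ is torsion-free as well, and finite generation (which holds because $W$ is compact) then forces both groups to be free abelian of the same rank. I do not anticipate a serious obstacle here; the only care needed is to apply Poincar\'e--Lefschetz to the correct pair of boundary pieces in the decomposition $\partial W = M\cup_\Sigma -M'$, and the rest is bookkeeping with the long exact sequence and universal coefficients.
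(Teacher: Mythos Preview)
Your proof is correct and follows essentially the same approach as the paper: the long exact sequence handles $i<2$ and parts~(2)--(3), while Poincar\'e--Lefschetz duality $H_i(W,M)\cong H^{4-i}(W,M')$ combined with universal coefficients yields $H_2(W,M)\cong\Hom(H_2(W,M'),\Z)$, giving both~(4) and the cases $i>2$ of~(1). The paper's proof is terser but identical in substance.
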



\begin{proof}
  (3) and (1)${}_{i<2}$ follow from the long exact sequence and the
  $H_1$-cobordism condition.  By the
  Poincar\'e duality for relative cobordism and the universal
  coefficient theorem, we have $H_2(W,M)=\Hom(H_2(W,M'),\Z)$.  From this (4)
  follows.  Also it implies (1)${}_{i>2}$ since
  $H_i(W,M)=\Hom(H_{4-i}(W,M'),\Z)=0$ for $i>2$.  Now (2) follows
  from~(1).
\end{proof}

\begin{lemma}
  \label{lemma:langrangian-always-has-duals}
  Suppose $W$ is an $H_1$-cobordism between bordered 3-manifolds $M$
  and $M'$ and $\ell_1,\ldots,\ell_m \in H_2(W)$.
  \begin{enumerate}
  \item If the $\ell_i$ form a basis of a summand of $H_2(W,M)$, then
    there are $d_1,\ldots,d_m\in H_2(W)$ satisfying
    $\lambda_0(\ell_i,d_j)=\delta_{ij}$.
  \item If $\lambda_0(\ell_i,\ell_j)=0$ for any $i,j$ and there exist
    $d_1,\ldots,d_m\in H_2(W)$ satisfying
    $\lambda_0(\ell_i,d_j)=\delta_{ij}$, then $\{\ell_i, d_j\}$ is a
    basis of a summand of $H_2(W,M)$. Consequently, if in addition
    $m=\frac 12 \rank H_2(W,M)$, then $\{\ell_i\}$ generates a
    0-lagrangian and $\{\ell_i, d_j\}$ is a basis of $H_2(W,M)$.
  \end{enumerate}
\end{lemma}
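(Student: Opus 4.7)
The plan is to reduce both assertions to linear algebra on a Poincar\'e--Lefschetz duality pairing between $H_2(W,M)$ and $H_2(W,M')$ through which $\lambda_0$ factors.

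Setting up the pairing: by Lemma~\ref{lemma:H_i(W,M)-for-H_1-cobordism}, the $H_1$-cobordism condition gives $H_1(W,M')=0$ and makes $H_2(W,M')$ free abelian of the same rank as $H_2(W,M)$. Combining Poincar\'e--Lefschetz duality $H_2(W,M)\cong H^2(W,M')$ with the universal coefficient theorem then yields a canonical nonsingular pairing $\langle\,\cdot\,,\,\cdot\,\rangle\colon H_2(W,M)\times H_2(W,M')\to\Z$. A standard naturality-of-cup-product computation, using that the projections $H_2(W)\twoheadrightarrow H_2(W,M)$ and $H_2(W)\twoheadrightarrow H_2(W,M')$ correspond to restriction in cohomology, shows that for any $a,b\in H_2(W)$ one has $\lambda_0(a,b)=\langle [a]_M,[b]_{M'}\rangle$, so $\lambda_0$ factors through this perfect duality pairing.

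For (1), I would extend $\ell_1,\ldots,\ell_m$ to a basis of $H_2(W,M)$ using the summand hypothesis, then use nonsingularity of the pairing to pick $D_j\in H_2(W,M')$ with $\langle\ell_i,D_j\rangle=\delta_{ij}$ and $\langle e,D_j\rangle=0$ on the additional basis vectors $e$, and finally lift each $D_j$ to $d_j\in H_2(W)$ via the surjectivity of $H_2(W)\to H_2(W,M')$ (the $M'$-analog of Lemma~\ref{lemma:H_i(W,M)-for-H_1-cobordism}(3)). The factorization from the setup then gives $\lambda_0(\ell_i,d_j)=\delta_{ij}$.

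For (2), form the $2m\times 2m$ integer matrix $A$ whose entries are $\langle [x_i]_M,[y_j]_{M'}\rangle$, with $x_i,y_j$ ranging through $\ell_1,\ldots,\ell_m,d_1,\ldots,d_m$. The hypotheses force
\[
A=\begin{pmatrix}0&I\\ I&C\end{pmatrix}
\]
for some symmetric $m\times m$ integer matrix $C=(\lambda_0(d_i,d_j))$, which is unimodular. Hence the homomorphism $\Phi\colon H_2(W,M)\to\Z^{2m}$ sending $z$ to the vector of pairings with the $[\ell_j]_{M'}$ and $[d_j]_{M'}$ restricts to an isomorphism on the subgroup $B=\langle [\ell_i]_M,[d_j]_M\rangle$. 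By the standard splitting for a subgroup mapping isomorphically to a free quotient, $B$ is a free summand of $H_2(W,M)$ of rank $2m$ with basis $\{[\ell_i]_M,[d_j]_M\}$. When $m=\tfrac12\rank H_2(W,M)$, a rank comparison (using that $H_2(W,M)$ is torsion-free) forces $B=H_2(W,M)$; then $\{[\ell_i]_M\}$ is a basis of a half-rank summand, and combined with the hypothesis $\lambda_0(\ell_i,\ell_j)=0$ this exhibits $\{\ell_i\}$ as a $0$-lagrangian. The only genuinely nontrivial step is the identification of $\lambda_0$ with the Poincar\'e--Lefschetz pairing in the setup, and that is standard cup-product formalism.
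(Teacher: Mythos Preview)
Your proof is correct and follows essentially the same route as the paper's. Both arguments set up the Poincar\'e--Lefschetz duality isomorphism $H_2(W,M)\cong\Hom(H_2(W,M'),\Z)$ and factor $\lambda_0$ through it; for (1) both extend to a basis, take the dual in $H_2(W,M')$, and lift via surjectivity of $H_2(W)\to H_2(W,M')$; for (2) both read off the unimodular block matrix $\left[\begin{smallmatrix}0&I\\I&*\end{smallmatrix}\right]$ and conclude that the map $\Z^{2m}\to H_2(W,M)$ is split injective (the paper phrases this via the composition $A\xrightarrow{f}H_2(W,M)\xrightarrow{PD}\Hom(H_2(W,M'),\Z)\xrightarrow{g^*}\Hom(A,\Z)$, which is your $\Phi\circ f$). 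Your exposition of the splitting step in (2) is arguably a bit cleaner than the paper's, which compresses the general summand claim and the half-rank case into one sentence.
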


We remark there are several useful consequences of
Lemma~\ref{lemma:langrangian-always-has-duals}.  First, it follows
that any $0$-lagrangian has $0$-duals.  From this
Proposition~\ref{proposition:surgery-on-embedded-spherical-lagrangian}
follows, since surgery on framed embedded spheres generating a
$0$-lagrangian eliminates the 0-duals as well.  Finally, in
Definition~\ref{definition:0-lagrangian}, the r\^oles of $M$ and $M'$
can be switched.

\begin{proof}
  (1) Let $PD\colon H_2(W,M) \to \Hom(H_2(W,M'),\Z)$ be the relative
  Poincar\'e duality isomorphism.  Extend the classes of the $\ell_i$
  to a basis of $H_2(W,M)$ and choose a basis of $H_2(W,M') =
  \Hom(\Hom(H_2(W,M'),\Z),\Z)$ dual to $PD(\ell_i)$.  Since $H_2(W)\to
  H_2(W,M')$ is surjective, the dual basis elements are represented by
  some $d_i \in H_2(W)$.  By definition, viewing $\lambda_0$ as
  $H_2(W) \to \Hom(H_2(W),\Z)$, $\lambda_0$ is the composition of
  inclusion-induced maps with the isomorphism $PD\colon H_2(W,M) \to
  \Hom(H_2(W,M'),\Z)$.  Thus
  $\lambda_0(\ell_i,d_j)=PD(\ell_i)(d_j)=\delta_{ij}$ as desired.

  (2) Let $A=\Z^{2m}$ which is endowed with the standard basis
  $\{e_i\}$, and $f\colon A \to H_2(W,M)$ and $g\colon A \to
  H_2(W,M')$ be the maps sending $e_i$ to the image of $\ell_i$ for
  $i\le m$ and to the image of $d_{i-m}$ for $i > m$.  Then the
  composition
  \[
  A \xrightarrow{f} H_2(W,M)\xrightarrow{PD}\Hom(H_2(W,M'),\Z)
  \xrightarrow{g^*} \Hom(A,\Z)
  \]
  is represented by the block matrix $\big[\begin{smallmatrix}0 & I \\
    I & * \end{smallmatrix}\big]$.  In particular it is an
  isomorphism.  Since all the terms in the composition are free
  abelian groups of rank $2m$, it follows that $f$ is an isomorphism.
  %
\end{proof}



\section{Amenable signature theorem for Whitney towers}
\label{section:amenable-signature-theorem}

We denote by $\N G$ the group von Neumann algebra of a discrete
countable group~$G$.  For a finitely generated $\N G$-module $M$, the
\emph{$L^2$-dimension} $\ldim M \in \R_{\ge 0}$ can be defined.  For
more information on $\N G$ and the $L^2$-dimension, see L\"uck's book
\cite{Lueck:2002-1} and his paper~\cite{Lueck:1998-1}.  Also
\cite[Section~3.1]{Cha:2010-01} gives a quick summary of the
definition and properties of the $L^2$-dimension which are useful for
our purpose.

The algebra $\N G$ is endowed with the natural homomorphism $\Z G \to
\N G$, so that one can view $\N G$ as a $\N G$-$\Z G$ bimodule.  For a
finite CW pair $(X,A)$ endowed with $\pi_1(X) \to G$, its cellular
homology $H_*(X,A;\N G)$ with coefficients in $\N G$ is defined to be
the homology of the chain complex $\N G\otimesover{\Z G} C_*(X,A;\Z
G)$.  We denote the \emph{$L^2$-Betti number} by
\[
\bt_i(X,A;\N G) = \dimt H_i(X,A;\N G).
\]
When the choice of $\pi_1(X)\to G$ is clearly understood,
$\bt_i(X,A;\N G)$ is denoted by $\bt_i(X,A)$.

We denote by $b_i(X,A;R)$ the ordinary Betti number $\dim_R
H_i(X,A;R)$ for a field $R$, particularly for $R=\Q$ or~$\Z/p$.  We
write $b_i(X,A)=b_i(X,A;\Q)$ as usual.

For a closed 3-manifold $M$ and a homomorphism $\phi\colon\pi_1(M)\to
G$ into a discrete countable group $G$, we denote the von
Neumann-Cheeger-Gromov $\rho$-invariant by $\rhot(M,\phi)\in \R$.
See, for example, \cite[Section~5]{Cochran-Orr-Teichner:1999-1} as
well as
\cite{Chang-Weinberger:2003-1,Harvey:2006-1,Cha:2006-1,Cha-Orr:2009-01}
as references providing definitions and properties of $\rhot(M,\phi)$
useful for our purpose.

\begin{definition}
  \label{definition:amenability-strebel-D(R)}
  \leavevmode\Nopagebreak
  \begin{enumerate}
  \item A discrete group $G$ is \emph{amenable} if there is a finitely
    additive measure on $G$ which is invariant under the left
    multiplication.
  \item For a commutative ring $R$ with unity, a group $G$ lies in
    \emph{Strebel's class $D(R)$} if a homomorphism $\alpha\colon P\to
    Q$ between projective $RG$-modules is injective whenever
    $1_R\otimes_{RG}\alpha\colon R\otimes_{RG}P \to R\otimes_{RG}Q$ is
    injective~\cite{Strebel:1974-1}.
  \end{enumerate}
\end{definition}

The main result of this section is stated below.  

\begin{theorem}[Amenable Signature Theorem for solvable cobordism]
\label{theorem:amenable-signature-for-solvable-cobordism}
Suppose $W$ is a relative cobordism between two bordered 3-manifolds
$M$ and~$M'$, $G$ is an amenable group lying in $D(R)$, $R=\Z/p$ or
$\Q$, and $G^{(n+1)}=\{e\}$.  Suppose $\phi\colon \pi_1(M\cup_\partial
M') \to G$ extends to $\pi_1(W)$, and either one of the following
conditions holds:
\begin{enumerate}
\item[(I)] $W$ is an $n.5$-solvable cobordism and $\bt_1(M;\N G)=0$.
\item[(II)] $W$ is an $n.5$-solvable cobordism,
  $|\phi(\pi_1(M))|=\infty$, and
  \[
  \bt_1(M\cup_\partial M'; \N G) \ge b_1(M;R)+b_2(M;R)+b_3(M;R)-1.
  \]
\item[(III)] $W$ is an $(n+1)$-solvable cobordism.
\end{enumerate}
Then $\rhot(M\cup_\partial M',\phi)=0$.
\end{theorem}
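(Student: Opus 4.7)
The plan is to invoke the bordism formula for the Cheeger--Gromov invariant: since $\phi$ extends to $\pi_1(W)$, one has
\[
\rhot(M\cup_\partial M',\phi) = \sigma^{(2)}(W,\phi) - \sigma(W),
\]
where $\sigma^{(2)}$ denotes the $L^2$-signature of the $\N G$-valued intersection form on $H_2(W;\N G)$ and $\sigma$ the ordinary signature. So it suffices to show that the $L^2$-signature equals the ordinary signature of $W$. The uniform strategy is to push forward the given $(n+1)$-lagrangian $L\subset H_2(W;\Z[\pi/\pi^{(n+1)}])$ to $H_2(W;\N G)$. This is well defined because $G^{(n+1)}=\{e\}$ forces $\phi$ to factor through $\pi/\pi^{(n+1)}$, and the vanishing of $\lambda_{n+1}$ on $L$ guarantees that the $\N G$-valued intersection form vanishes on its image $\bar L$. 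The classical $L^2$-signature formalism then reduces the problem to checking that $\bar L$ is a genuine lagrangian, i.e.\ a self-annihilating submodule of half the $L^2$-rank of $H_2(W;\N G)$ in an appropriate sense.

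The central technical task is therefore the dimension count, and this is where the amenability of $G$ and the Strebel class $D(R)$ enter decisively. I would first compute the relative $L^2$-Betti numbers $\bt_i(W,M;\N G)$ from the long exact sequence of $(W,M)$ combined with Poincar\'e--Lefschetz duality $\bt_i(W,M;\N G)=\bt_{4-i}(W,M';\N G)$, reducing everything to $\bt_*(W;\N G)$ and $\bt_*(\partial W;\N G)$. The Cha--Orr machinery developed in \cite{Cha-Orr:2009-01} and \cite{Cha:2010-01}---namely, that for $G\in D(R)$ amenable, $R$-Betti numbers control $L^2$-dimensions of $\N G$-modules built from $\Z G$-chain complexes---lets me upgrade the rank equality $\rank_\Z H_2(W,M)=2m$ to the $\N G$-dimension bound $\bt_2(W,M;\N G)\le 2m$. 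Combined with the dual classes $d_j$, this bound is saturated, pinning down the $L^2$-dimension of the span of $L$ and the duals.

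The three cases correspond to three ways of eliminating the ``boundary correction'' that potentially obstructs $\bar L$ from being a lagrangian-with-dual over $\N G$. In case~(III), the $(n+1)$-duals lie in $H_2(W;\Z[\pi/\pi^{(n+1)}])$ and therefore also push forward cleanly to $\N G$, producing the classical Cochran--Orr--Teichner ``lagrangian with dual'' configuration and forcing $\sigma^{(2)}(W)=\sigma(W)$ directly. In case~(I), the duals only live over $\Z[\pi/\pi^{(n)}]$, but the hypothesis $\bt_1(M;\N G)=0$ kills the offending boundary term in the exact sequence computation, so that the projection $H_2(W;\N G)\to H_2(W,M;\N G)$ has the right $L^2$-rank and $\bar L$ together with the pushed-forward duals again span a half-rank piece. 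In case~(II), the infinite image $|\phi(\pi_1(M))|=\infty$ gives $\bt_0(M;\N G)=0$, and the explicit numerical inequality on $\bt_1(M\cup_\partial M';\N G)$ is exactly what is needed to force the same $L^2$-dimension bound through the Mayer--Vietoris sequence for $M\cup_\partial M'$.

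I expect the dominant obstacle to be case~(II): the hypothesis is tightly calibrated, and the argument likely requires comparing $L^2$- and $R$-Euler characteristics of $M$, $M'$, and $N$ while keeping careful track of the kernels of the amenable quotients. The other two cases, once the Cha--Orr dimension-comparison lemmas are in place, should follow the blueprint of \cite[Theorem~4.5]{Cha:2010-01} and \cite{Cochran-Orr-Teichner:1999-1}, with the spin assumption of the latter replaced by the framed spherical lagrangian structure built into Definition~\ref{definition:lagrangian-dual-solvable-cobordism}.
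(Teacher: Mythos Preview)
Your overall architecture is right: the bordism formula, pushing the $(n+1)$-lagrangian forward to $\N G$ via the factoring $\pi\to\pi/\pi^{(n+1)}\to G$, and then an $L^2$-dimension count using the amenable/$D(R)$ technology of \cite{Cha-Orr:2009-01,Cha:2010-01}. Case~(III) is essentially as you describe.

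There is, however, a genuine gap in your plan for Case~(I). You write that ``$\bar L$ together with the pushed-forward duals again span a half-rank piece,'' but in Case~(I) the duals are only $n$-duals, living in $H_2(W;\Z[\pi/\pi^{(n)}])$. Since $\phi$ only factors through $\pi/\pi^{(n+1)}$ and not through $\pi/\pi^{(n)}$, there is no natural way to push these duals forward to~$\N G$. The paper's proof of Case~(I) uses no duals over $\N G$ at all. Instead it invokes the \emph{refined} dimension estimate \cite[Theorem~3.11(2)]{Cha:2010-01}, which bounds from below the $L^2$-dimension of the submodule generated by a specified set of cycles: applying it to the generators of the lagrangian gives $\dimt L'\ge m$ for the image $L'\subset H_2(W,M;\N G)$, using only that the image of $L$ in $H_2(W,M;R)$ has $R$-dimension~$m$ (which follows from the existence of $0$-duals, not $n$-duals). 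This inequality is the crux you are missing.

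You also misidentify which map the hypothesis $\bt_1(M;\N G)=0$ controls. It is \emph{not} the projection $H_2(W;\N G)\to H_2(W,M;\N G)$; rather (after exchanging the roles of $M$ and $M'$) it enters through the exact sequence of the triple $(W,\partial W,M)$:
\[
H_2(M',\partial M';\N G)\to H_2(W,M;\N G)\to H_2(W,\partial W;\N G),
\]
where duality gives $\bt_2(M',\partial M')=\bt_1(M')=0$, so the second map is $L^2$-monic. This transports the bound $\dimt L'\ge m$ to the image $L''\subset A:=\Im\{H_2(W;\N G)\to H_2(W,\partial W;\N G)\}$, and since $\dimt A\le \bt_2(W,M;\N G)=2m$ one concludes via the $L^2$ signature-vanishing criterion \cite[Proposition~3.7]{Cha:2010-01}. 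The same remark applies to Case~(II): no duals are used over $\N G$; the inequality $\dimt L''\ge\frac12\dimt A$ is obtained from the long exact sequence of $(W,\partial W)$ and an Euler characteristic comparison, not Mayer--Vietoris.
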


\begin{remark}
  \leavevmode\Nopagebreak
  \begin{enumerate}
  \item The class of amenable groups in $D(R)$ is large.  For example
    see \cite{Cha-Orr:2009-01}, especially Lemma~6.8 and the
    discussion above it.  As a special case,
    Theorem~\ref{theorem:amenable-signature-for-solvable-cobordism}
    can be applied when $G$ is a PTFA group satisfying
    $G^{(n+1)}=\{e\}$.

  \item Case (I) provides a new interesting case.
    Section~\ref{subsection:vanishing-of-b^2_1} gives some useful
    instances to which case (I) applies.  In particular case (I) will
    be used to provide new applications to links with nonvanishing
    linking number in this paper.  See
    Section~\ref{section:concordance-to-hopf-link}.  Cases (II) and
    (III) are closely related to previously known results.  See
    Section~\ref{subsection:relationship-with-previous-results}.
    Further applications of (II) and (III) will be given in other
    papers.

  \item The assumption $|\phi(\pi_1(M))|=\infty$ in case (II) is not
    severe, since in many cases we are interested in infinite covers
    of~$M$ to extract deeper information.
  \end{enumerate}
\end{remark}

Recall from Corollary~\ref{corollary:link-whitney-tower-solvability}
that if two links are height $h+2$ Whitney tower (or grope)
concordant, then their exteriors are $h$-solvable cobordant.
Therefore
Theorem~\ref{theorem:amenable-signature-for-solvable-cobordism} also
obstructs height $n+2.5$ and $n+3$ Whitney tower (and grope)
concordance of links.



The proof of
Theorem~\ref{theorem:amenable-signature-for-solvable-cobordism} is
given in Section~\ref{subsection:proof-amenable-signature-theorem}.
Readers more interested in its applications and relationship with
previously known results may skip the proof and proceed to
Sections~\ref{subsection:vanishing-of-b^2_1},
\ref{subsection:relationship-with-previous-results}, and then to
Section~\ref{section:concordance-to-hopf-link}.

\subsection{Proof of Amenable Signature
  Theorem~\ref{theorem:amenable-signature-for-solvable-cobordism}}
\label{subsection:proof-amenable-signature-theorem}

To prove
Theorem~\ref{theorem:amenable-signature-for-solvable-cobordism}, we
need estimations of various $L^2$-dimensions.  One of the primary
ingredients is the following result which appeared
in~\cite{Cha:2010-01}:

\begin{theorem}[{\cite[Theorem~3.11]{Cha:2010-01}}]
  \label{theorem:L2-dim-estimate-of-NG-homology}
  \leavevmode\Nopagebreak
  \begin{enumerate}
  \item Suppose $G$ is amenable and in $D(R)$ with $R=\Q$ or $\Z/p$,
    and $C_*$ is a projective chain complex over $\Z G$ with $C_n$
    finitely generated.  Then we have
    \[
    \ldim H_n(\N G \otimesover{\Z G}C_*) \le \dim_R
    H_n(R\otimesover{\Z G}C_*).
    \]
  \item In addition, if $\{x_i\}_{i\in I}$ is a collection of
    $n$-cycles in $C_n$, then for the submodules $H\subset H_n(\N G
    \otimes C_*)$ and $\overline H \subset H_n(R\otimes C_*)$
    generated by $\{[1_{\N G}\otimes x_i]\}_{i\in I}$ and
    $\{[1_{R} \otimes x_i]\}_{i\in I}$, respectively, we have
    \[
    \ldim H_n(\N G \otimesover{\Z G} C_*)-\ldim H \le \dim_R
    H_n(R\otimesover{\Z G} C_*) - \dim_R \overline{H}.
    \]
  \end{enumerate}
\end{theorem}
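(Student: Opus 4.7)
The plan is to prove both parts by a rank-counting decomposition that reduces them to one key dimension-comparison lemma for maps out of finitely generated free $\Z G$-modules.

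Since $C_n$ is finitely generated projective, both $\ldim(\N G\otimesover{\Z G}C_n)$ and $\dim_R(R\otimesover{\Z G}C_n)$ equal a common finite integer $c_n$. Using additivity of $\ldim$ under short exact sequences of $\N G$-modules, together with the parallel additivity of $\dim_R$, one obtains the rank--nullity identity
\[
\ldim H_n(\N G\otimesover{\Z G}C_*) = c_n - \ldim \text{im}(d_n^{\N G}) - \ldim \text{im}(d_{n+1}^{\N G}),
\]
where $d_k^{\N G}$ denotes the tensored differential, together with the analogous formula over $R$. Thus part (1) reduces to proving
\[
\ldim \text{im}(d_k^{\N G}) \ge \dim_R \text{im}(d_k^R)
\quad\text{for } k=n,n+1.
\]

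The heart of the proof is the following key lemma: if $g\colon(\Z G)^r\to Q$ is a map to a projective $\Z G$-module such that $1_R\otimes g$ is injective, then $\ldim\ker(1_{\N G}\otimes g)=0$, and consequently $\ldim\text{im}(1_{\N G}\otimes g)=r$. Given this lemma, the required inequality follows by choosing $y_1,\ldots,y_r\in C_k$ whose images under $1_R\otimes d_k$ are $R$-linearly independent in $R\otimesover{\Z G}C_{k-1}$ (with $r=\dim_R\text{im}(d_k^R)$), setting $g(e_i)=d_k(y_i)$, and noting $\text{im}(g)\subseteq\text{im}(d_k)$. The lemma itself proceeds in two steps: Strebel's property $G\in D(R)$ applied to the factorization $1_R\otimes g = 1_R\otimes_{RG}(1_{RG}\otimes g)$ upgrades injectivity of $1_R\otimes g$ to injectivity of $1_{RG}\otimes g$ as a map of projective $RG$-modules; amenability then converts this algebraic injectivity into $L^2$-acyclicity of the kernel over $\N G$.

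For part (2), I would modify $C_*$ to $\widetilde C_*$ by setting $\widetilde C_{n+1}=C_{n+1}\oplus\bigoplus_{i\in I}\Z G$, extending $d_{n+1}$ so that the $i$th new generator has boundary $x_i$, and leaving the other modules and differentials unchanged. The new complex is projective with $\widetilde C_n=C_n$ finitely generated, and a direct computation gives $H_n(\N G\otimesover{\Z G}\widetilde C_*)\cong H_n(\N G\otimesover{\Z G}C_*)/H$ and likewise $H_n(R\otimesover{\Z G}\widetilde C_*)\cong H_n(R\otimesover{\Z G}C_*)/\overline H$. Applying part~(1) to $\widetilde C_*$ and invoking additivity of dimensions under quotients then delivers (2).

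The main obstacle is the amenability-to-vanishing step: passing from injectivity of $1_{RG}\otimes g$ to $\ldim\ker(1_{\N G}\otimes g)=0$. When $R=\Q$ this follows at once from L\"uck's dimension-flatness theorem, which asserts that $\Tor^{\Q G}_i(\N G,-)$ is $L^2$-dimension-trivial for all $i\ge 1$ when $G$ is amenable. When $R=\Z/p$ no such direct flatness is available, and one must argue more delicately via a F{\o}lner-type modular approximation combined with the $D(R)$ condition, along the lines developed in \cite{Cha-Orr:2009-01}. Navigating this modular step cleanly is the genuine technical heart of the proof.
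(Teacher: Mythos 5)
The statement you are proving is Theorem~\ref{theorem:L2-dim-estimate-of-NG-homology}, which this paper does \emph{not} prove: it is imported verbatim from \cite[Theorem~3.11]{Cha:2010-01}, so there is no in-paper argument to compare against. Judged on its own terms, your outline has the right global shape --- a rank--nullity reduction of part~(1) to a dimension inequality for images of differentials, a ``key lemma'' converting $R$-injectivity into $L^2$-triviality of the kernel over $\N G$, and the mapping-cone trick (adjoining $\Z G$ summands in degree $n+1$ with boundaries $x_i$) to deduce part~(2) from part~(1). That last reduction is clean and correct: the new complex still has $\widetilde C_n = C_n$ finitely generated, and additivity of $\ldim$ and $\dim_R$ on the quotients $H_n/H$ and $H_n/\overline H$ gives exactly the claimed inequality.

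There are two real gaps. The lesser one is the opening claim that $\ldim(\N G\otimesover{\Z G}C_n)$ and $\dim_R(R\otimesover{\Z G}C_n)$ ``equal a common finite integer $c_n$.'' For a merely projective $C_n$ this is not automatic: the $L^2$-trace of an idempotent over $\Z G$ and its augmented $R$-trace need not coincide without invoking something about idempotents in $\Z G$ for these $G$. Fortunately the rank--nullity argument only needs the one-sided bound $\ldim(\N G\otimes C_n)\le\dim_R(R\otimes C_n)$, which can be extracted from part~(1) applied to a two-term complex presenting $C_n$ as a quotient of a finitely generated free module --- but you should say this rather than assert equality. The more serious gap is the key lemma itself in the case $R=\Z/p$: you correctly observe that for $R=\Q$ one can pass through $\Q G\subset\C G\subset\N G$ and invoke L\"uck's dimension-flatness of $\N G$ over $\C G$ for amenable $G$, but for $R=\Z/p$ there is no ring map $\Z/p[G]\to\N G$ and the dimension-flatness route is simply unavailable. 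You name the F{\o}lner/modular-approximation machinery of \cite{Cha-Orr:2009-01} as the way through and then stop. That step is not a loose end; it \emph{is} the theorem --- the whole point of the amenable-plus-$D(\Z/p)$ hypothesis is to make precisely this comparison work, and it requires a genuine argument (an approximation of $\ldim\ker$ by finite-dimensional $\Z/p$-ranks over F{\o}lner sets, plus the $D(\Z/p)$ injectivity criterion to control those ranks). Without it, the proposal proves only the $R=\Q$ case.
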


Lemma~\ref{lemma:L2-betti-numbers-of-H_1-cobordism} below states
various Betti number observations for an $H_1$-cobordism.  We remark
that only Lemma~\ref{lemma:L2-betti-numbers-of-H_1-cobordism} (1), (2)
are used in the proof of Amenable Signature
Theorem~\ref{theorem:amenable-signature-for-solvable-cobordism} (I)
and (III).  Lemma~\ref{lemma:L2-betti-numbers-of-H_1-cobordism}
(3)--(7) are used in the proof of case (II).

\begin{lemma}
  \label{lemma:L2-betti-numbers-of-H_1-cobordism}
  Suppose $W$ is a relative $H_1$-cobordism between $M$ and $M'$,
  $R=\Q$ or $\Z/p$, and $\phi\colon \pi_1(W) \to G$ is a homomorphism
  into an amenable group $G$ in $D(R)$.  Then the following hold:
  \begin{enumerate}
  \item $\bt_i(W,M) = 0$ for $i\ne 2$.
  \item $\bt_2(W,M)=b_2(W,M)=b_2(W,M;R)$.
  \item $\bt_0(W,\partial W)=0=\bt_4(W)$.
  \item $\bt_1(W,\partial W)=0=\bt_3(W)$ if either $\partial M\ne
    \emptyset$ or $\Im\{\pi_1(M) \to \pi_1(W)\to G\}$ is infinite.
  \item $\bt_4(W,\partial W)=0=\bt_0(W)$ if $\Im\{\pi_1(W)\to G\}$ is
    infinite.
  \item $b_1(W,\partial W)=b_3(W)=b_3(M)$.
  \item $b_2(W)=b_2(M)+b_2(W,M)$ and $b_2(W;R)=b_2(M;R)+b_2(W,M;R)$.
 \end{enumerate}
\end{lemma}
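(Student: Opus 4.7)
The plan is to handle the seven assertions in three groups, according to the tools they require. Throughout, I will use Lemma~\ref{lemma:H_i(W,M)-for-H_1-cobordism} for the integral homology of the pair $(W,M)$, Theorem~\ref{theorem:L2-dim-estimate-of-NG-homology} for passing to $L^2$-Betti numbers, and $L^2$-Poincar\'e–Lefschetz duality $\bt_i(W;\N G) = \bt_{4-i}(W,\partial W;\N G)$ to flip between absolute and relative groups.

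Assertions (6) and (7) involve only ordinary Betti numbers and come directly from the long exact sequence of the pair $(W,M)$ together with Lemma~\ref{lemma:H_i(W,M)-for-H_1-cobordism}: the vanishings $H_1(W,M) = H_3(W,M) = H_4(W,M) = 0$ and the fact that $H_1(M) \to H_1(W)$ is an isomorphism give a short exact sequence $0\to H_2(M) \to H_2(W) \to H_2(W,M) \to 0$ and an isomorphism $H_3(M) \cong H_3(W)$. The short exact sequence splits since $H_2(W,M)$ is free abelian, yielding (7) (and its analogue over $R$). The isomorphism gives $b_3(M)=b_3(W)$, and classical Poincar\'e–Lefschetz duality gives $b_3(W)=b_1(W,\partial W)$, proving (6).

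For (1) and (2), I will first use universal coefficients to promote the integral vanishing to $R$-coefficients: since $H_2(W,M;\Z)$ is free and all other $H_i(W,M;\Z)$ vanish, $H_i(W,M;R)=0$ for $i\ne 2$ and $b_2(W,M;R)=\rank H_2(W,M;\Z)$. Applying Theorem~\ref{theorem:L2-dim-estimate-of-NG-homology} (1) immediately gives (1). For (2), the same theorem gives $\bt_2(W,M)\le b_2(W,M;R)$; the Euler characteristic identity $\sum(-1)^i \bt_i(W,M) = \chi(W,M) = \sum(-1)^i b_i(W,M;R)$ (both sides collapsing to a single term in degree $2$) then forces equality. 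Assertion (3) follows by the same method applied to $(W,\partial W)$: $H_0(W,\partial W;\Z)=0$ since $\partial W \ne \emptyset$ and $W$ is connected, so $\bt_0(W,\partial W)=0$, and $L^2$-duality gives $\bt_4(W)=0$. For (5), the infinite-image hypothesis for $\pi_1(W)\to G$ is precisely what gives $\bt_0(W)=0$ as a general property of $L^2$-homology over amenable coefficients, and duality gives $\bt_4(W,\partial W)=0$.

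The main obstacle is assertion (4); I split into the two stated alternatives. When $\partial M\ne\emptyset$, the surface $\Sigma=\partial M=\partial M'$ makes $\partial W$ connected, and the composition $H_1(M)\to H_1(\partial W)\to H_1(W)$ is an isomorphism by the $H_1$-cobordism condition, so $H_1(\partial W)\to H_1(W)$ is surjective; the long exact sequence of the pair then gives $H_1(W,\partial W;\Z)=0$, from which $\bt_1(W,\partial W)=0$ via Theorem~\ref{theorem:L2-dim-estimate-of-NG-homology} and $\bt_3(W)=0$ via duality. The delicate case is $|\phi(\pi_1(M))|=\infty$: here $\partial W$ may be disconnected (when $\partial M=\emptyset$), so $b_1(W,\partial W;R)$ can equal $1$ and the naive bound is too weak. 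Instead I will compute $\bt_3(W)$ using the refined estimate Theorem~\ref{theorem:L2-dim-estimate-of-NG-homology} (2) applied to the 3-cycle $[M]\in H_3(W)$. This class generates $H_3(W;R)\cong R$ via the isomorphism $H_3(M)\cong H_3(W)$ already established in the proof of (6). On the $\N G$-side, $[1_{\N G}\otimes M]$ is the image of the $\N G$-fundamental class under $H_3(M;\N G)\to H_3(W;\N G)$; Poincar\'e duality on the closed $3$-manifold $M$ gives $\bt_3(M;\N G)=\bt_0(M;\N G)=0$ under the infinite-image hypothesis, so the submodule of $H_3(W;\N G)$ generated by $[1_{\N G}\otimes M]$ has $L^2$-dimension zero. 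Theorem~\ref{theorem:L2-dim-estimate-of-NG-homology} (2) therefore yields $\bt_3(W)\le 0$, and $L^2$-duality gives $\bt_1(W,\partial W)=0$.
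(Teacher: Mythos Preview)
Your treatment of (1), (2), (3), (5), (6), and (7) is correct and essentially matches the paper's argument. For (4) in the case $\partial M\ne\emptyset$, your route via $H_1(W,\partial W;\Z)=0$ is different from the paper's but valid.

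The gap is in (4) under the hypothesis $|\phi(\pi_1(M))|=\infty$ with $\partial M=\emptyset$. Theorem~\ref{theorem:L2-dim-estimate-of-NG-homology}~(2) requires the $x_i$ to be cycles in the $\Z G$-chain complex $C_*$, so that $1_{\N G}\otimes x_i$ and $1_R\otimes x_i$ are simultaneously cycles coming from the \emph{same} element. The integral fundamental cycle of $M$ is a cycle in $C_3(W;\Z)$, but a lift of it to $C_3(W;\Z G)$ is typically not a $\Z G$-cycle: when the image of $\pi_1(M)$ in $G$ is infinite, each component of the $G$-cover of $M$ is a noncompact $3$-manifold, so $H_3(M;\Z G)=0$, and the map $H_3(W;\Z G)\to H_3(W;R)$ need not hit the generator $[M]$ (for instance take $W=M\times[0,1]$ with $M$ closed and $\phi$ having infinite image). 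Thus there is no $\Z G$-cycle $x$ with $[1_R\otimes x]=[M]$ to feed into the theorem, and the desired inequality cannot be extracted. Your sentence ``$[1_{\N G}\otimes M]$ is the image of the $\N G$-fundamental class'' presupposes exactly the object that fails to exist.

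The paper avoids this by using the long exact sequence of the triple $(W,\partial W,M')$ together with excision $H_0(\partial W,M';\N G)\cong H_0(M,\partial M;\N G)$:
\[
H_1(W,M';\N G)\to H_1(W,\partial W;\N G)\to H_0(M,\partial M;\N G).
\]
The left term has $L^2$-dimension zero by (1) with $M'$ in place of $M$; the right term equals $H_0(M;\N G)$ when $\partial M=\emptyset$ and has $L^2$-dimension zero by the infinite-image hypothesis. Hence $\bt_1(W,\partial W)=0$, and duality gives $\bt_3(W)=0$.
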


\begin{proof}
  Recall that $W$, $M$, $M'$ are all assumed to be connected by our
  convention.

  (1) Applying
  Theorem~\ref{theorem:L2-dim-estimate-of-NG-homology}~(1) to the
  chain complex $C_*(W,M;\Z G)$, it follows that $\bt_i(W,M)\le
  b_i(W,M;R)$.  Thus $\bt_i(W,M;\N G)=0$ for $i\ne 2$ since
  $b_i(W,M;R)=0$ for $i\ne 2$ by
  Lemma~\ref{lemma:H_i(W,M)-for-H_1-cobordism} (1) and an easy
  application of the universal coefficient theorem.

  (2) Note that $b_2(W,M)=b_2(W,M;R)$ by
  Lemma~\ref{lemma:H_i(W,M)-for-H_1-cobordism} (1), (4) and the
  universal coefficient theorem.  Since the Euler characteristic of
  $(W,M)$ can be computed using either $b_i(-)$ or $\bt_i(-;\N G)$,
  from (1) it follows that $\bt_2(W,M;\N G) = b_2(W,M)$.

  (3) Since $W$ is connected and $\partial W$ is nonempty, we may
  assume that there is no 0-cell in the CW complex structure of the
  pair $(W,\partial W)$.  It
  follows immediately that $\bt_0(W,\partial W)=0$.  By duality,
  $\bt_4(W)=\bt_0(W,\partial W)=0$.

  (4) First we show that $\bt_0(M,\partial M)=0$; if $\partial M\ne
  \emptyset$, then $\bt_0(M,\partial M)=0$ as in~(3).  If the image of
  $\pi_1(M)$ in $G$, say $H$, is infinite, then $\ldim H_0(M;\N G) =
  \ldim \N G \otimes_{\C G} \C[G/H]=0$ by
  \cite[Lemma~6.33]{Lueck:2002-1}, \cite[Lemma~3.4]{Lueck:1998-1}.

  Now consider the long exact sequence of the triple $(W,\partial
  W,M')$ combined with an excision isomorphism:
  \[
  H_1(W,M';\N G) \to H_1(W,\partial W;\N G) \to
  H_0(\partial W,M';\N G) \cong H_0(M,\partial M;\N G)
  \]
  From (1) above with $M'$ in place of $M$, it follows that
  $\bt_1(W,\partial W)=0$.  By duality $\bt_3(W)=0$.

  (5) By the argument in (4), $|\Im\{\pi_1(W)\to G\}|=\infty$ implies
  $\bt_0(W)=0$.  By duality $\bt_4(W,\partial W)=0$.

  (6) The first equality follows from Poincar\'e duality.  By
  Lemma~\ref{lemma:H_i(W,M)-for-H_1-cobordism} (1),
  $b_3(W,M)=0=b_4(W,M)$.  From the long exact sequence of $(W,M)$, the
  second equality follows.

  (7) The conclusion follows from the exact sequence
  \[
  H_3(W,M)\to H_2(M) \to H_2(W) \to H_2(W,M) \to H_1(M) \to H_1(W)
  \]
  by observing that $H_3(W,M)\cong H^1(W,M')=0$ and $H_1(M)\cong
  H_1(W)$.  Similarly for $R$-coefficients.
\end{proof}




\begin{proof}[Proof of
  Theorem~\ref{theorem:amenable-signature-for-solvable-cobordism}]
  Recall from our assumption that $W$ is an $H_1$-cobordism between
  $M$ and $M'$ and $\phi\colon \pi_1(W)\to G$ is a homomorphism where
  $G$ is amenable and in $D(R)$ and $G^{(n+1)}=\{e\}$.

  Since $\partial W=M \cup_\partial M'$ over $G$, the
  $\rhot$-invariant is computed by the formula
  \[
  \rhot(M\cup_\partial M',\phi) = \lsign W - \sign W
  \]
  where $\lsign W$ denotes the $L^2$-signature of $W$ over~$\N G$, and
  $\sign W$ is the ordinary signature.

  Since $H_2(W)\to H_2(W,M)$ is surjective by
  Lemma~\ref{lemma:H_i(W,M)-for-H_1-cobordism}, the ordinary
  intersection pairing of $W$ is defined on $H_2(W,M)$ as a
  nonsingular pairing.  Furthermore, since there is a $0$-lagrangian,
  this intersection pairing is of the form
  $\left[\begin{smallmatrix}0&I\\I&*\end{smallmatrix}\right]$.  From
  this it follows that $\sign W=0$.

  In the remaining part of the proof we show $\lsign W=0$.  By
  definition $\lsign W$ is the $L^2$-signature of the intersection
  form
  \[
  H_2(W; \N G) \times H_2(W;\N G) \to \N G.
  \]
  This induces a hermitian form, say $\lambda_A$, on $A := \Im \{
  H_2(W;\N G) \to H_2(W,\partial W; \N G)\}$, and $\lambda_A$ is
  $L^2$-nonsingular in the sense of \cite[Section~3.1]{Cha:2010-01},
  namely both the kernel and cokernel of the associated homomorphism
  $A \to A^*=\Hom(A,\N G)$ given by $a\mapsto (b\mapsto \lambda(b,a))$
  have $L^2$-dimension zero.  We have $\lsign W = \lsign \lambda_A$
  since the intersection form vanishes on the image of $H_2(\partial
  W;\N G)$.

  Now we consider the three given cases.  To simplify notations we
  write $\pi=\pi_1(W)$, $m=\frac12 b_2(W,M)$.

  \smallskip

  \textbf{Case (I).}  For notational convenience, we exchange the
  r\^oles of $M$ and $M'$, namely assume $b_1^{(2)}(M')=0$.  Suppose
  $L$ is an $(n.5)$-lagrangian in $H_2(W;\Z[\pi/\pi^{(n+1)}])$.  Since
  $G^{(n+1)}$ is trivial, $\phi\colon \pi\to G$ induces a homomorphism
  $\pi/\pi^{(n+1)} \to G$.  We denote by $L'$ and $L''$ the images of
  $L$ in $H_2(W,M;\N G)$ and $H_2(W,\partial W;\N G)$ respectively.

  Note that the images of $L$ in $H_2(W,M;R)$ and $H_2(W,\partial W;R)$
  have $R$-dimension $m$, since the image of $L$ in $H_2(W;R)$ has
  0-duals by Lemma~\ref{lemma:langrangian-always-has-duals}~(1).  Applying
  Theorem~\ref{theorem:L2-dim-estimate-of-NG-homology} (2) to a
  collection of 2-cycles in $C_*(W,M;\Z G)$ generating the submodule
  $L' \subset H_2(W,M;\N G)$, and then by applying
  Lemma~\ref{lemma:L2-betti-numbers-of-H_1-cobordism}~(2), we obtain
  \[
  \ldim L' \ge \bt_2(W,M)-b_2(W,M;R)+m = m.
  \]

  By duality we have $\bt_2(M',\partial M') = \bt_1(M') = 0$.  Looking
  at the exact sequence of the triple $(W,\partial W, M)$
  \[
  H_2(M',\partial M';\N G) \to H_2(W,M;\N G) \xrightarrow{\alpha}
  H_2(W,\partial W;\N G),
  \]
  the second homomorphism $\alpha$ is $L^2$-monic, namely its kernel
  is of $L^2$-dimension zero.  From this and the above paragraph, it
  follows that $\ldim \alpha(L')=\ldim L' \ge m$, that is, $\ldim
  L''\ge m$.  (Recall that $\alpha(L')=L''$ by definition.)  On the
  other hand, since the map $H_2(W;\N G) \to H_2(W,\partial W;\N G)$
  factors through $H_2(W,M;\N G)$, we have $\ldim A \le \bt_2(W,M) =
  2m$ by Lemma~\ref{lemma:L2-betti-numbers-of-H_1-cobordism}~(2).

  Summarizing, $\lambda_A$ vanishes on the submodule $L''$ satisfying
  $\ldim L'' \ge \frac12 \ldim A$.  Now we apply the $L^2$-version of
  ``topologist's signature vanishing criterion'': if $\lambda\colon
  A\times A\to \N G$ is an $L^2$-nonsingular hermitian form over $\N
  G$ and there is a submodule $H \subset A$ such that $\lambda(H\times
  H)=0$ and $\ldim H \ge \frac 12 \ldim A$, then $\lsign \lambda=0$.
  (See \cite[Proposition~3.7]{Cha:2010-01}.)  In our case, it follows
  that $\lsign\lambda_A=0$.  This completes the proof of~(I).

  \smallskip

  \textbf{Case (II).}  Recall the assumption that $W$ is an
  $n.5$-solvable cobordism, $|\phi(\pi_1(M))|=\infty$, and
  $\bt_1(M\cup_\partial M') \ge b_1(M;R)+b_2(M;R)+b_3(M;R)-1$.  Let
  $A$, $L''$ be as in case~(I).  We will use alternative estimates of
  the $L^2$-dimensions to show that $\ldim L'' \ge \frac12 \ldim A$.
  First, applying Theorem~\ref{theorem:L2-dim-estimate-of-NG-homology}
  (2) to (the 2-cycles generating) $L''$ as a submodule of
  $H_2(W,\partial W;\N G)$ and then using Poincar\'e duality, we
  obtain
  \begin{align*}
    \ldim L'' & \ge \bt_2(W,\partial W) - b_2(W,\partial W;R)+m
    \\
    &= \bt_2(W)-b_2(W;R)+m.
  \end{align*}
  By looking at the homology long exact sequence for $(W,\partial W)$,
  we have
  \begin{align*}
    \ldim A &= \bt_2(W,\partial W)-\bt_1(\partial W)+\bt_1(W)
    \\
    &= \bt_2(W)-\bt_1(\partial W)+\bt_1(W)
  \end{align*}
  since $\bt_1(W,\partial W)=0$ by
  Lemma~\ref{lemma:L2-betti-numbers-of-H_1-cobordism}~(4).  It follows
  that
  \begin{align*}
    2\ldim L'' - \ldim A &\ge
    \bt_2(W)-\bt_1(W)-2b_2(W;R)+\bt_1(\partial W)+2m.
  \end{align*}
  Computing the Euler characteristic of $W$ using $b_i(W;R)$ and then
  using $\bt_i(W)$, we obtain
  \[
  \bt_2(W)-\bt_1(W) = b_2(W;R)-b_1(W;R)+1-b_3(M;R)
  \]
  by Lemma~\ref{lemma:L2-betti-numbers-of-H_1-cobordism} (3), (4),
  (5), and~(6).  Plugging this into the last inequality and then using
  the fact $H_1(W;R)\cong H_1(M;R)$ and
  Lemma~\ref{lemma:L2-betti-numbers-of-H_1-cobordism}~(7), it follows
  that
  \begin{align*}
    2\ldim L''-\ldim A &\ge \bt_1(\partial
    W)-b_1(W;R)-b_2(W;R)-b_3(M;R)+2m+1
    \\
    &= \bt_1(\partial W) - b_1(M;R) -b_2(M;R)-b_3(M;R)+1.
  \end{align*}
  Therefore $\ldim L'' \ge \frac12 \ldim A$ under our hypothesis.
  This proves~(II).

  \smallskip

  \textbf{Case (III).}  Now suppose $W$ is an $(n+1)$-solvable
  cobordism.  Suppose that $L$ is an $(n+1)$-lagrangian generated by
  $\ell_1,\ldots,\ell_m\in H_2(W;\Z[\pi/\pi^{(n+1)}])$ and
  $d_1,\ldots,d_m\in H_2(W;\Z[\pi/\pi^{(n+1)}])$ are $(n+1)$-duals
  satisfying $\lambda_{n+1}(\ell_i,d_j)=\delta_{ij}$.  Let $\ell''_i$
  be the image of $\ell_i$ in $H_2(W,\partial W;\N G)$.  The images
  $d_j''\in H_2(W; \N G)$ of the $d_j$ are dual to $\ell''_i$ with
  respect to the intersection pairing $H_2(W,\partial W; \N G)\times
  H_2(W; \N G) \to \N G$.  It follows that the $\ell''_i$ are linearly
  independent in $H_2(W,\partial W;\N G)$ over~$\N G$.  Therefore, the
  $\ell''_i$ generate a free $\N G$-module $L'' \subset A \subset
  H_2(W,\partial W;\N G)$ of rank $m$, and in particular $\ldim
  L''=m$.  Since $\ldim A \le \bt_2(W,M)=2m$ as in case (I), it
  follows that $\ldim L'' \ge \frac12 \ldim A$.  This completes the
  proof of~(III).
\end{proof}

\subsection{Vanishing of the first $L^2$-Betti number}
\label{subsection:vanishing-of-b^2_1}

In this subsection we discuss some cases to which Amenable Signature
Theorem~\ref{theorem:amenable-signature-for-solvable-cobordism}~(I)
applies.  We begin with a general statement providing several examples
with vanishing first $L^2$-Betti number, which generalizes
\cite[Lemma~3.12]{Cha:2010-01}, \cite[Proposition~2.11]{Cochran-Orr-Teichner:1999-1}.

\begin{proposition}
  \label{proposition:complexes-with-b^2_1=0}
  Suppose $G$ is amenable and lies in $D(R)$ for $R=\Z/p$ or $\Q$.
  Suppose $A \to X$ is a map between connected finite complexes $A$
  and $X$ inducing a surjection $H_1(A;R)\to H_1(X;R)$.  If
  $\phi\colon\pi_1(X)\to G$ is a homomorphism which induces an
  injection $\pi_1(A) \to G$, then $\bt_1(X;\N G)=\bt_1(A;\N G)=0$.
\end{proposition}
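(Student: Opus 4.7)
The plan is to show $\bt_1(X;\N G)=\bt_1(A;\N G)=0$ in two steps.

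First I would reduce the statement for $X$ to that for $A$ via the mapping cylinder. Replacing the given map $A\to X$ by the inclusion of $A$ into the mapping cylinder $X'\simeq X$, the hypothesis that $H_1(A;R)\to H_1(X;R)$ is surjective (together with $A$ and $X$ being connected) gives $H_i(X',A;R)=0$ for $i=0,1$. Applying Theorem~\ref{theorem:L2-dim-estimate-of-NG-homology}~(1) to the cellular chain complex $C_*(X',A;\Z G)$, with the coefficient system induced by $\phi$, yields $\bt_i(X',A;\N G)\le b_i(X',A;R)=0$ for $i=0,1$. Combining this with the long exact sequence of $(X',A)$ in $\N G$-homology and the additivity of the $L^2$-dimension gives
\[
\bt_1(X;\N G)=\bt_1(X';\N G)\le \bt_1(A;\N G).
\]

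Second I would prove $\bt_1(A;\N G)=0$ directly. Let $H:=\phi(\pi_1(A))\le G$; by the injectivity hypothesis $\pi_1(A)\cong H$, and $H$ is amenable as a subgroup of $G$. Since the universal cover $\tilde A$ is simply connected, $H_1(\tilde A;\Z)=0$, so in the hyperhomology spectral sequence of the $\Z H$-chain complex $C_*(\tilde A)$,
\[
E^2_{p,q}=\Tor^{\Z H}_p(\N G,H_q(\tilde A))\Longrightarrow H_{p+q}(A;\N G),
\]
the only contribution in total degree one is $E^2_{1,0}=\Tor^{\Z H}_1(\N G,\Z)$, and this entry survives to $E^\infty$ for degree reasons (no incoming or outgoing differential lands on it). Hence $\bt_1(A;\N G)=\ldim\Tor^{\Z H}_1(\N G,\Z)$. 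Using that $\N G$ is flat over $\N H$ and dimension-preserving for the inclusion $H\le G$, the standard change-of-rings formula yields $\ldim\Tor^{\Z H}_1(\N G,\Z)=\ldim\Tor^{\Z H}_1(\N H,\Z)=\bt_1(H)$, the first group $L^2$-Betti number of $H$. Finally $\bt_1(H)=0$: for infinite amenable $H$ this is Cheeger--Gromov's theorem, while for finite $H$ it is immediate because $\N H=\C H$ is flat over $\Z H$, so $\Tor^{\Z H}_1(\N H,\Z)=0$.

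The main obstacle is the $L^2$-dimensional bookkeeping in Step~2, specifically ensuring that the relevant entry of the hyperhomology spectral sequence genuinely contributes $\bt_1(H)$ to $\bt_1(A;\N G)$ and that the dimension is preserved under the subgroup change of rings from $\N H$ to $\N G$. An alternative route would avoid the spectral sequence by first reducing to the $2$-skeleton $A^{(2)}$ (higher cells can only lower $\bt_1$) and then comparing $A^{(2)}$ with a $K(H,1)$ built by attaching cells of dimension $\ge 3$; however, the spectral-sequence argument yields the cleanest identification with the group invariant $\bt_1(H)$ and thus with zero.
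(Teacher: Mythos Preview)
Your proof is correct and follows the same two-step structure as the paper: first bound $\bt_1(X)$ by $\bt_1(A)$ via Theorem~\ref{theorem:L2-dim-estimate-of-NG-homology}~(1) applied to $C_*(X,A;\Z G)$ together with the long exact sequence of the pair, then show $\bt_1(A)=0$ by a universal coefficient spectral sequence combined with an amenability vanishing result. The only difference is packaging: the paper runs the spectral sequence over $\C G$ (using that the $G$-cover of $A$ is a disjoint union of universal covers, so $H_1(A;\C G)=0$) and then invokes L\"uck's dimension-flatness of $\N G$ over $\C G$ for amenable $G$ \cite[Theorem~6.37]{Lueck:2002-1} directly, whereas you work over $\Z H$ and reduce to $\bt_1(H)=0$ via Cheeger--Gromov and the change of rings $\N H\subset\N G$; the underlying content is the same.
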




\begin{proof}
  By the assumption, $H_1(X,A;R)=0$.  By applying
  Theorem~\ref{theorem:L2-dim-estimate-of-NG-homology}~(1) to the
  chain complex $C_*(X,A;\Z G)$, we obtain $\bt_1(X,A)=0$.  From the
  $\N G$-coefficient homology long exact sequence for $(X,A)$, it
  follows that $\bt_1(X) \le \bt_1(A)$.  Since the induced map
  $\pi_1(A)\to G$ is injective, the $G$-cover of $A$ is a disjoint
  union of copies of the universal cover of~$A$.  Consequently
  $H_1(A;\C G)=0$.  By the universal coefficient spectral sequence,
  $H_1(A;\N G) = \Tor_1^{\C G} (\N G, H_0(A;\C G))$.  Since $G$ is
  amenable, $\bt_1(A) = \ldim \Tor_1^{\C G} (\N G, H_0(A;\C G))= 0$ by
  \cite[Theorem~6.37]{Lueck:2002-1}.
\end{proof}

\subsubsection*{Exteriors of two-component links with nonvanishing
  linking number}

\begin{theorem}
  \label{theorem:two-component-links-with-b^2_1=0}
  Suppose $L$ is a two component link with exterior $M$, and suppose
  either \textup{(i)} $R=\Q$ and $\lk(L)\ne 0$, or \textup{(ii)}
  $R=\Z/p$ and $\lk(L)$ is relatively prime to~$p$.  Suppose $G$ is an
  amenable group in $D(R)$.  If $\phi\colon \pi_1(M) \to G$ is a
  homomorphism which the abelianization $\pi_1(M)\to \Z^2$ factors
  through, then $\bt_1(M;\N G)=0$.
\end{theorem}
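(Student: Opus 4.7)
The plan is to apply Proposition~\ref{proposition:complexes-with-b^2_1=0} with the subcomplex $A$ chosen to be one of the two boundary tori of the link exterior. Write $\partial M = T_1 \sqcup T_2$ and pick the standard meridian/longitude generators $m_i, \lambda_i$ of $\pi_1(T_i) \cong \Z^2$, where $\lambda_i$ is the zero-framed longitude of the $i$th component. I would take $A = T_1$ with $A \to M$ the inclusion.

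The first item to verify is that $H_1(T_1; R) \to H_1(M; R)$ is surjective. Since $H_1(M; \Z) = \Z^2$ with basis $m_1, m_2$, the inclusion sends $m_1 \mapsto (1, 0)$ and $\lambda_1 \mapsto (0, \lk(L))$. In case~(i) with $R = \Q$ and $\lk(L) \ne 0$, the pair $(1,0), (0,\lk(L))$ spans $\Q^2$; in case~(ii) with $R = \Z/p$ and $\gcd(\lk(L), p) = 1$, the integer $\lk(L)$ reduces to a unit in $R$, so again these classes span $H_1(M; R) = (\Z/p)^2$.

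The second item to verify is that the composition $\pi_1(T_1) \to \pi_1(M) \to G$ is injective. By hypothesis $\phi$ admits a factorization $\pi_1(M) \xrightarrow{\phi} G \xrightarrow{\psi} \Z^2$ whose overall composite is the abelianization. Restricting $\psi \circ \phi$ to $\pi_1(T_1) = \Z^2$ yields the map $(a, b) \mapsto (a, \lk(L) \cdot b)$, which is injective as long as $\lk(L) \ne 0$ as an integer. Hence $\phi$ itself is injective on $\pi_1(T_1)$.

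With both hypotheses of Proposition~\ref{proposition:complexes-with-b^2_1=0} confirmed, the conclusion $\bt_1(M; \N G) = \bt_1(T_1; \N G) = 0$ follows directly. There is no real obstacle in this argument; the only point requiring care is matching the two parts of the linking number hypothesis to the two requirements of the proposition — $\lk(L)$ must be a unit in $R$ to ensure surjectivity on $H_1(-;R)$, and $\lk(L)$ must be a nonzero integer to ensure injectivity of the abelianization restricted to $\pi_1(T_1)$. Both conditions are furnished automatically by the stated cases (i) and (ii).
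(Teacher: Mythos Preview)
Your proof is correct and follows essentially the same approach as the paper: both take $A$ to be a boundary torus of $M$, verify the $H_1(-;R)$ surjectivity and the $\pi_1$-injectivity conditions of Proposition~\ref{proposition:complexes-with-b^2_1=0} using the linking number hypothesis, and conclude. Your argument is slightly more explicit in writing out the matrix of $\pi_1(T_1)\to\Z^2$, whereas the paper deduces injectivity by noting that this map of free abelian groups becomes an isomorphism after tensoring with~$R$; these are equivalent observations.
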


\begin{proof}
  Let $A$ be a (toral) boundary component of~$M$.  From the linking
  number condition, it follows that $H_1(A;R) \to H_1(M;R)$ is
  an isomorphism.  Also, the composition
  \[
  \Z^2=\pi_1(A)\to \pi_1(M) \xrightarrow{\text{ab}} \Z^2
  \]
  is injective, since tensoring it with $R$ one obtains $H_1(A;R) \to
  H_1(M;R)$.  Therefore by
  Proposition~\ref{proposition:complexes-with-b^2_1=0} we conclude
  that $\bt_1(M)=0$.
\end{proof}

We will investigate an application of
Theorem~\ref{theorem:amenable-signature-for-solvable-cobordism}~(2) to
this case in Section~\ref{section:concordance-to-hopf-link}.

\subsubsection*{Knot exteriors}

Proposition~\ref{proposition:complexes-with-b^2_1=0} also applies to
$(X,A)=(M,\mu)$, where $M$ is the exterior (or the zero-surgery
manifold) of a knot and $\mu$ is a meridian.  Indeed this case is none
more than \cite[Lemma 3.12]{Cha:2010-01}, as done in \cite[Proof of
Theorem~3.2]{Cha:2010-01}.  In the special case of a PTFA group $G$, a
similar result appeared earlier in \cite[Proposition
2.11]{Cochran-Orr-Teichner:1999-1}.

\subsection{Relationship with and generalizations of previously known
  results}
\label{subsection:relationship-with-previous-results}

Here we discuss some known results on $L^2$-signature obstructions as
special cases of
Theorem~\ref{theorem:amenable-signature-for-solvable-cobordism}.

\subsubsection*{Obstructions to knots being $n.5$-solvable}

In \cite{Cochran-Orr-Teichner:1999-1}, the notion of an $h$-solvable
knot was first introduced.  A knot $K$ is defined to be $h$-solvable
if its zero-surgery bounds a 4-manifold $W$ called an $h$-solution
(see \cite[Definitions 1.2, 8.5, 8,7]{Cochran-Orr-Teichner:1999-1}),
which is easily seen to be a spin $h$-solvable cobordism between the
exterior of $K$ and that of a trivial knot.  The following theorem,
which appeared in \cite{Cha:2010-01}, is an immediate consequence of
our Amenable Signature
Theorem~\ref{theorem:amenable-signature-for-solvable-cobordism} (see
also the last paragraph of
Section~\ref{subsection:vanishing-of-b^2_1}).

\begin{theorem}[{\cite[Theorem~1.3]{Cha:2010-01}}]
  \label{theorem:knot-solvability-amenable-obstruction}
  If $K$ is an $n.5$-solvable knot, $R=\Q$ or $\Z/p$, $G$ is an
  amenable group in $D(R)$, $G^{(n+1)}=\{e\}$, and $\phi\colon
  \pi_1(M(K))\to G$ is a homomorphism that sends a meridian to an
  infinite order element and extends to an $n.5$-solution, then
  $\rhot(M(K),\phi)=0$.
\end{theorem}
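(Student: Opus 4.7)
The plan is to reduce the statement to a direct application of Amenable Signature Theorem~\ref{theorem:amenable-signature-for-solvable-cobordism}~(I) by reinterpreting an $n.5$-solution as an $n.5$-solvable cobordism between bordered 3-manifolds.

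First I would translate the knot-solvability hypothesis into the bordered setup of Section~\ref{section:whitney-tower-and-concordance}. Since $M(K)=E_K\cup_\partial -E_U$ where $E_U=S^1\times D^2$ is the exterior of the unknot (gluing along the canonical zero-linking marking), an $n.5$-solution $W$ for $K$, with $\partial W=M(K)$, may be viewed as a relative cobordism between the bordered 3-manifolds $E_K$ and $E_U$. A straightforward check (matching the standard definition of an $h$-solution in \cite{Cochran-Orr-Teichner:1999-1} against Definition~\ref{definition:lagrangian-dual-solvable-cobordism}) shows that as a bordered cobordism $W$ is $n.5$-solvable in our sense: the $H_1$-cobordism condition reduces to $H_1(M(K))\cong H_1(W)$, and the required $(n+1)$-lagrangian with $n$-duals for $W$ is exactly the data of the $n.5$-solution. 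The map $\phi\colon\pi_1(M(K))\to G$ extends to $\pi_1(W)$ by hypothesis.

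Next I would verify the first $L^2$-Betti number hypothesis of case~(I). By the discussion in the paragraph entitled ``Knot exteriors'' in Section~\ref{subsection:vanishing-of-b^2_1}, Proposition~\ref{proposition:complexes-with-b^2_1=0} applies to the inclusion of a meridian $\mu\hookrightarrow E_K$: the map $H_1(\mu;R)\to H_1(E_K;R)$ is the standard isomorphism $R\xrightarrow{\cong}R$, hence surjective, and $\pi_1(\mu)=\Z\to G$ is injective because by hypothesis $\phi$ sends a meridian to an element of infinite order in $G$. Proposition~\ref{proposition:complexes-with-b^2_1=0} then yields $\bt_1(E_K;\N G)=0$.

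Finally, with $M=E_K$ and $M'=E_U$, all the hypotheses of Amenable Signature Theorem~\ref{theorem:amenable-signature-for-solvable-cobordism}~(I) are in place: $G$ is amenable in $D(R)$ with $G^{(n+1)}=\{e\}$, $\phi$ factors through $\pi_1(W)$, $W$ is an $n.5$-solvable cobordism, and $\bt_1(E_K;\N G)=0$. Applying that theorem gives
\[
\rhot(M(K),\phi)=\rhot(E_K\cup_\partial -E_U,\phi)=0,
\]
which is the desired conclusion. There is no serious obstacle here; the only point that requires any care is the first step, namely confirming that the original notion of an $n.5$-solution from \cite{Cochran-Orr-Teichner:1999-1} really is a special case of our bordered $n.5$-solvable cobordism, but this is a direct comparison of definitions.
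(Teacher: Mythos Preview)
Your proposal is correct and follows essentially the same route as the paper: the paper states that Theorem~\ref{theorem:knot-solvability-amenable-obstruction} is an immediate consequence of Amenable Signature Theorem~\ref{theorem:amenable-signature-for-solvable-cobordism}, noting (just before the theorem) that an $n.5$-solution is a spin $n.5$-solvable cobordism between $E_K$ and $E_U$, and referring to the ``Knot exteriors'' paragraph of Section~\ref{subsection:vanishing-of-b^2_1} for the vanishing of $\bt_1$. Your write-up simply makes these two sentences explicit.
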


We note that \cite[Theorem~3.2]{Cha:2010-01}, which is a slightly
stronger version of
Theorem~\ref{theorem:knot-solvability-amenable-obstruction}, is also a
consequence of
Theorem~\ref{theorem:amenable-signature-for-solvable-cobordism}.
Also, the following theorem of
Cochran-Orr-Teichner~\cite{Cochran-Orr-Teichner:1999-1} is a
consequence of our
Theorem~\ref{theorem:amenable-signature-for-solvable-cobordism} since
it follows from
Theorem~\ref{theorem:knot-solvability-amenable-obstruction} as pointed
out in~\cite{Cha:2010-01}:

\begin{theorem}[{\cite[Theorem~4.2]{Cochran-Orr-Teichner:1999-1}}]
  \label{theorem:knot-solvability-ptfa-obstruction}
  If $K$ is an $n.5$-solvable knot, $G$ is poly-torsion-free-abelian,
  $G^{(n+1)}=\{e\}$, and $\phi\colon \pi_1(M(K))\to G$ is a nontrivial
  homomorphism extending to an $n.5$-solution, then
  $\rhot(M(K),\phi)=0$.
\end{theorem}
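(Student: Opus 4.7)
The plan is to derive Theorem~\ref{theorem:knot-solvability-ptfa-obstruction} as a direct specialization of Theorem~\ref{theorem:knot-solvability-amenable-obstruction}. Two things must be checked: that the PTFA hypothesis on $G$ implies the amenable--$D(\Q)$ hypothesis, and that the mere nontriviality of $\phi$ is enough to force the meridian $\mu$ of $K$ to satisfy $|\phi(\mu)|=\infty$.

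For the first, I would observe that abelian groups are amenable and that amenability is closed under group extensions; since a poly-torsion-free-abelian group is by definition an iterated extension of torsion-free abelian groups, every PTFA group is amenable. Membership of PTFA groups in Strebel's class $D(\Q)$ is a classical result of Strebel~\cite{Strebel:1974-1}; this is in fact the property of PTFA group rings that underlies the injectivity/flatness arguments invoked throughout~\cite{Cochran-Orr-Teichner:1999-1}. So any PTFA group fits the hypothesis of Theorem~\ref{theorem:knot-solvability-amenable-obstruction} with $R=\Q$.

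For the second, I would use that $\pi_1(M(K))$ is normally generated by $\mu$. If $\phi(\mu)=e$, then the normal closure of $\mu$ lies in $\ker\phi$, forcing $\phi$ to be trivial and contradicting our hypothesis; hence $\phi(\mu)\ne e$. Since PTFA groups are torsion-free (torsion-freeness is preserved under extensions of torsion-free abelian groups), any nontrivial element of $G$ has infinite order, so $\phi(\mu)$ has infinite order. All remaining hypotheses of Theorem~\ref{theorem:knot-solvability-amenable-obstruction}---that $G^{(n+1)}=\{e\}$, that $\phi$ extends to an $n.5$-solution, and that $K$ is $n.5$-solvable---are identical to those assumed here.

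With these verifications, Theorem~\ref{theorem:knot-solvability-amenable-obstruction} applies verbatim and delivers $\rhot(M(K),\phi)=0$, which is precisely the desired conclusion. I do not anticipate any genuine obstacle: the analytic and $L^2$-dimensional substance has already been absorbed into Theorem~\ref{theorem:amenable-signature-for-solvable-cobordism} and its knot-theoretic corollary Theorem~\ref{theorem:knot-solvability-amenable-obstruction}, and the PTFA statement is simply the most classical instance, recovered by matching up hypotheses.
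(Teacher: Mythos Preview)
Your proposal is correct and follows exactly the route the paper indicates: the paper does not give a standalone proof but simply notes that Theorem~\ref{theorem:knot-solvability-ptfa-obstruction} follows from Theorem~\ref{theorem:knot-solvability-amenable-obstruction} (as already pointed out in~\cite{Cha:2010-01}), and your two verifications---that PTFA groups are amenable and in $D(\Q)$, and that nontriviality of $\phi$ forces the meridian to have infinite-order image since $\pi_1(M(K))$ is normally generated by $\mu$ and PTFA groups are torsion-free---are precisely the standard reductions needed.
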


\begin{remark}
  On the other hand, the homology cobordism result and concordance
  result given in \cite[Theorem 7.1]{Cha-Orr:2009-01} and
  \cite[Theorem~1.2]{Cha:2010-01} are potentially stronger than our
  Amenable Signature
  Theorem~\ref{theorem:amenable-signature-for-solvable-cobordism}; in
  particular these do \emph{not} require that the group $G$ is
  solvable.  It is an extremely interesting open question if certain
  non-solvable amenable signatures actually reveal something beyond
  solvable groups.
\end{remark}

\subsubsection*{Harvey's $\rho_n$-invariant and Whitney tower cobordism}

In this subsection we observe that the homology cobordism invariants
of Harvey~\cite{Harvey:2006-1}
are indeed invariant under Whitney tower cobordism.  

For a group $G$, Harvey defined a series of normal subgroups
$G=G^{(0)}_H \supset G^{(1)}_H \supset \cdots \supset G^{(n)}_H
\supset \cdots$ which is called the \emph{torsion-free derived
  series}~\cite{Cochran-Harvey:2004-1,Harvey:2006-1}.  A key theorem
of Harvey \cite[Theorem~4.2]{Harvey:2006-1} says the following: for a
closed 3-manifold $M$, $\rho_n(M):=\rhot(M, \pi_1(M) \to
\pi_1(M)/\pi_1(M)^{(n+1)}_H)\in \R$ is a homology cobordism invariant.
This can be strengthened as follows:






\begin{theorem}
  \label{theorem:whitney-tower-and-harvey-cha-orr-rho-invariant}
  Suppose $M$ and $M'$ are closed 3-manifolds.  Let $\bt_i(M)$ be the
  $L^2$-Betti number over $\N(\pi_1(M)/\pi_1(M)^{(n+1)}_H)$, and
  define $\bt_i(M')$ similarly.
  \begin{enumerate}
  \item If $M$ and $M'$ are height $n+1$ Whitney tower cobordant, then
    $\rho_n(M)=\rho_n(M')$.
  \item If $M$ and $M'$ are height $n.5$ Whitney tower cobordant and
    either $\bt_1(M)=0$ or
    \[
    b_1(M)\ne 0 \,\text{ and }\, \bt_1(M)+\bt_1(M') \ge b_1(M)+b_2(M)+b_3(M)-1,
    \]
    then $\rho_n(M) = \rho_n(M')$.  
  \end{enumerate}
\end{theorem}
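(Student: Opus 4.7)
The plan is to apply Amenable Signature Theorem~\ref{theorem:amenable-signature-for-solvable-cobordism} to the given Whitney tower cobordism $W$ between $M$ and $M'$, with coefficient group $\Gamma := \pi_1(W)/\pi_1(W)^{(n+1)}_H$. By construction of Harvey's torsion-free derived series, $\Gamma$ is poly-(torsion-free abelian), hence amenable, in $D(\Q)$, and satisfies $\Gamma^{(n+1)}=\{e\}$, making it a legitimate coefficient group for the Amenable Signature Theorem (with $R=\Q$).

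Next I would invoke the Cochran--Harvey functoriality property of the torsion-free derived series: any homomorphism of groups that induces an isomorphism on $H_1(-;\Z)$ induces an injection on every quotient $G/G^{(k)}_H$. Since a Whitney tower cobordism is in particular an $H_1$-cobordism, this applies to the inclusions $M\hookrightarrow W$ and $M'\hookrightarrow W$, yielding injections
\[
\pi_1(M)/\pi_1(M)^{(n+1)}_H \hookrightarrow \Gamma, \qquad
\pi_1(M')/\pi_1(M')^{(n+1)}_H \hookrightarrow \Gamma.
\]
Setting $\phi\colon \pi_1(M\cup_\partial -M')\to\Gamma$ to be the restriction of $\pi_1(W)\to\Gamma$, the standard induction property of Cheeger--Gromov invariants along subgroup inclusions and L\"uck's induction principle for $L^2$-Betti numbers then give
\[
\rhot(M\cup_\partial -M',\phi)=\rho_n(M)-\rho_n(M'), \qquad \bt_i(M;\N\Gamma)=\bt_i(M),
\]
and likewise for $M'$; in particular $\bt_1(M\cup_\partial -M';\N\Gamma)=\bt_1(M)+\bt_1(M')$. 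Hence proving $\rho_n(M)=\rho_n(M')$ reduces to showing $\rhot(M\cup_\partial -M',\phi)=0$.

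For part~(1), Remark~\ref{remark:whitney-tower-and-solvability-n-case} upgrades a height $n+1$ Whitney tower cobordism to an $(n+1)$-solvable cobordism, so case~(III) of Amenable Signature Theorem immediately delivers the vanishing. For part~(2), Theorem~\ref{theorem:whitney-tower-and-solvability} gives that $W$ is an $n.5$-solvable cobordism; the first sub-hypothesis $\bt_1(M)=0$ then puts us in case~(I), while under the other sub-hypothesis, $b_1(M)\ne 0$ forces the torsion-free abelianization of $\pi_1(M)$ to have positive rank, so $\pi_1(M)/\pi_1(M)^{(n+1)}_H$ is infinite and $|\phi(\pi_1(M))|=\infty$, while the stated Betti inequality is verbatim the $L^2$-Betti hypothesis of case~(II). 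The main technical obstacle is verifying the Cochran--Harvey functoriality/injectivity step in the relatively weak $H_1$-cobordism setting (Harvey's original homology cobordism invariance used the stronger homology cobordism hypothesis), together with carefully matching coefficient systems so that the $L^2$-invariants over $\Gamma$ coincide with those defined intrinsically via $\pi_1(M)/\pi_1(M)^{(n+1)}_H$ in the statement.
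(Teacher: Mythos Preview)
Your overall strategy coincides with the paper's: set $\Gamma=\pi_1(W)/\pi_1(W)^{(n+1)}_H$, establish that $\pi_1(M)/\pi_1(M)^{(n+1)}_H$ and $\pi_1(M')/\pi_1(M')^{(n+1)}_H$ inject into $\Gamma$, use $L^2$-induction to identify $\rho_n(M)$, $\rho_n(M')$ and the relevant $L^2$-Betti numbers with their $\N\Gamma$-counterparts, and then invoke case (III), (I), or (II) of the Amenable Signature Theorem after converting to a solvable cobordism via Theorem~\ref{theorem:whitney-tower-and-solvability} and Remark~\ref{remark:whitney-tower-and-solvability-n-case}.

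The one genuine gap is the injectivity step, which you yourself flag as the ``main technical obstacle'' but do not resolve. The functoriality you state --- that an isomorphism on $H_1(-;\Z)$ alone forces injectivity on all quotients $G/G^{(k)}_H$ --- is false (e.g.\ the abelianization $F_2\to\Z^2$ is an $H_1$-isomorphism but certainly does not inject on $F_2/F_2^{(2)}_H$). The Dwyer-type theorem of Cochran--Harvey that the paper cites requires, in addition to the $H_1$-isomorphism, an $H_2$ condition: roughly, that the cokernel of $H_2(M)\to H_2(W)$ be represented by $n$-surfaces. The paper secures this by \emph{first} replacing the Whitney tower cobordism by an $h$-solvable cobordism $W$, and then observing that the existence of an $n$-lagrangian with $n$-duals makes
\[
H_2\big(W;\Z[\pi_1(W)/\pi_1(W)^{(n)}]\big)\longrightarrow H_2(W)
\]
surjective; since $\pi_1(W)^{(n)}\subset\pi_1(W)^{(n)}_H$, this is exactly what the Cochran--Harvey theorem needs. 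In other words, the solvable-cobordism structure is used twice --- once to feed the Amenable Signature Theorem, and once (which your outline omits) to verify the $H_2$ hypothesis underpinning the injectivity into~$\Gamma$. Once you reorder the argument so that the passage to a solvable cobordism comes first and supply this $H_2$ verification, the rest of your sketch goes through verbatim and matches the paper.
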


\begin{proof}
  We will prove (1) and (2) simultaneously.  By
  Theorem~\ref{theorem:whitney-tower-and-solvability} and
  Remark~\ref{remark:whitney-tower-and-solvability-n-case}, there is
  an $h$-solvable cobordism $W$ between $M$ and~$M'$, where $h=n+1$ in
  case of (1) and $h=n.5$ in case of~(2).  We have $H_1(M)\cong
  H_1(W)$.  Also $H_2(W;\Z[\pi_1(W)/\pi_1(W)^{(n)}])\rightarrow
  H_2(W)$ is surjective, since there is an $n$-lagrangian admitting
  $n$-duals.  Since $\pi_1(W)^{(n)} \subset \pi_1(W)^{(n)}_H$, we can
  apply the Dwyer-type injectivity theorem
  \cite[Theorem~2.1]{Cochran-Harvey:2006-01} to $\pi_1(M)\to \pi_1(W)$
  to conclude that the quotient $\pi_1(M)/\pi_1(M)^{(n+1)}_H$ injects
  into $\Gamma:=\pi_1(W)/\pi_1(W)^{(n+1)}_H$ under the
  inclusion-induced map.  By the $L^2$-induction property (for
  example, see \cite[p.~8~(2.3)]{Cheeger-Gromov:1985-1},
  \cite[Proposition~5.13]{Cochran-Orr-Teichner:1999-1}), it follows
  that $\rho_n(M) = \rhot(M,\pi_1(M)\to \Gamma)$ and
  $\bt_i(M)=\bt_i(M,\N\Gamma)$.  Similarly for~$M'$.

  Our $\Gamma$ satisfies $\Gamma^{(n+1)}=\{e\}$, and is known to be
  amenable and in~$D(\Q)$.  Also, $b_1(M)\ne 0$ implies
  $|\Gamma|=\infty$.  Therefore by applying
  Theorem~\ref{theorem:amenable-signature-for-solvable-cobordism}
  (III) and (II), it follows that $\rhot(M,\pi_1(M)\to \Gamma) =
  \rhot(M',\pi_1(M')\to \Gamma)$ from the hypothesis of (1) and (2)
  respectively.
\end{proof}

\subsubsection*{Harvey's $\rho_n$-invariants and $h$-solvable links}

Harvey and Cochran-Harvey also gave $\rho_n$-invariant obstructions to
being $h$-solvable \cite[Theorem~6.4]{Harvey:2006-1},
\cite[Theorem~4.9, Corollary~4.10]{Cochran-Harvey:2006-01}.  Their
relationship with our Amenable Signature
Theorem~\ref{theorem:amenable-signature-for-solvable-cobordism} is
best illustrated in case of links, as discussed below.

The notion of an $h$-solution of a link $L$ in
\cite{Cochran-Orr-Teichner:1999-1} is related to a spin $h$-solvable
cobordism between the exterior $E_L$ of $L$ and the trivial link
exterior~$E_O$, similarly to the knot case, though the details are
slightly more technical.  We give an outline below, omitting details.
Let $N$ be the exterior of the standard slice disks in $D^4$ for a
trivial link $O$.  Given a spin $h$-solvable cobordism $W$ between
$E_L$ and $E_O$, it can be seen that $V:=W\cup_{E_O} N$ is an
$h$-solution for $L$ in the sense of
\cite{Cochran-Orr-Teichner:1999-1} by a straightforward computation of
$H_1$ and~$H_2$.  Conversely, if $V$ is an $h$-solution for~$L$, it
turns out that there is an embedding of $N$ into $V$ in such a way
that $W:=\overline{V-N}$ is an $h$-solvable cobordism between $E_L$
and~$E_O$.  In fact, if one chooses disjoint arcs $\gamma_i$ in $V$
joining a fixed basepoint in $\inte(V)$ to a meridian $\mu_i$ of the
$i$th component of $L$, then a regular neighborhood of $\bigcup_i
(\gamma_i \cup \mu_i)$ is homeomorphic to~$N$.

Now suppose $L$ has $m$ components and $\pi_1(W)\to G$ is given as in
Theorem~\ref{theorem:amenable-signature-for-solvable-cobordism}.
Then, it turns out that the Betti number condition in
Theorem~\ref{theorem:amenable-signature-for-solvable-cobordism} (II)
is equivalent to $\bt_1(M_L) \ge m-1$, if the image of each meridian
of $L$ in $G$ has infinite order.  So, for $G$ PTFA, one recovers the
Cochran-Harvey rank condition in \cite[Theorem~4.9,
Corollary~4.10]{Cochran-Harvey:2006-01}.  Indeed, in our Betti number
condition $\bt_1(E_L\cup_\partial E_O) \ge
b_1(E_O;R)+b_2(E_O;R)+b_3(E_O;R)-1$, one can show that $b_1(E_O)=m$,
$b_2(E_O)=m-1$, and furthermore $\bt_1(E_L\cup_\partial
E_O)=\bt_1(E_L)+\bt_1(E_O)$ and $\bt_1(E_L)=\bt_1(M_L)$,
$\bt_1(E_O)=m-1$.  From this it follows that our Amenable Signature
Theorem~\ref{theorem:amenable-signature-for-solvable-cobordism} (II)
specializes to the $\rho_n$-invariant obstruction to links being
$n.5$-solvable \cite[Corollary~4.10]{Cochran-Harvey:2006-01}.



\section{Grope and Whitney tower concordance to the Hopf link}
\label{section:concordance-to-hopf-link}

\def\Lh{H}

In this section we give an application to concordance of links with
nonvanishing linking number.  Our goal is to prove the following
result:

\begin{theorem}
  \label{theorem:whitney-tower-concordance-to-hopf-link}
  For any integer $n>2$, there are links with two unknotted components
  which are height $n$ grope concordant (and consequently height $n$
  Whitney tower concordant) to the Hopf link, but not height $n.5$
  Whitney tower concordant (and consequently not height $n.5$ grope
  concordant) to the Hopf link.
\end{theorem}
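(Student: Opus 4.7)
The plan is to construct explicit links $L_n$ by iterated infection on the Hopf link $H$, verify the grope concordance statement by standard satellite arguments, and derive the negative statement from Amenable Signature Theorem~\ref{theorem:amenable-signature-for-solvable-cobordism}~(I), combined with the vanishing of first $L^2$-Betti numbers provided by Theorem~\ref{theorem:two-component-links-with-b^2_1=0}.

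First I would iteratively infect $H$ along curves $\eta_0,\ldots,\eta_{n-1}$ chosen as follows. Set $J_0 := H$, and define $J_{i+1}$ by infecting $J_i$ along a curve $\eta_i$ lying in $\pi_1(E_{J_i})^{(i)}$, by an auxiliary knot (for $i<n-1$); perform the final infection at $\eta_{n-1}\in \pi_1(E_{J_{n-1}})^{(n-1)}$ by a knot $K$ with nonzero integral $L^2$-signature $\rhot(M(K))$. Set $L_n := J_n$. Each $\eta_i$ is chosen to lie in a small ball disjoint from the components of $J_i$, so that these components remain unknotted throughout. A standard iterated-satellite argument (as in Cochran-Orr-Teichner~\cite{Cochran-Orr-Teichner:1999-1} and Cochran-Harvey-Leidy~\cite{Cochran-Harvey-Leidy:2008-1}) then assembles an annular grope of height $n$ in $S^3\times[0,1]$ exhibiting $L_n$ as height $n$ grope concordant to $H$; by Theorem~\ref{theorem:grope-implies-whitney-tower}, this also gives height $n$ Whitney tower concordance.

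For the obstruction, suppose toward contradiction that $L_n$ is height $n.5$ Whitney tower concordant to $H$. By Theorem~\ref{theorem:link-whitney-tower-concobordism}, $E_{L_n}$ and $E_H$ are height $(n-2).5$ Whitney tower cobordant, and by Theorem~\ref{theorem:whitney-tower-and-solvability} this yields an $(n-2).5$-solvable cobordism $W$. Let $\Gamma=\pi_1(W)$ and take $G=\Gamma/\Gamma^{(n-1)}_H$, where $\Gamma^{(k)}_H$ denotes Harvey's torsion-free derived series. Then $G$ is poly-torsion-free-abelian—hence amenable and in $D(\Q)$—with $G^{(n-1)}=\{e\}$, and because $W$ is an $H_1$-cobordism, the canonical composite $\pi_1(E_{L_n})\to G\to G^{\mathrm{ab}}=\Z^2$ coincides with the abelianization of $\pi_1(E_{L_n})$. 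Hence Theorem~\ref{theorem:two-component-links-with-b^2_1=0} applies (using $\lk(L_n)=1$) to yield $\bt_1(E_{L_n};\N G)=0$, and then Amenable Signature Theorem~\ref{theorem:amenable-signature-for-solvable-cobordism}~(I) forces $\rhot(E_{L_n}\cup_\partial E_H,\phi)=0$, where $\phi$ denotes the induced representation.

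To reach a contradiction I would compute this $\rhot$ directly from the infection construction. Attach to $W$ the ``standard infection cobordism'' $V$ built from copies of $E_K\times[0,1]$ glued along the infection tori: its other boundary component is, up to terms contributing zero to $\rhot$, a disjoint union of zero-surgery manifolds $M(K)$. By cobordism additivity of $\rhot$, $\rhot(E_{L_n}\cup_\partial E_H,\phi)$ equals a signed sum of terms $\rhot(M(K),\psi_i)$, each of which computes the integral $L^2$-signature $\rhot(M(K))$ whenever the meridian of $K$ has infinite order under $\psi_i$. The main obstacle is precisely this non-triviality step: one must verify, by an inductive dimension-counting argument, that for every $(n-2).5$-solvable cobordism $W$ arising from the hypothetical Whitney tower concordance, each infection meridian survives with infinite order in $G$. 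This follows the familiar Cochran-Orr-Teichner template, using as its essential tool the Dwyer-type injectivity theorem for Harvey's torsion-free derived series invoked in the proof of Theorem~\ref{theorem:whitney-tower-and-harvey-cha-orr-rho-invariant}. With non-triviality in hand, choosing $K$ so that $|\rhot(M(K))|$ exceeds the universal bound on the remaining $\rhot$-contributions yields the required contradiction.
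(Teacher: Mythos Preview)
Your proposal captures the overall architecture---iterated infection, grope concordance via satellite, and the obstruction via Amenable Signature Theorem~\ref{theorem:amenable-signature-for-solvable-cobordism}~(I) combined with Theorem~\ref{theorem:two-component-links-with-b^2_1=0}---but the nontriviality step, which you correctly flag as the main obstacle, is where the real content of the proof lives, and the tool you name is not adequate.

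The Dwyer-type injectivity theorem only \emph{transfers} nontriviality: if a curve is already known to be nontrivial in $\pi_1(M)/\pi_1(M)^{(k)}_H$ for a boundary component $M$, then it remains nontrivial in $\pi_1(W)/\pi_1(W)^{(k)}_H$. It does not establish that the infection meridian is nontrivial to begin with, and here $M(K)$ is not even a boundary component of $W$ until you glue on the infection cobordisms, after which the $H_2$-surjectivity hypothesis needed for Dwyer injectivity is no longer available. In the paper the nontriviality (Theorem~\ref{theorem:meridian-under-mixed-coefficient-quotient}) is obtained instead by a higher-order Blanchfield pairing argument: one proves that each enlarged cobordism $W_k$ is a \emph{Blanchfield bordism} (Definition~\ref{definition:blanchfield-bordism}, Theorem~\ref{theorem:betti-number-and-blanchfield-bordism}), so that $\Ker\{tH_1(\partial;RG)\to tH_1(W_k;RG)\}$ is self-annihilating under the Blanchfield form, and then uses that the infection curve generates a nontrivial torsion Alexander module on which the form is nondegenerate. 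This occupies all of Section~\ref{subsection:Blanchfield-bordism} and requires the new Blanchfield-bordism notion precisely because the rank hypotheses of the Cochran--Harvey--Leidy $n$-bordism framework do not hold for these link exteriors.

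There is a related concrete obstruction in your construction: you begin infection on the Hopf link itself, but $H_1(E_H;\Z[x^{\pm1},y^{\pm1}])=0$. Any curve in the commutator subgroup of $\pi_1(E_H)$ is therefore trivial in the Alexander module, and the Blanchfield argument at the base of the induction has nothing to act on. The paper circumvents this by starting from a seed link $L_0$ which is concordant to $H$ yet has nonzero cyclic Alexander module generated by the infection curve $\eta$ (Lemma~\ref{lemma:properties-of-seed-link}(3)), and by choosing seed knots $(K_k,\alpha_k)$ with the analogous property (Lemma~\ref{lemma:seed-knot}) at each subsequent stage. Without these algebraic hypotheses built into the construction, there is no mechanism forcing the meridian of the final infection knot to survive in the relevant solvable quotient of $\pi_1(W_0)$.
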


We remark that the blow-down technique, namely performing
$(\pm1)$-surgery along one component and then studying the concordance
of the resulting knot, might be useful in showing that our links in
Theorem~\ref{theorem:whitney-tower-concordance-to-hopf-link} are not
slice.  Nonetheless, it is unknown whether the blow-down technique
could yield any interesting conclusion about the height of Whitney
towers and gropes as our method does.

\subsection{Satellite construction and capped gropes}

To construct our example, we will use a standard satellite
construction (often called infection) described as follows: let $L$ be
a link in $S^3$, and $\eta$ be an unknotted circle in $S^3$ which is
disjoint from~$L$.  Remove a tubular neighborhood of $\eta$ from
$S^3$, and then attach the exterior of a knot $J$ along an orientation
reversing homeomorphism on the boundary that identifies the meridian
and 0-longitude of $J$ with the 0-longitude and meridian of~$\eta$,
respectively.  The resulting 3-manifold is again homeomorphic to
$S^3$, and the image of $L$ under this homeomorphism is a new link in
$S^3$, which we denote by~$L(\eta,J)$.  We note that the same
construction applied to a framed circle $\eta$ embedded in a
3-manifold $M$ gives a new 3-manifold, which we denote by $M(\eta,J)$.

Recall that a \emph{capped grope} is defined to be a grope with
2-disks attached along each standard symplectic basis curves of the
top layer surfaces (see, e.g.,
\cite[Chapter~2]{Freedman-Quinn:1990-1}).  These additional 2-disks
are called the \emph{caps}, and the remaining grope part is called the
\emph{body}.  We remark that an embedded capped grope in this paper
designates a capped grope embedded in a 4-manifold.  In particular not
only the body but also all caps are embedded, while capped gropes with
immersed caps are often used in the literature.

Our construction of grope concordance depends on the following
observation.  For convenience, we use the following terms.  Recall
that we denote the exterior of a link $L$ by~$E_L$.

\begin{definition}
  We call $(L,\eta)$ a \emph{satellite configuration of height $n$} if
  $L$ is a link in $S^3$, $\eta$ is an unknotted circle in $S^3$
  disjoint to $L$, and the 0-linking parallel of $\eta$ in
  $E_\eta=E_\eta\times 0$ bounds a height $n$ capped grope $G$
  embedded in $E_\eta\times [0,1]$ with body disjoint to
  $L\times[0,1]$.  We call $G$ a \emph{satellite capped grope
    for~$(L,\eta)$}.
\end{definition}

We remark that by definition a satellite configuration $(L,\eta)$ of
height zero is merely a link $L$ with an unknotted curve $\eta$
disjoint to~$L$.


\begin{proposition}[Composition of satellite configurations]
  \label{proposition:composition-satellite-conf}
  Suppose $(L,\eta)$ is a satellite configuration of height $n$, and
  $(K,\alpha)$ is a satellite configuration of height $m>0$ with $K$ a
  knot.  Let $L'=L(\eta,K)$.  Then, viewing $\alpha$ as a curve in
  $E_{L'}$ via $\alpha\subset E_K \subset E_K\cup E_{L\cup\eta} =
  E_{L'}$, $(L',\alpha)$ is a satellite configuration of height $n+m$.
\end{proposition}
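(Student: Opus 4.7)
The plan is to construct the required height $n+m$ satellite capped grope for $(L',\alpha)$ by splicing a fresh copy of $G_L$ into each intersection of a cap of $G_K$ with $K\times[0,1]$, using the satellite identification on the gluing torus. Here $G_K$ denotes a satellite capped grope of height $m$ for $(K,\alpha)$ and $G_L$ one of height $n$ for $(L,\eta)$.

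First observe that $\alpha$ is unknotted in the new $S^3$: its exterior there decomposes as $E_\alpha^{\mathrm{new}} = E_{K\cup\alpha} \cup_T E_\eta$, where the gluing torus $T=\partial N(K)=\partial N(\eta)$ identifies $\mu_K\leftrightarrow\lambda_\eta$ and $\lambda_K\leftrightarrow\mu_\eta$. Since $E_\eta$ is a solid torus with meridian disk bounded by $\lambda_\eta=\mu_K$, attaching it along $T$ merely reinstates $N(K)$ inside the original exterior of $\alpha$, which is itself a solid torus. Correspondingly, $E_\alpha^{\mathrm{new}}\times[0,1]$ splits as $(E_{K\cup\alpha}\times[0,1]) \cup (E_\eta\times[0,1])$ glued along $T\times[0,1]$, and $L'\times[0,1]$ lies entirely in the $E_\eta\times[0,1]$ piece, since $L'$ is just $L$ viewed inside the $E_\eta$ factor.

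The body of $G_K$ is disjoint from $K\times[0,1]$ by hypothesis, so it already sits inside $E_{K\cup\alpha}\times[0,1]\subset E_\alpha^{\mathrm{new}}\times[0,1]$. By transversality each cap of $G_K$ meets $K\times[0,1]$ in finitely many points; remove a small disk of the cap around each one. The punctured caps still lie in $E_{K\cup\alpha}\times[0,1]$, and each new inner boundary is a meridian of $K$ on some slice $T\times\{t\}$, which under the satellite identification is a $0$-longitude of $\eta$ on $T\times\{t\}$. Along each such inner boundary, glue a disjoint copy of $G_L$ placed inside a thin product sub-slab of $E_\eta\times[0,1]$; four-dimensional general position allows the copies to be made mutually disjoint and arranged so their boundaries match the inner boundaries exactly.

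To verify the height, each top-stage basis curve $\gamma$ of $G_K$'s body now bounds, inside the new $4$-manifold, a capped grope of height $n$ based on $\gamma$: if the original cap did not meet $K$, then the cap itself (a disk) is trivially a capped grope of any height, since a genus-$0$ surface has no basis curves and requires no sub-gropes; if the cap did meet $K$, then the punctured cap together with the first stages of the attached copies of $G_L$ forms a surface whose symplectic basis is inherited from those first stages, and each inherited basis curve bounds a height $n-1$ capped grope supplied by the rest of $G_L$. Combined with the height $m$ body of $G_K$, this yields a capped grope of height $m+n$ bounded by the $0$-longitude of $\alpha$. Its body is disjoint from $L'\times[0,1]$: the parts lying in $E_{K\cup\alpha}\times[0,1]$ are disjoint from $L'\times[0,1]\subset E_\eta\times[0,1]$, and the bodies of the attached $G_L$ copies are disjoint from $L\times[0,1]=L'\times[0,1]$ by hypothesis on $G_L$; the caps come from those of $G_L$ and are permitted to meet $L'\times[0,1]$. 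The main technical point to carry out carefully is the recursive bookkeeping of grope height, relying on the fact that a disk is a symmetric grope of any positive height.
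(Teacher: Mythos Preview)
Your proof is correct and follows essentially the same approach as the paper's: both remove the intersections of the caps of the height-$m$ grope $G_K$ with $K\times[0,1]$ and splice in parallel copies of the height-$n$ grope $G_L$ along the resulting meridional boundaries, using the decomposition $S^3_{\mathrm{new}}\times[0,1]=(E_\eta\times[0,1])\cup_{T\times[0,1]}(E_K\times[0,1])$. You supply more detail than the paper does---verifying that $\alpha$ is unknotted in the new $S^3$, tracking the height count through the punctured caps, and checking disjointness of the body from $L'\times[0,1]$---but the construction is the same.
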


\begin{proof}
  Suppose $H$ is a satellite capped grope of height $m$ for
  $(K,\alpha)$.  We may assume that the intersection of $H$ with a
  tubular neighborhood of $K\times[0,1]$ consists of disjoint disks
  $D_1,\ldots,D_r$ lying on caps of~$H$ and $\partial D_i$ is of the
  form $\mu\times t_i$, where $\mu$ is a fixed meridian of $K$ and
  $t_i\in(0,1)$ are distinct points.

  Suppose $G$ is a satellite capped grope for $(L,\eta)$.  Let $U$ be
  the union of $r$ parallel copies of $G$ in $E_\eta\times[0,1]$ such
  that the boundary of $U$ is $\bigcup_i (\text{parallel copy of
  }\eta)\times t_i$.  Let $V=H \cap (E_K\times [0,1])$.  Note that the
  boundary of $V$ is $\bigcup_i \mu\times t_i$ and $\mu$ is identified
  with a parallel copy of $\eta$ under the satellite construction.
  Now
  \[
  \def\cupo#1{\cup}
  U \cupo{\eta\times t_i = \mu\times t_i} V \subset \big(E_\eta
  \times[0,1]\big) \cupo{\partial E_\eta\times[0,1] = \partial E_K
    \times[0,1]} \big (E_K\times[0,1]\big) \cong S^3 \times[0,1]
  \]
  is a desired satellite capped grope of height $n+m$ for
  $(L',\alpha)$.
\end{proof}

We remark that the construction used above may be compared to
\cite[Section~3]{Horn:2010-1}.

\subsection{Building blocks}

\subsubsection*{A seed link}

We start with a two-component link $L_0$ given in
Figure~\ref{figure:seed-link}.  (The curve $\eta$ and the dotted arc
$\gamma$ are not parts of~$L_0$.)

\begin{figure}[h]
  \labellist
  \small\hair 0mm
  \pinlabel {$\gamma$} [r] at 230 105
  \pinlabel {$\eta$} [r] at 100 15
  \pinlabel {$+a$ full twists} [l] at 158 5
  \pinlabel {$-a$ full twists} [l] at 158 205
  \endlabellist
  \includegraphics[scale=.6]{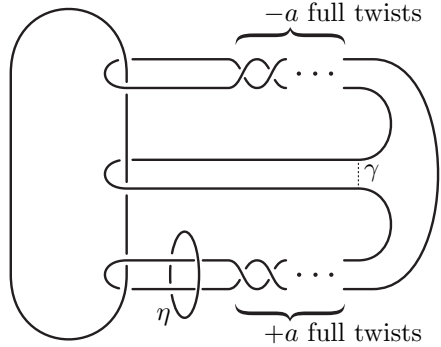}
  \caption{A link which is concordant to the Hopf link}
  \label{figure:seed-link}
\end{figure}

The following properties of $L_0$ and $\eta$ in
Figure~\ref{figure:seed-link} will be crucially used in this section.
In fact any $(L_0,\eta)$ with these properties can be used in place of
our $(L_0,\eta)$.

\begin{lemma}
  \label{lemma:properties-of-seed-link}
  \leavevmode\Nopagebreak
  \begin{enumerate}
  \item The link $L_0$ is concordant to the Hopf link.
  \item $(L_0,\eta)$ is a satellite configuration of height one.
  \item For $a\ne 0$, the Alexander module
    $A=H_1(E_{L_0};\Z[x^{\pm1},y^{\pm1}])$ of~$L_0$ is a nonzero
    $\Z[x^{\pm1},y^{\pm1}]$-torsion module generated by the homology
    class of~$\eta$.
  \end{enumerate}
\end{lemma}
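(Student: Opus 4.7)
The plan for the three parts is as follows.

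Part (1). The dotted arc $\gamma$ in the figure connects the $+a$-twist region to the $-a$-twist region. Performing a band sum (saddle move) along $\gamma$ — that is, attaching a band whose core is $\gamma$ — allows the cancelling pair of twist regions to be removed, yielding a split link $H\sqcup U$, where $H$ is the Hopf link and $U$ is a small unknotted component lying in a 3-ball disjoint from~$H$. Capping $U$ off by a standard 2-disk then gives a relative cobordism in $S^3\times[0,1]$ from $L_0$ at the bottom to the Hopf link at the top. The saddle contributes $-1$ to the Euler characteristic of the cobordism and the cap contributes $+1$, so the total is $\chi=0$; a check of component structure shows that it consists of two disjointly embedded annuli joining the two components of $L_0$ to the two components of the Hopf link. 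This is the required concordance.

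Part (2). The unknot $\eta$ bounds an obvious flat disk $D$ in $S^3$. The strands of $L_0$ passing through the $+a$-twist region intersect $D$ transversely; the intersections come in pairs along each component of~$L_0$. I will tube these intersection points in pairs along the components of $L_0$ to obtain an oriented surface $\Sigma$ with $\partial\Sigma=\eta$ that is disjoint from $L_0$, and push $\Sigma$ into the collar $E_\eta\times[0,1]$. A symplectic basis for $H_1(\Sigma)$ is given by the meridional circles $\alpha_i$ of $L_0$ lying on the tubes, and their duals $\beta_i$ running longitudinally along the tubes and closing up through $\Sigma$. Each $\alpha_i$ bounds a standard meridional 2-disk of $L_0$ placed away from~$\eta$; each $\beta_i$ bounds a 2-disk built from the portion of $D$ cut off by the tubing operation. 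These $2g$ disks provide the caps of a height-$1$ capped grope for $\eta$ in $E_\eta\times[0,1]$. Finally, the cancellation of $+a$ and $-a$ twists is exactly what is needed to make $\Sigma$ induce the $0$-framing on~$\eta$, so that the grope is based on the 0-linking parallel of~$\eta$.

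Part (3). I will compute $A=H_1(E_{L_0};\Z[x^{\pm1},y^{\pm1}])$ from a Wirtinger presentation of $\pi_1(E_{L_0})$ in which $x,y$ are distinguished meridians of the two components. The Wirtinger relators arising from the two twist boxes, after Fox differentiation and reduction to the abelianization, give a presentation matrix for $A$. Using elementary row and column operations (the arcs in the planar part of the diagram contribute trivial relations on the generators, while the twist boxes contribute the nontrivial ones), the matrix reduces to a $1\times1$ matrix $[p(x,y)]$ with $p\in \Z[x^{\pm1},y^{\pm1}]$ depending linearly on~$a$ and nonzero precisely when $a\ne 0$. This shows $A$ is cyclic. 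Since $\eta$ is null-homologous in $E_{L_0}$ it lifts to the universal abelian cover and represents a well-defined class in~$A$; comparing this lift with the generator produced by the reduction identifies the class of $\eta$ as the generator of~$A$. Because $p(x,y)$ annihilates $A$ and $p\ne 0$ for $a\ne 0$, $A$ is a nonzero $\Z[x^{\pm1},y^{\pm1}]$-torsion module.

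The main obstacle is part~(2): one must confirm that the dual curves $\beta_i$ bound \emph{embedded}, \emph{disjoint} caps in $E_\eta\times[0,1]$ (not merely null-homotopic loops), and that all caps and the body $\Sigma$ are simultaneously framed in the correct sense. This requires a careful analysis of the interaction between the two twist regions, and the symmetric placement of the $\pm a$ twists is essential both for the framing of $\Sigma$ and for arranging the $\beta_i$'s null-homologies in $S^3\setminus\eta$.
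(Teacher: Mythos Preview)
For parts (1) and (2) your plan matches the paper's approach, but your treatment of (2) overcomplicates the geometry. In the paper the obvious disk for $\eta$ is tubed once to produce a genus-\emph{one} surface in $S^3$ disjoint from $L_0$, and the two symplectic basis curves are capped by disks meeting the left and right components of $L_0$ once each. The $\pm a$ twist boxes play no role here: since $\Sigma$ is a Seifert surface for the unknot $\eta$ it automatically induces the $0$-framing, and with only two caps the disjointness issues you flag as ``the main obstacle'' do not arise.

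For (3) you take a genuinely different route. The paper replaces the $\pm a$ twist boxes by $(\pm 1/a)$-surgery curves, so that $L_0$ becomes the standard Hopf link decorated with two surgery curves; lifting these curves to the universal abelian cover $\R^2\times[0,1]$ of the Hopf link exterior and reading off linking numbers gives an explicit $2\times2$ presentation matrix, yielding $A\cong\Z[x^{\pm1},y^{\pm1}]/\langle f\overline f\rangle$ with $f=a(x+y^{-1}-xy^{-1}-1)+1$. The generator produced is the meridian $\mu_v$ of the $(-1/a)$-surgery curve, and since $\eta$ is isotopic to the zero-longitude of that curve the surgery relation gives $a\cdot[\eta]=\mu_v$, whence $[\eta]$ generates. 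Your Fox-calculus approach could in principle work, but your assertion that the $1\times1$ presentation $[p(x,y)]$ depends \emph{linearly} on $a$ is wrong---the Alexander polynomial $f\overline f$ is quadratic in $a$---and the phrase ``comparing this lift with the generator produced by the reduction'' is not an argument. The identification of $[\eta]$ as a generator is the nontrivial step in (3) and needs a concrete computation, which the paper's surgery picture supplies directly.
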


In (3) above, the variables $x$ and $y$ correspond to the right and
left components in Figure~\ref{figure:seed-link}, respectively.

\begin{proof}
  (1) Applying a saddle move along the dotted arc $\gamma$ (see
  Figure~\ref{figure:seed-link}), the right component splits into two
  new components.  One of these (which is the ``broken middle tine'')
  forms a Hopf link together with the left component of~$L_0$.  The
  other new component is easily isotoped to a separated unknotted
  circle since the $\pm a$ twistings are now eliminated.  This gives a
  concordance in $S^3\times[0,1]$ consisting of two annuli, one of
  which is a straight product of the left component of $L_0$ and
  $[0,1]$, and another annulus has one saddle point and one local
  maximum.

  (2) By tubing the obvious 2-disk, it is easily seen that $\eta$
  bounds an embedded genus one surface in the 3-space which is
  disjoint to~$L_0$.  In addition one can attach two caps which meet
  the left and right components of $L_0$ once, respectively.  This
  gives a desired satellite capped grope of height one.

  (3) A straightforward homology computation shows that $L_0$ has
  Alexander module
  \[
  H_1(E_{L_0};\Z[x^{\pm1},y^{\pm1}])= \Z[x^{\pm1},y^{\pm1}]/\langle
  f\overline f\rangle
  \]
  generated by $[\eta]$, where $f=a(x+y^{-1}-xy^{-1}-1)+1$.  Details
  are as follows.

  The link $L_0$ can be represented as the leftmost diagram in
  Figure~\ref{figure:seed-link-as-surgery}, where $(\pm 1/a)$-surgery
  curves are used instead of the $\pm a$ twists.  By isotopy, we
  obtain the rightmost surgery diagram in
  Figure~\ref{figure:seed-link-as-surgery} with $L_0$ given as the
  standard Hopf link in~$S^3$.  By taking the universal abelian cover
  of the exterior $S^1\times S^1\times[0,1]$ of this Hopf link and
  then taking the lifts of the $(\pm 1/a)$-surgery curves, we obtain
  the universal abelian cover of~$E_{L_0}$ as a surgery diagram in
  $\R^2\times[0,1]$, which is shown in
  Figure~\ref{figure:seed-link-cover-3d}.  It is isotopic to the
  ``flattened'' diagram in Figure~\ref{figure:seed-link-cover}, where
  the covering transformations $x$ and $y$ are given by shifting right
  and down, respectively.  Obviously the framing on the lifts of the
  $(1/a)$-surgery curve is again~$1/a$.  For the $(-1/a)$-surgery
  curve, since the $+1$ twists on the horizontal bands in
  Figure~\ref{figure:seed-link-cover} contribute additional $-2$ to
  the writhe of the base curve, if the framing on a lift is $p/q$ then
  the base curve framing must be $(p-2q)/q$.  It follows that
  $p/q=(2a-1)/a$ as in Figure~\ref{figure:seed-link-cover}.

  \begin{figure}[h]
    \labellist
    \small\hair 0mm
    \pinlabel {$1/a$} at 55 110
    \pinlabel {$-1/a$} at 55 5
    \pinlabel {$1/a$} at 185 110
    \pinlabel {$-1/a$} at 185 23
    \pinlabel {$1/a$} at 357 43
    \pinlabel {$-1/a$} at 278 18
    \pinlabel {\large$\approx$} at 113 57
    \pinlabel {\large$\approx$} at 250 57
    \endlabellist
    \includegraphics{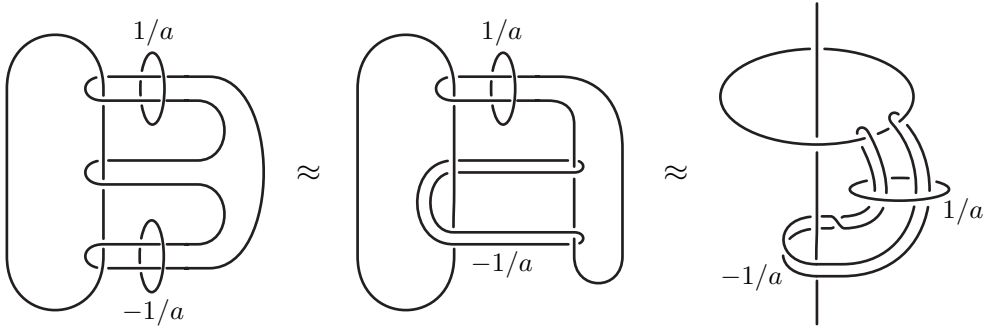}
    \caption{A surgery presentation of the seed link.}
    \label{figure:seed-link-as-surgery}
  \end{figure}

  \begin{figure}[h]
    \includegraphics{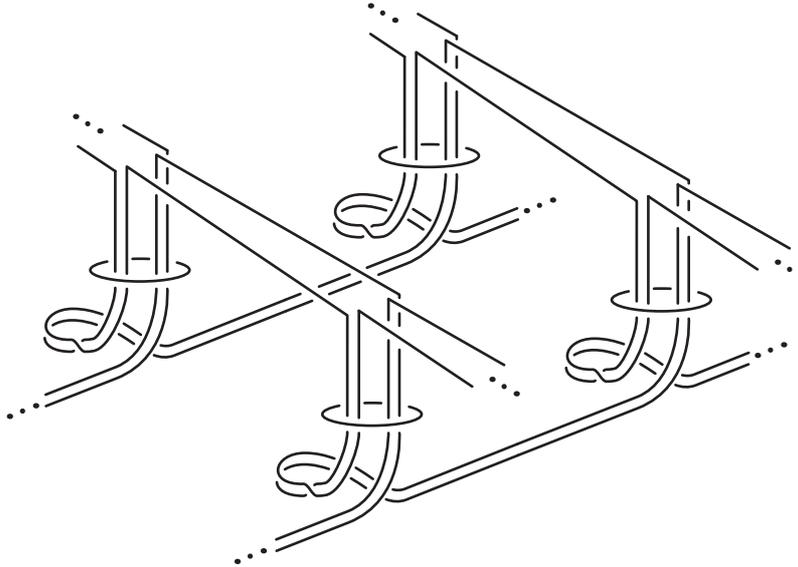}
    \caption{The universal abelian cover of the seed link exterior.}
    \label{figure:seed-link-cover-3d}
  \end{figure}
  
  \begin{figure}[h]
    \labellist
    \footnotesize\hair 0mm
    \pinlabel {$1/a$} at 45 145
    \pinlabel {$1/a$} at 139 145
    \pinlabel {$1/a$} at 45 63
    \pinlabel {$1/a$} at 139 63
    \pinlabel {$(2a-1)/a$} [r] at 44 115
    \pinlabel {$(2a-1)/a$} [r] at 138 115
    \pinlabel {$(2a-1)/a$} [r] at 44 33
    \pinlabel {$(2a-1)/a$} [r] at 138 33
    \pinlabel {$u$} at 181 148
    \pinlabel {$v$} at 195 161
    \endlabellist
    \includegraphics{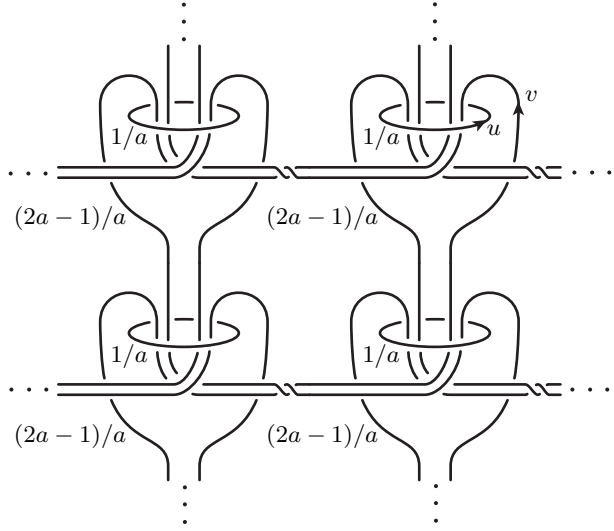}
    \caption{Another diagram of the universal abelian cover.}
    \label{figure:seed-link-cover}
  \end{figure}

  The first homology of the universal abelian cover of $E_{L_0}$ has
  two generators, namely the meridians of surgery curves $u$ and $v$
  in Figure~\ref{figure:seed-link-cover}, as a module over $\Z[x^{\pm
    1}, y^{\pm 1}]$.  The defining relations given by the surgery are
  read from the linking numbers of the various translations
  $x^iy^j(u)$, $x^iy^j(v)$ with the $(1/a)$-curve and
  $((2a-1)/a)$-curve on the boundary of the tubular neighborhood of
  $u$ and~$v$.  From this we obtain a presentation matrix
  \[
  \begin{bmatrix}
    -1 & a(x+y^{-1}-xy^{-1}-1) \\
    a(x^{-1}+y-x^{-1}y-1)  & a(x+x^{-1}+y+y^{-1}-xy^{-1}-x^{-1}y-2)+1 
  \end{bmatrix}
  \]
  of $H_1(E_{L_0};\Z[x^{\pm 1}, y^{\pm 1}])$, with respect to the
  meridians $\mu_u$ and $\mu_v$ of the curves $u$ and~$v$.  Here the
  first and second rows corresponds the $(1/a)$-curve and
  $((2a-1)/a)$-curve of $u$ and $v$ respectively, and the first and
  second columns corresponds to $u$ and~$v$.  For example, the
  $(1,2)$-entry is equal to $\sum_{i,j} \lk\big(x^iy^j(v),
  (\text{$(1/a)$-curve around $u$})\big) x^iy^j$.

  It follows that $H_1(E_{L_0};\Z[x^{\pm 1}, y^{\pm
    1}])\cong \Z[x^{\pm 1}, y^{\pm 1}]/\langle f\overline f\rangle$,
  generated by $\mu_v$.  Since the curve $\eta$ in
  Figure~\ref{figure:seed-link} is isotopic to (the zero-linking
  longitude of) the projection of $v$, $a\cdot[\eta]$ is equal to
  $\mu_v$ in $H_1(E_{L_0};\Z[x^{\pm 1}, y^{\pm 1}])$.  It follows that
  $[\eta]$ is a generator.
\end{proof}



\subsubsection*{Seed knots}

Another building block is a knot satellite configuration as in
Lemma~\ref{lemma:seed-knot} stated below.

\begin{lemma}
  \label{lemma:seed-knot}
  There exist satellite configurations $(K,\alpha)$ of height one with
  $K$ a slice knot such that the Alexander module
  $H_1(E_K;\Z[t^{\pm1}])$ of $K$ is nonzero and generated by the
  homology class of~$\alpha$.
\end{lemma}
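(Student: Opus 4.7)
The plan is to construct an explicit example modeled on a standard slice knot. I would take $K$ to be the Stevedore knot $6_1$, a ribbon knot with Alexander polynomial $2t^2-5t+2$. Its standard genus-one Seifert surface $F$ is a disk with two bands whose cores $a,b$ are each unknotted circles in $S^3$ and form a symplectic basis of $H_1(F)$. A direct computation from the Seifert matrix yields $H_1(E_K;\Z[t^{\pm1}])\cong\Z[t^{\pm1}]/(2t^2-5t+2)$; since $t-1$ is a unit in this quotient (its inverse is $2t-3$), the class of the positive push-off $b^+$ of $b$ into $S^3\setminus F$ generates the Alexander module. Setting $\alpha:=b^+$, the curve $\alpha$ is unknotted in $S^3$, disjoint from $K$, satisfies $\lk(\alpha,K)=0$, and $[\alpha]$ generates $H_1(E_K;\Z[t^{\pm1}])$ as required.

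To verify the height-one satellite condition, I would construct a capped grope of height one for $\alpha$ in $E_\alpha\times[0,1]$ with body disjoint from $K\times[0,1]$. Since $\alpha$ is unknotted, take an embedded disk $D\subset S^3$ bounded by $\alpha$ and lying close to $b$ inside a collar of the band of $F$ containing $b$; then the intersections $D\cap K$ occur in canceling pairs clustered near the twists of that band. Pair these intersections locally along short arcs of $K$ and tube $D$ along each such arc to obtain an embedded oriented surface $\Sigma\subset S^3\setminus K$ with $\partial\Sigma=\alpha$. Push $\Sigma$ into $E_\alpha\times[0,1]$ via a product collar, yielding the body, which is disjoint from $K\times[0,1]$ by construction. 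For each tube, the symplectic basis of $\Sigma$ contributes a meridional curve $m_i$ of $K$ (unknotted, bounding its meridian disk as a cap meeting $K$ transversely in one point) together with a longitudinal curve $\ell_i$ formed by a short $K$-arc concatenated with a short $D$-arc; the latter lies in a small $3$-ball and is therefore unknotted, bounding an embedded disk in $S^3$ that serves as its cap.

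The main obstacle will be to choose the initial disk $D$ and the pairing of intersections so that each longitudinal curve $\ell_i$ is genuinely unknotted in $S^3$; this is where the explicit band picture of the Stevedore knot is essential, with the canceling intersections being amenable to local pairing within each twist of the band containing $b$. Granting this geometric verification, embedding and mutual disjointness of the body and caps follow by placing the tubes at disjoint locations along $K$ and each cap in a small ball near its tube, completing the construction of the required height-one capped grope for $(K,\alpha)$.
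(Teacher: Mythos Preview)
Your approach is essentially the paper's: both exhibit the Stevedore knot together with a specific curve $\alpha$ and verify the conditions by direct inspection, with the paper's own argument being terser still (it simply displays a figure of $K$ and $\alpha$ and asserts that ``straightforward computation'' confirms everything). The geometric check you flag as the main obstacle---that the longitudinal curves on the tubes are unknotted---is precisely what the paper leaves to visual inspection of its figure, so your sketch is, if anything, more explicit than the original.
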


It is folklore that such $(K,\alpha)$ is not rare.  For example,
\cite[Theorem~5.18]{Cha:2003-1} gives a construction of a ribbon knot
with Alexander module $\Z[t^{\pm1}]/\langle P(t)^2\rangle$ for any
polynomial $P(t)$ with integral coefficients satisfying $P(1)=\pm1$
and $P(t^{-1})=P(t)$ up to multiplication by~$\pm t^r$, and for this
knot it is not difficult to see that there is a curve $\alpha$ with
the desired property.  As an explicit example, straightforward
computation shows that Stevedore's knot $K$ with the curve $\alpha$
illustrated in Figure~\ref{figure:stevedore-knot} satisfies
Lemma~\ref{lemma:seed-knot}.

\begin{figure}[h]
  \labellist
  \small\hair 0mm
  \pinlabel {$\alpha$} [r] at 94 40
  \endlabellist
  \includegraphics[scale=.8]{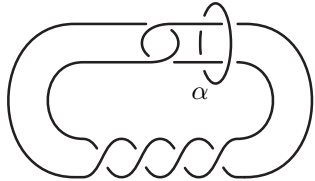}
  \caption{Stevedore's knot with a curve $\alpha$ bounding a grope of
    height one.}
  \label{figure:stevedore-knot}
\end{figure}

\subsubsection*{Cochran-Teichner knot}

Let $J$ be the knot given by Cochran and Teichner
in~\cite[Figure~3.6]{Cochran-Teichner:2003-1}.  
We need the following nice properties of~$J$.

\begin{lemma}[\cite{Cochran-Teichner:2003-1}]
  \label{lemma:cochran-teichner-knot}
  \leavevmode\Nopagebreak
  \begin{enumerate}
  \item $\int_{S^1} \sigma_J(\omega)\, d\omega \ne 0$, where
    $\sigma_J$ is the Levine-Tristram signature function of~$J$.
  \item For any connected sum $J_0$ of copies of $J$, if $(L,\eta)$ is
    a satellite configuration of height $n$, then $L(\eta,J_0)$ is
    height $n+2$ grope concordant to~$L$.
  \end{enumerate}
\end{lemma}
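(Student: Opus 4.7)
Part (1) follows immediately from the construction of $J$ in \cite{Cochran-Teichner:2003-1}: their knot is assembled from specific building blocks so that $\int_{S^1} \sigma_J(\omega)\,d\omega \neq 0$. I would simply quote this property rather than reproduce the signature computation.

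For part (2), the plan has two stages. First, I would verify that $J$ bounds a height 2 capped grope in $D^4$. Cochran and Teichner's knot $J$ admits a genus one Seifert surface $F$ in $S^3$ whose symplectic basis consists of an unknot $\alpha$ and a knot $\beta$ of Arf invariant zero; the unknot bounds a disk in $S^3$, and $\beta$ bounds a height 1 capped grope in $D^4$ by Freedman's Arf theorem (see \cite[Chapter~2]{Freedman-Quinn:1990-1}). Gluing these to $F$ (pushed slightly into $D^4$) and adjoining caps yields a height 2 capped grope bounded by $J$. Boundary connect-summing $k$ copies gives the analogous structure for $J_0 = \#^k J$.

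Second, I would construct the height $n+2$ annular grope concordance from $L$ to $L(\eta, J_0)$ in $S^3 \times [0,1]$ by combining the height 2 grope bounded by $J_0$ with the height $n$ satellite capped grope $G$ for $(L,\eta)$. Embed $E_\eta \times [0,1]$ naturally into $S^3 \times [0,1]$, so that $G$ sits inside with $\partial G = \eta \times \{t_0\}$ at some fixed $t_0 \in (0,1)$. I then realize the satellite modification as an infection along $\eta \times \{t_0\}$: starting from the product $L \times [0,1]$, tube in parallel copies of the Seifert surface $F$ of $J_0$ at the infection points, producing the 1st stage of the annular cobordism from $L$ to $L(\eta, J_0)$. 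The derived curves on this 1st stage are parallel copies of $\alpha$ and $\beta$, which I would extend to height $n+1$ gropes by stacking the low-height gropes they naturally bound (from $J_0$'s grope structure) onto parallel copies of $G$, in the spirit of the composition procedure in Proposition~\ref{proposition:composition-satellite-conf}. The total height is then $1 + (n+1) = n+2$, as required.

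The main technical obstacle is arranging all of these surfaces to be disjointly embedded and correctly framed. The caps of $G$ meet $L \times [0,1]$ precisely in the strands of $L$ passing through the infection region, and these intersections are absorbed as pieces of the 1st stage surface rather than as transverse intersections, exactly as in the composition-of-satellites argument. Parallel copies of $G$ and of $J_0$'s subgropes can be placed in pairwise disjoint fibres over the Seifert surface $F$, avoiding each other by standard general position. Framings are compatible thanks to the $0$-linking conventions built into both the satellite configuration and annular grope definitions. This disjointness and framing bookkeeping is essentially parallel to that carried out in the proof of Proposition~\ref{proposition:composition-satellite-conf}, with capped gropes of satellite configurations replaced by annular gropes of concordances.
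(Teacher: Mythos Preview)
The paper's own proof is a two-line citation: part (1) is \cite[Lemma~4.5]{Cochran-Teichner:2003-1}, and part (2) is declared an immediate consequence of (a link version of) \cite[Corollary~3.14]{Cochran-Teichner:2003-1}. Your treatment of (1) matches this exactly.

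For (2) you go further and attempt to reconstruct the Cochran--Teichner argument. The overall shape is right, but there are two imprecisions worth flagging. First, your description of $J$ (genus one Seifert surface with symplectic basis an unknot and an Arf-invariant-zero knot, the latter bounding a height~1 capped grope ``by Freedman's Arf theorem'') does not obviously match the actual knot in \cite[Figure~3.6]{Cochran-Teichner:2003-1}, and the appeal to Arf invariant here conflates class (lower-central-series) gropes with symmetric height gropes; you should check the specific grope structure Cochran--Teichner build rather than inferring it from Arf. Second, and more substantively, your stacking step is underspecified: to invoke the mechanism of Proposition~\ref{proposition:composition-satellite-conf} you need the caps of the height-$2$ capped grope for $J_0$ to meet the infection region in disks whose boundaries are meridians of $J_0$, hence parallels of $\eta$ under the satellite identification, so that each such disk can be replaced by a parallel copy of~$G$. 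You assert this absorption happens but do not arrange it; the Cochran--Teichner argument does carry this out, but it requires placing the grope for $J_0$ carefully relative to the satellite gluing, which your sketch does not do. Since the paper itself is content to cite, either tighten these two points or simply defer to \cite[Corollary~3.14]{Cochran-Teichner:2003-1} as the paper does.
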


We note that by applying (2) to
$(L,\eta)=(\text{unknot},\text{meridian})$, $J=L(\eta,J)$ bounds a
height 2 grope in~$D^4$.

\begin{proof}
  (1) is \cite[Lemma~4.5]{Cochran-Teichner:2003-1}.  (2) is an
  immediate consequence of (a link version of)
  \cite[Corollary~3.14]{Cochran-Teichner:2003-1}.
\end{proof}

\subsection{Construction of examples}
\label{subsection:construction-of-examples}

In the remaining part of this section we assume the following:

\begin{enumerate}
\item[(C1)] $(L_0,\eta)$ is a satellite configuration satisfying
  Lemma~\ref{lemma:properties-of-seed-link}, e.g., the seed link in
  Figure~\ref{figure:seed-link}.
\item[(C2)] $(K_0,\alpha_0)$, \ldots, $(K_{n-2},\alpha_{n-2})$ are satellite
  configurations satisfying Lemma~\ref{lemma:seed-knot}, e.g., the
  Stevedore configuration in Figure~\ref{figure:stevedore-knot}.
\item[(C3)] $J_0$ is a connected sum of copies of a knot satisfying
  Lemma~\ref{lemma:cochran-teichner-knot}.
\end{enumerate}

We define a two-component $L$ by an iterated satellite construction as
follows: let $J_k=K_{k-1}(\alpha_{k-1},J_{k-1})$ for $k=1,\ldots,n-1$
inductively.  Define $L=L_0(\eta,J_{n-1})$.

The link $L$ can be described alternatively, by reversing the order of
the satellite constructions: define $L_1=L_0(\eta,K_{n-2})$ and
$L_{k}=L_{k-1}(\alpha_{n-k},K_{n-k-1})$ for $k=2,\ldots,n-1$.  Note
that as in Proposition~\ref{proposition:composition-satellite-conf},
$\alpha_{n-k}$ can be viewed as a curve in $E_{K_{n-k}}\subset
E_{L_{k-1}}$ so that the inductive definition makes sense.  Finally
let $L_{n} = L_{n-1}(\alpha_{0},J)$.  Then the link $L_n$ is exactly
our $L$ defined above.

We note that since the curves $\eta$ and $\alpha_k$ are in the
commutator subgroup of $\pi_1(E_{L_0})$ and $\pi_1(E_{K_k})$
respectively, an induction shows that the top level curve $\alpha_0$
lies in the $n$th derived subgroup $\pi_1(E_{L_{n-1}})^{(n)}$.

Observe that if $(L_0,\eta)$ is the one given in
Figure~\ref{figure:seed-link}, then each component of $L_n$ is
unknotted since the union of $\eta$ and any one of the two components
of $L_0$ is a trivial link.

\begin{proposition}
  \label{proposition:height-(n+2)-grope-concordant-to-H}
  The link $L$ is height $n+2$ grope concordant to the Hopf link.
\end{proposition}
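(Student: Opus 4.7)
The plan is to use the alternative description
\[
L_1 = L_0(\eta, K_{n-2}), \quad L_k = L_{k-1}(\alpha_{n-k}, K_{n-k-1}) \text{ for } 2 \le k \le n-1, \quad L = L_{n-1}(\alpha_0, J_0),
\]
and to build the grope concordance in two stages: first produce a satellite configuration $(L_{n-1},\alpha_0)$ of height $n$ by iterating Proposition~\ref{proposition:composition-satellite-conf}, and then apply Lemma~\ref{lemma:cochran-teichner-knot}(2) once to pay the final height-$n+2$ grope concordance in the last satellite step.

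First I would establish by induction on $k=0,1,\ldots,n-1$ that $(L_k,\alpha_{n-1-k})$ is a satellite configuration of height $k+1$, with the convention $L_0=L_0$ and $\alpha_{n-1}=\eta$. The base case $k=0$ is hypothesis (C1) via Lemma~\ref{lemma:properties-of-seed-link}(2). For the inductive step, $(L_{k-1},\alpha_{n-k})$ is height $k$ by the inductive hypothesis and $(K_{n-k-1},\alpha_{n-k-1})$ is height $1$ by (C2), so Proposition~\ref{proposition:composition-satellite-conf} delivers $(L_k,\alpha_{n-k-1})$ of height $k+1$. In particular, $(L_{n-1},\alpha_0)$ is a satellite configuration of height~$n$.

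Next I would observe that $L_{n-1}$ is honestly concordant to the Hopf link. Indeed, each $K_i$ is a slice knot by (C2), and infection along an unknotted curve by a slice knot preserves topological concordance class: pushing a slice disk for $K_i$ into $D^4$ and gluing its complement into the product cobordism $L_{k-1}\times [0,1]$ along a neighborhood of $\alpha_{n-k}\times\{\frac12\}$ yields an annular concordance from $L_{k-1}$ to $L_k$. Stacking these $n-1$ concordances with the concordance from $L_0$ to the Hopf link supplied by Lemma~\ref{lemma:properties-of-seed-link}(1) gives an actual concordance from $L_{n-1}$ to the Hopf link.

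Finally, Lemma~\ref{lemma:cochran-teichner-knot}(2) applied to the height-$n$ satellite configuration $(L_{n-1},\alpha_0)$ and to the knot $J_0$ (a connected sum of copies of the Cochran--Teichner knot, hence eligible) yields that $L = L_{n-1}(\alpha_0,J_0)$ is height $n+2$ grope concordant to $L_{n-1}$. Since grope concordance is transitive in height (stacking two annular gropes of heights $h_1$ and $h_2$ produces an annular grope of height $\min(h_1,h_2)$) and since an honest concordance is a grope concordance of arbitrary height, we can stack the height-$n+2$ grope concordance from $L$ to $L_{n-1}$ with the concordance from $L_{n-1}$ to the Hopf link to conclude that $L$ is height $n+2$ grope concordant to the Hopf link. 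The main obstacle is purely bookkeeping, namely keeping the indexing of $\alpha_i$'s and $K_i$'s straight and verifying the two elementary facts about slice-knot infection and stacking; both are standard given the already-established composition proposition and Cochran--Teichner lemma.
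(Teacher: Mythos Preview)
Your proof is correct and follows essentially the same route as the paper: inductively apply Proposition~\ref{proposition:composition-satellite-conf} to obtain that $(L_{n-1},\alpha_0)$ has height~$n$, invoke Lemma~\ref{lemma:cochran-teichner-knot}(2) to get a height $n+2$ grope concordance from $L$ to $L_{n-1}$, and then use sliceness of the $K_i$ together with Lemma~\ref{lemma:properties-of-seed-link}(1) to concord $L_{n-1}$ to the Hopf link. Your write-up is in fact slightly more detailed than the paper's (you spell out the induction indexing and the stacking argument explicitly), but there is no substantive difference in strategy.
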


\begin{proof}
  Note that $(L_{n-1},\alpha_{0})$ is a satellite configuration of
  height~$n$ by
  Proposition~\ref{proposition:composition-satellite-conf} applied
  inductively.  Therefore it follows that our $L=L_n$ is height $n+2$
  grope concordant to~$L_{n-1}$ by
  Lemma~\ref{lemma:cochran-teichner-knot}.  Recall that for a link $R$
  and a curve $\beta\subset S^3-R$, $R(\beta,J)$ is concordant to
  $R(\beta,J')$ if $J$ is concordant to~$J'$; a concordance is
  obtained by filling in $(S^3-\nu(\beta),R)\times [0,1]$ with the
  exterior of a concordance between $J$ and~$J'$.  Applying this
  inductively to our second description of the links $L_i$, it follows
  that $L_{n-1}$ is concordant to $L_0$ since each $K_i$ is slice.
  Consequently $L_{n-1}$ is concordant to the Hopf link by
  Lemma~\ref{lemma:properties-of-seed-link}.
\end{proof}

\subsection{Proof of the nonexistence of Whitney tower concordance}

The remaining part of this section is devoted to the proof of the
following.  From now on $\Lh$ denotes the Hopf link, and $L$ denotes
our link constructed above.

\begin{theorem}
  \label{theorem:nonexistence-n.5-solvably-cobordism}
  The exterior $E_L$ of $L$ is not $n.5$-solvably cobordant to the
  Hopf link exterior $E_{\Lh}$.
\end{theorem}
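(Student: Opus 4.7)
The plan is to argue by contradiction using Amenable Signature Theorem~\ref{theorem:amenable-signature-for-solvable-cobordism}(I). Suppose $W$ is an $n.5$-solvable cobordism from $E_L$ to $E_{\Lh}$. I would construct a homomorphism $\phi\colon \pi_1(E_L \cup_\partial -E_{\Lh}) \to G$ extending to $\pi_1(W)$, where $G$ is an amenable group lying in $D(\Q)$ with $G^{(n+1)}=\{e\}$, and show that $\rhot(E_L \cup_\partial -E_{\Lh},\phi)$ simultaneously vanishes (by the theorem) and is nonzero (by direct calculation using the iterated satellite structure), yielding the contradiction.

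The group $G$ should be built to mirror the iterated satellite construction of $L$, in the spirit of Cochran--Orr--Teichner. Starting from $\pi_1(W)\twoheadrightarrow H_1(W)$, at each of the $n$ inductive steps one extends to a further quotient of $\pi_1(W)$ through which a chosen character on the meridian of the next infecting knot factors, using that the Alexander module of each building block $(L_0,\eta)$ and $(K_i,\alpha_i)$ is nonzero and generated by the class of the corresponding satellite curve (Lemma~\ref{lemma:properties-of-seed-link}(3), Lemma~\ref{lemma:seed-knot}). The resulting solvable quotient $G$ has derived length at most $n+1$, is amenable and in $D(\Q)$, and the meridian of the outermost infecting knot $J_0$ maps to an element of infinite order in~$G$. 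Alternatively, one may simply take $G=\pi_1(W)/\pi_1(W)^{(n+1)}_H$ using Harvey's torsion-free derived series and invoke a Dwyer-type injectivity as in the proof of Theorem~\ref{theorem:whitney-tower-and-harvey-cha-orr-rho-invariant} to transport the non-triviality of the characters from $\pi_1(E_L)$ across~$W$.

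To invoke case~(I), I would verify $\bt_1(E_L;\N G)=0$. Since $L_0$ is concordant to the Hopf link we have $\lk(L_0)=\pm 1$, and because each infection curve $\eta,\alpha_i$ lies in the commutator subgroup of its ambient link exterior, infections preserve linking numbers, so $\lk(L)=\pm 1$. By construction the abelianization $\pi_1(E_L)\to \Z^2$ factors through $\phi$, so Theorem~\ref{theorem:two-component-links-with-b^2_1=0} gives $\bt_1(E_L;\N G)=0$. Amenable Signature Theorem~\ref{theorem:amenable-signature-for-solvable-cobordism}(I) then forces $\rhot(E_L\cup_\partial -E_{\Lh},\phi)=0$.

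The final step is to obtain the contradicting non-vanishing of $\rhot(E_L\cup_\partial -E_{\Lh},\phi)$ by constructing an explicit 4-manifold $V$ bounding $E_L \cup_\partial -E_{\Lh}$ and computing $\lsign V - \sign V$ via the iterated satellite structure: build $V$ by starting with the trivial cobordism on $E_{\Lh}$ and successively replacing tubular neighborhoods of the infection curves by Seifert-surface exteriors of the infecting knots. Each slice infection by $K_i$ contributes zero to $\rhot$, while the outermost infection by $J_0$ contributes a positive integer multiple of the Cochran--Teichner integral $\int_{S^1}\sigma_J(\omega)\,d\omega$, which is nonzero by Lemma~\ref{lemma:cochran-teichner-knot}(1). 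The principal obstacle is controlling the restriction of $\phi$ to the various infection meridians through the cobordism $W$: specifically, guaranteeing that the meridian of $J_0$ maps to an infinite order element of $G$, so that the above integral (rather than a potentially vanishing discrete signature at a root of unity) actually appears in the calculation. This is exactly what the inductive construction of $G$ (or the Dwyer injectivity argument) is designed to achieve, using that the Alexander-module generators (the curves $\eta$ and $\alpha_i$) in each building block remain nontrivial in successive derived quotients.
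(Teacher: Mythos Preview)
Your overall architecture matches the paper's, but there are two genuine gaps.

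First, the claim that ``each slice infection by $K_i$ contributes zero to $\rhot$'' is unjustified and in general false: the restriction of $\phi$ to $\pi_1(M(K_i))$ has no reason to extend over any slice disk complement, and you likewise do not control the base term $\rhot(E_{L_0}\cup_\partial E_{\Lh},\phi)$ (your ``trivial cobordism on $E_{\Lh}$'' does not match $E_{L_0}$). The paper does \emph{not} try to make these terms vanish. Instead it glues standard satellite cobordisms $E_0,\ldots,E_{n-1}$ onto $W$ to form $W_0$ with $\partial W_0 = M(J_0)\cup\bigcup_k M(K_k)\cup(E_{L_0}\cup_\partial E_{\Lh})$, uses $S_G(E_k)=0$ and the Amenable Signature Theorem for $W$ to obtain
\[
\rhot(M(J_0),\phi_0) = -\rhot(E_{L_0}\cup_\partial E_{\Lh},\phi_0) - \sum_k \rhot(M(K_k),\phi_0),
\]
and then invokes the Cheeger--Gromov universal bound $|\rhot(M,\cdot)|\le C_M$ on the right-hand side. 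The contradiction comes from taking $J_0$ to be a connected sum of enough copies of $J$ so that $|\int\sigma_{J_0}|$ exceeds $C_{E_{L_0}\cup_\partial E_{\Lh}}+\sum_k C_{M(K_k)}$; this is precisely why condition~(C3) is stated the way it is.

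Second, you correctly flag the nontriviality of $\phi$ on the meridian of $J_0$ as the principal obstacle, but neither of your sketches suffices. The Dwyer-injectivity route only transports nontriviality from $\pi_1(E_L)/\pi_1(E_L)^{(n+1)}_H$ to $\pi_1(W)/\pi_1(W)^{(n+1)}_H$; you still owe a proof that the meridian is nontrivial in the \emph{source}, which is not automatic. The paper takes $G=\pi_1(W_0)/\cP^{n+1}\pi_1(W_0)$ for the mixed-coefficient series on the enlarged $W_0$ and proves by downward induction (Theorem~\ref{theorem:meridian-under-mixed-coefficient-quotient}) that each meridian $\mu_k$ survives in $\cP^{n-k}/\cP^{n-k+1}$. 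The inductive step needs that $(W_{k+1},\phi_{k+1})$ is a \emph{Blanchfield bordism}, so that $P=\Ker\{tH_1(M(J_{k+1});RG)\to tH_1(W_{k+1};RG)\}$ is self-annihilating for the higher-order Blanchfield pairing; then the hypothesis that $[\alpha_k]$ (resp.\ $[\eta]$) generates a nonzero torsion Alexander module with nondegenerate pairing forces $[\alpha_k]\notin P$. Establishing the Blanchfield-bordism property for the glued-up $W_{k+1}$ (not just for $W$) is itself a nontrivial step handled in Section~\ref{subsection:Blanchfield-bordism}.
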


By Corollary~\ref{corollary:link-whitney-tower-solvability}, it
follows that our $L$ is not height $n+2.5$ grope (nor Whitney tower)
concordant to the Hopf link.  This completes the proof of
Theorem~\ref{theorem:whitney-tower-concordance-to-hopf-link}.

In the proof of
Theorem~\ref{theorem:nonexistence-n.5-solvably-cobordism}, we combine
our Amenable Signature
Theorem~\ref{theorem:amenable-signature-for-solvable-cobordism} with a
construction of 4-dimensional cobordisms and a higher order
Blanchfield pairing argument, in the same way as done in
\cite[Section~4.3]{Cha:2010-01}.  This is modeled on (but technically
simpler than) an earlier argument due to Cochran, Harvey, and Leidy,
which appeared in~\cite{Cochran-Harvey-Leidy:2009-1}.

\subsubsection*{Cobordism associated to an iterated satellite construction}

Suppose $W$ is an $n.5$-solvable cobordism between $E_L$
and~$E_{\Lh}$.  We recall that associated to a satellite construction
applied to a framed circle $\eta$ in a 3-manifold $Y$ using a knot
$J$, there is a standard cobordism from $M(J)\cup Y$ to $Y(\eta,J)$,
namely $(M(J)\times[0,1] \cup Y\times [0,1])/\sim$, where the tubular
neighborhood of $\eta\subset Y\times 0$ is identified with the solid
tori $(M(J) - \inte E_L)\times 0$ \cite[p.\
1429]{Cochran-Harvey-Leidy:2009-1}.  In particular our satellite
construction gives a standard cobordism $E_k$ from $M(J_k)\cup M(K_k)$
to $M(J_{k+1})$ for $k=0,\ldots,n-2$, and $E_{n-1}$ from
$M(J_{n-1})\cup (E_{L_0}\cup_\partial E_{\Lh})$ to $E_L\cup_\partial
E_{\Lh}$.  Define $W_n=W$, and for $k=n-1,n-2,\ldots,0$, define $W_k$
by
\begin{align*}
  W_k & = E_{k} \amalgover{M(J_{k+1})} E_{k+1} \amalgover{M(J_{k+2})}
  \cdots \amalgover{M(J_{n-1})} E_{n-1} \amalgover{E_L\cupover{\partial}
    E_{\Lh}} W_{n}
  \\
  & = E_{k} \amalgover{M(J_{k+1})} W_{k+1}.
\end{align*}
Note that $\partial W_n=E_{L}\cup_\partial E_{\Lh}$ and $\partial
W_k=M(J_k) \cup M(K_k) \cup M(K_{k+1})\cup \cdots \cup M(K_{n-2})\cup
(E_{L_0}\cup_\partial E_{\Lh})$ for $k<n$.

\subsubsection*{Representations on mixed-type commutator quotients}

To construct solvable representations to which we apply Amenable
Signature
Theorem~\ref{theorem:amenable-signature-for-solvable-cobordism}, we
use mixed-coefficient commutator series $\{\cP^k \pi\}$ as in
\cite[Section~4.1]{Cha:2010-01} and
\cite[Section~3.1]{Cha-Orr:2011-01}.  For the reader's convenience we
repeat the definition: for a group $\pi$ and sequence $\cP=(R_0,
\ldots, R_n)$ where each $R_k$ a commutative ring with unity,
$\cP^k\pi$ is defined inductively by $\cP^0\pi := \pi$ and
\[
\cP^{k+1}\pi := \Ker \Big\{ \cP^k \pi \to \frac{\cP^k\pi}{[\cP^k\pi,
  \cP^k\pi]} \to \frac{\cP^k\pi}{[\cP^k\pi, \cP^k\pi]}\otimesover{\Z}
R_k \big\}
\]
Here we state the following facts, which are easily verified from the
definition. Suppose $\cP=(R_0, \ldots, R_n)$ where $R_k=\Q$ for $k<n$
and $R_n=\Q$ or $\Z/{p}$ for $p$ a fixed prime.  Then for $k\le n$,
$\cP^{k}\pi$ is the $k$th rational derived subgroup.  In particular
$\pi/\cP^{k}\pi$ is PTFA for $k\le n$.  We have $\pi^{(k)} \subset
\cP^k \pi$, and consequently for $G=\pi/\cP^{n+1} \pi$,
$G^{(n+1)}=\{e\}$.  Also, by \cite[Lemma~4.3]{Cha:2010-01}, $G$ is
amenable and in~$D(R_n)$.

For $W_0$ defined above, we have the following:

\newtheorem*{nontriviality-theorem}
{A special case of Theorem~\ref{theorem:meridian-under-mixed-coefficient-quotient}}

\begin{nontriviality-theorem}
  The projection
  \[
  \phi_0\colon \pi_1(W_0) \to G:=\pi_1(W_0)/\cP^{n+1}\pi_1(W_0)
  \]
  sends the meridian of $J_0$ that lies in $M(J_0)\subset \partial
  W_0$ to an element in the abelian subgroup
  $\cP^n\pi_1(W_0)/\cP^{n+1}\pi_1(W_0)$ which has order $\infty$ if
  $R_n=\Q$, and has order $p$ if $R_n=\Z/p$.
\end{nontriviality-theorem}

Its proof is deferred to Section~\ref{subsection:Blanchfield-bordism}.

\subsubsection*{Application of Amenable Signature Theorem}

As an abuse of notation, we denote by $\phi_0$ various homomorphisms
induced by~$\phi_0$ (e.g., the restrictions of $\phi_0$ on
$\pi_1(M(J_0))$, $\pi_1(E_{L_0}\cup_\partial E_{H})$, and
$\pi_1(M(K_k))$).  For a 4-manifold $X$ over $G$, we denote the
$L^2$-signature defect by $S_G(X):=\lsign_G(X)-\sign(X)$.  Then, the
$\rhot$-invariant of $\partial W_0$ is given by
\begin{multline*}
  \rhot(M(J_0),\phi_0)+ \rhot(E_{L_0}\cup_\partial E_H,\phi_0) +
  \sum_{k=0}^{n-2} \rhot(M(K_k),\phi_0)\\
  = \rhot(\partial W,\phi_0) = S_G(W_n) + \sum_{k=0}^{n-1} S_G(E_k).
\end{multline*}
Recall that $W_n$ is assumed to be an $n.5$-solvable cobordism between
$E_H$ and~$E_L$.  Since the abelianization $\pi_1(W_n) \to \Z^2$
decomposes as $\pi_1(W_1) \xrightarrow{\phi_0} G \to G/G^{(1)} =
\Z^2$, we have $\bt_1(E_H;\N G)=0$ by
Theorem~\ref{theorem:two-component-links-with-b^2_1=0}.  Therefore, by
applying our Amenable Signature
Theorem~\ref{theorem:amenable-signature-for-solvable-cobordism} (I),
we obtain
\[
\rhot(E_L\cup E_H,\phi_0)=S_G(W_n)=0.
\]
Also, according to \cite[Lemma~2.4]{Cochran-Harvey-Leidy:2009-1},
$S_G(E_k)=0$ for each~$k$.  It follows that
\[
  \rhot(M(J_0),\phi_0)=-\rhot(E_{L_0}\cup_\partial E_H,\phi_0) -
  \sum_{k=0}^{n-2} \rhot(M(K_k),\phi_0).
\]

Due to Cheeger and Gromov \cite[p.~23]{Cheeger-Gromov:1985-1} (see
also the discussion of \cite[Theorem~2.9]{Cochran-Teichner:2003-1}),
for any closed 3-manifold $M$ there is a bound $C_M$ such that
$|\rhot(M,\phi)| \le C_M$ for any homomorphism $\phi$ of~$\pi_1(M)$.
Therefore, if
\[
\big|\rhot(M(J_0),\phi_0)\big|>C_{E_{L_0}\cup_\partial E_H} +
\sum_{k=0}^{n-2} C_{M(K_k)}
\]
then we derive a contradiction.  That is, $E_L$ and $E_H$
are not $n.5$-solvably cobordant.

The invariant $\rhot(M(J_0),\phi_0)$ can be given explicitly as
follows.  By
Theorem~\ref{theorem:meridian-under-mixed-coefficient-quotient}, the
map $\phi_0$ restricted on $\pi_1(M(J_0))$ has image $\Z$ if $R_n=\Q$,
$\Z/p$ if $R_n=\Z/p$.  Therefore by the $L^2$-induction property and
known computation of the abelian $\rhot$-invariant of a knot, (e.g.,
see \cite[Proposition~5.1]{Cochran-Orr-Teichner:2002-1},
\cite[Corollary~4.3]{Friedl:2003-5},
\cite[Lemma~8.7]{Cha-Orr:2009-01}), we have
\[
\rhot(M(J_0),\alpha) =
\begin{cases}
  \int_{S^1} \sigma_{J_0}(w)\,dw & \text{if }R_n=\Q \\[1ex]
  \sum_{k=0}^{p-1} \sigma_{J_0}(e^{2\pi k\sqrt{-1}/p}) &\text{if }
  R_n=\Z/p
\end{cases}
\]
where $\sigma_{J_0}(\omega)$ is the Levine-Tristram signature function
of~$J_0$.  Combining all these, we have proven the following:

\begin{theorem}
  \label{theorem:not-being-(n.5)-solvable-cobordant}
  Suppose $(L_0,\eta)$, $(K_k,\alpha_k)$ are fixed and satisfy
  {\rm(C1)}, {\rm(C2)} in the beginning of
  Section~\ref{subsection:construction-of-examples}.  If either
  $\int\sigma_{J_0}(w)\,dw$ or $\sum_{k=0}^{p-1} \sigma_{J_0}(e^{2\pi
    k\sqrt{-1}/p})$, for some prime $p$, is sufficiently large, then
  for our link $L$, the exterior $E_L$ is not $n.5$-solvably cobordant
  to~$E_H$.  Consequently $L$ is not height $n+2.5$ Whitney tower
  concordant and not height $n+2.5$ grope concordant to the Hopf link.
\end{theorem}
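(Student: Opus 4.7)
Suppose, for contradiction, that $W=W_n$ is an $n.5$-solvable cobordism between $E_L$ and $E_H$. The plan is to stack $W_n$ with the standard satellite cobordisms $E_0,\ldots,E_{n-1}$ (defined in the excerpt above the statement) to obtain $W_0$, choose a suitable solvable amenable quotient $G=\pi_1(W_0)/\cP^{n+1}\pi_1(W_0)$ with $\cP=(\Q,\ldots,\Q,R_n)$, apply Amenable Signature Theorem~\ref{theorem:amenable-signature-for-solvable-cobordism}~(I) to $W_n$, and derive a contradiction from a lower bound on $|\rhot(M(J_0),\phi_0)|$.

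First I will verify the hypotheses of Amenable Signature Theorem~(I). By construction $G^{(n+1)}=\{e\}$, and $G$ is amenable and lies in $D(R_n)$ by \cite[Lemma~4.3]{Cha:2010-01}. The abelianization $\pi_1(W_n)\to\Z^2$ factors through $\phi_0|_{\pi_1(W_n)}$, so by Theorem~\ref{theorem:two-component-links-with-b^2_1=0} applied to the Hopf link (which has linking number $\pm 1$), we obtain $\bt_1(E_H;\N G)=0$. Hence Amenable Signature Theorem~(I) gives $S_G(W_n)=\rhot(E_L\cup_\partial E_H,\phi_0)$, whereas by \cite[Lemma~2.4]{Cochran-Harvey-Leidy:2009-1} the satellite cobordism signature defects $S_G(E_k)$ all vanish. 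Adding these up along $\partial W_0$ yields
\[
\rhot(M(J_0),\phi_0)+\rhot(E_{L_0}\cup_\partial E_H,\phi_0)+\sum_{k=0}^{n-2}\rhot(M(K_k),\phi_0)=0.
\]

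Next I invoke the non-triviality statement (the special case of Theorem~\ref{theorem:meridian-under-mixed-coefficient-quotient} recorded in the excerpt): the meridian $\mu_{J_0}$ of $J_0$ in $M(J_0)\subset\partial W_0$ maps under $\phi_0$ into $\cP^n\pi_1(W_0)/\cP^{n+1}\pi_1(W_0)$ and has infinite order (resp.\ order $p$) according as $R_n=\Q$ (resp.\ $R_n=\Z/p$). Granting this, the restriction of $\phi_0$ to $\pi_1(M(J_0))$ factors through the abelian group $\Z$ or $\Z/p$, so by the $L^2$-induction formula and the standard identification of abelian $\rhot$-invariants of knots with Levine--Tristram signatures, $\rhot(M(J_0),\phi_0)$ equals $\int_{S^1}\sigma_{J_0}(\omega)\,d\omega$ when $R_n=\Q$, and $\sum_{k=0}^{p-1}\sigma_{J_0}(e^{2\pi k\sqrt{-1}/p})$ when $R_n=\Z/p$. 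Now apply the Cheeger--Gromov universal bound: there exist constants $C_{E_{L_0}\cup_\partial E_H}$ and $C_{M(K_k)}$ bounding $|\rhot(-,\phi)|$ over all representations; if $|\rhot(M(J_0),\phi_0)|$ exceeds the sum of these constants, the vanishing equation above is impossible. Choosing $J_0$ (a connected sum of enough copies of Cochran--Teichner's knot, whose signature integral is nonzero by Lemma~\ref{lemma:cochran-teichner-knot}~(1)) to make either $|\int\sigma_{J_0}|$ or $|\sum_k\sigma_{J_0}(e^{2\pi k\sqrt{-1}/p})|$ arbitrarily large produces the desired contradiction. The theorem then follows, and Corollary~\ref{corollary:link-whitney-tower-solvability} translates the conclusion into the statement about height $n+2.5$ Whitney tower and grope concordance.

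The main obstacle is the non-triviality statement for $\phi_0(\mu_{J_0})$, i.e.\ the special case of Theorem~\ref{theorem:meridian-under-mixed-coefficient-quotient}. The expected strategy for it (to be carried out in Section~\ref{subsection:Blanchfield-bordism}) is a higher-order Blanchfield pairing argument in the spirit of \cite{Cochran-Harvey-Leidy:2009-1} and \cite[Section~4.3]{Cha:2010-01}: by induction on $k$, one shows that the ``tip'' curve $\alpha_0$ (which lives in $\pi_1(W_0)^{(n)}\subset\cP^n\pi_1(W_0)$ by the commutator observations in Section~\ref{subsection:construction-of-examples}) remains non-torsion after passing to $\cP^{n+1}\pi_1(W_0)$, using at each level the fact that the Alexander modules of $L_0$ and the $K_k$ are generated by $\eta$ and the $\alpha_k$ respectively (Lemmas~\ref{lemma:properties-of-seed-link}~(3) and \ref{lemma:seed-knot}), together with self-annihilation of the $(n+1)$-lagrangian under the twisted intersection/linking form in $W_n$. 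Since $\mu_{J_0}$ is identified with a parallel of $\alpha_0$ through the satellite construction, its non-triviality in $\cP^n/\cP^{n+1}$ follows. Everything else is a bookkeeping application of results already available in the excerpt.
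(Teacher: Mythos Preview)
Your proposal is correct and follows essentially the same route as the paper: stack the satellite cobordisms onto the hypothetical $n.5$-solvable cobordism, apply Amenable Signature Theorem~(I) using $\bt_1(E_H;\N G)=0$ from Theorem~\ref{theorem:two-component-links-with-b^2_1=0}, kill the $S_G(E_k)$ via \cite[Lemma~2.4]{Cochran-Harvey-Leidy:2009-1}, invoke Theorem~\ref{theorem:meridian-under-mixed-coefficient-quotient} to identify $\rhot(M(J_0),\phi_0)$ with the signature integral or sum, and finish with the Cheeger--Gromov bound. One small wording point: the sentence ``Amenable Signature Theorem~(I) gives $S_G(W_n)=\rhot(E_L\cup_\partial E_H,\phi_0)$'' is slightly misleading, since that identity is just the definition of $S_G$ applied to $\partial W_n$; what the theorem actually supplies is that both sides \emph{vanish}, which is what you then use.
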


The last conclusion follows from the first conclusion by applying
Corollary~\ref{corollary:link-whitney-tower-solvability}.

In particular, since the Cochran-Teichner knot $J$ satisfies
Lemma~\ref{lemma:cochran-teichner-knot}, if we take as $J_0$ the
connected sum of sufficiently many copies of $J$, then by
Theorem~\ref{theorem:not-being-(n.5)-solvable-cobordant} our $L$ is
not height $n+2.5$ Whitney tower concordant (and so not height $n+2.5$
grope concordant) to the Hopf link.  On the other hand, by
Proposition~\ref{proposition:composition-satellite-conf} and
Lemma~\ref{lemma:cochran-teichner-knot}~(2), $L$ is height $n+2$ grope
concordant (and so height $n+2$ Whitney tower concordant) to the Hopf
link.  This proves
Theorem~\ref{theorem:whitney-tower-concordance-to-hopf-link}, modulo
the proof of
Theorem~\ref{theorem:meridian-under-mixed-coefficient-quotient}, which
is given next.

\subsection{Blanchfield bordism and nontriviality of solvable
  representations}
\label{subsection:Blanchfield-bordism}

We will complete the proof of
Theorem~\ref{theorem:whitney-tower-concordance-to-hopf-link} by
proving the following nontriviality result:

\begin{theorem}[cf.\
  {\cite[Proposition~8.2]{Cochran-Harvey-Leidy:2009-1}},
  {\cite[Theorem~4.14]{Cha:2010-01}}]
  \label{theorem:meridian-under-mixed-coefficient-quotient}
  For each $k=0,1,\ldots,n-1$, the projection $ \phi_k\colon
  \pi_1(W_k) \to \pi_1(W_k)/\cP^{n-k+1}\pi_1(W_k) $ sends a meridian
  $\mu_k\subset M(J_k)\subset \partial W$ of $J_k$ into the abelian
  subgroup $\cP^{n-k}\pi_1(W_k)/\cP^{n-k+1}\pi_1(W_k)$.  Furthermore,
  $\phi_k(\mu_k)$ has order $p$ if $R_n=\Z/p$ and $k=0$, and has order
  $\infty$ otherwise.
\end{theorem}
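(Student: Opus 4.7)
The plan is to prove the statement by downward induction on $k$, starting at $k=n-1$ (where the quotient is merely metabelian) and descending to $k=0$. At each stage the meridian $\mu_k$ of $J_k$ is identified, via the standard satellite cobordism $E_k$ (resp.\ $E_{n-1}$), with a distinguished curve inside a simpler boundary piece of $W_k$: for $k<n-1$ it corresponds to the $0$-longitude of $\alpha_k$ in $M(K_k)$, and for $k=n-1$ it corresponds to the $0$-longitude of $\eta$ in $E_{L_0}$. The task is to promote the classical Alexander-module nontriviality of these curves (given by Lemmas~\ref{lemma:properties-of-seed-link} and~\ref{lemma:seed-knot}) up the mixed-derived series of $\pi_1(W_k)$.

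For the base case $k=n-1$ one has $\cP^{n-k+1}=\cP^{2}$, so we work in the rational metabelian quotient of $\pi_1(W_{n-1})$. Here $\mu_{n-1}$ is conjugate to $[\eta]\in\pi_1(E_{L_0})$, which by Lemma~\ref{lemma:properties-of-seed-link}(3) generates the rational Alexander module of $L_0$ and is nonzero there. Since $W_n$ is an $n.5$-solvable cobordism from $E_L$ to $E_H$, a Dwyer-type injectivity argument of the kind used in the proof of Theorem~\ref{theorem:whitney-tower-and-harvey-cha-orr-rho-invariant} shows that $\pi_1(E_L)/\cP^2\to\pi_1(W_n)/\cP^2$ is injective on rational Alexander information, and a Mayer-Vietoris computation along $E_{n-1}$ then ensures that $[\eta]$ is not killed in the passage to $\pi_1(W_{n-1})/\cP^2$. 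Since $R_0=\Q$ the resulting abelian quotient is torsion-free, so $\phi_{n-1}(\mu_{n-1})$ has infinite order.

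For the inductive step, suppose the claim is true for $k+1$ and decompose $W_k=E_k\cup_{M(J_{k+1})}W_{k+1}$. Under the satellite identification in $E_k$, $\mu_k$ is conjugate to $[\alpha_k]\in\pi_1(M(K_k))$, while the meridian of $K_k$ inside $M(K_k)$ is conjugate to $\mu_{k+1}$. The inductive hypothesis provides a nonzero class for $\phi_{k+1}(\mu_{k+1})$ of the prescribed order; naturality of the mixed-derived series $\cP^\bullet$ under the inclusion $W_{k+1}\hookrightarrow W_k$ transports this to a nontrivial representation of $\pi_1(M(K_k))$ sending the meridian of $K_k$ to an element of the correct order. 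Combined with Lemma~\ref{lemma:seed-knot}, which says $[\alpha_k]$ generates the Alexander module of $K_k$, a higher-order Blanchfield pairing argument of the type appearing in \cite[Section~4.3]{Cha:2010-01} and \cite[Proposition~8.2]{Cochran-Harvey-Leidy:2009-1} forces $[\alpha_k]$, and hence $\mu_k$, to represent a nonzero class in $\cP^{n-k}\pi_1(W_k)/\cP^{n-k+1}\pi_1(W_k)$. Its order tracks the top coefficient ring: $R_{n-k-1}=\Q$ when $k>0$ (infinite order) and $R_n$ when $k=0$ (order $\infty$ or $p$).

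The principal obstacle is executing the higher-order Blanchfield step cleanly. One must verify simultaneously that (i) the Mayer-Vietoris $\pi_1$-computation for $W_k$ does not collapse the relevant quotient, which requires a Dwyer-type injectivity across the satellite cobordism $E_k$ using that $M(J_{k+1})\hookrightarrow E_k$ is an $H_1$-equivalence, and (ii) the localization of the rational Alexander module of $K_k$ at the representation induced by $\phi_{k+1}(\mu_{k+1})$ remains nontrivial on the class $[\alpha_k]$. Both points follow the template of \cite[Section~4]{Cha:2010-01}; the mixed-coefficient series $\cP^\bullet$ was engineered precisely so that the relevant Alexander modules remain torsion and the corresponding Blanchfield pairings remain nonsingular at each inductive step, which is what makes the nontriviality propagate from the metabelian base case all the way to the top quotient $\pi_1(W_0)/\cP^{n+1}\pi_1(W_0)$.
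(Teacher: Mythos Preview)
Your inductive skeleton is right, and you correctly identify $\mu_k$ with $\alpha_k$ (resp.\ $\eta$) via the satellite cobordisms, together with the isomorphism $\cP^{n-k}\pi_1(W_k)/\cP^{n-k+1}\pi_1(W_k)\cong \cP^{n-k}\pi_1(W_{k+1})/\cP^{n-k+1}\pi_1(W_{k+1})$ that lets you transfer the problem to~$W_{k+1}$. The gap is in how you discharge the Blanchfield step.

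For the base case the paper does \emph{not} invoke a Dwyer-type injectivity theorem. It argues instead that the $n.5$-solvable cobordism $W_n$ is automatically a \emph{Blanchfield bordism} (Corollary~\ref{corollary:solvable-cobordism-is-a-blanchfield-bordism}), so that the kernel $P=\Ker\{H_1(E_L;R G)\to H_1(W_n;R G)\}$ is self-annihilating under the Blanchfield form of $L_0$; since $[\eta]$ generates the Alexander module and the form is nondegenerate, $\Bl(\eta,\eta)\ne 0$ forces $[\eta]\notin P$. Your Dwyer-type route is a different mechanism; the Cochran--Harvey injectivity theorem you cite is tailored to the torsion-free derived series, and you do not explain why it carries over to the mixed-coefficient series $\cP^\bullet$, nor why it controls the specific class $[\eta]$ rather than just the abstract quotient.

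The more serious issue is your inductive step. You write that ``both points follow the template of \cite{Cha:2010-01},'' but the paper states explicitly that the Cochran--Harvey--Leidy $n$-bordism criterion (their Theorem~5.9 and Lemma~5.10, and hence the template in \cite{Cha:2010-01}) does \emph{not} apply to the $W_{k+1}$ arising here: the rank hypothesis $\dim_{\K G}H_1(M;\K G)=b_1(M)-1$ fails for the link-exterior boundary pieces. The paper's fix is to introduce the broader notion of a Blanchfield bordism (Definition~\ref{definition:blanchfield-bordism}), prove a numerical criterion for it (Theorem~\ref{theorem:betti-number-and-blanchfield-bordism}), and then show via a Mayer--Vietoris lemma that the $\K G$-dimension of $\Coker\{H_2(\partial\,-;\K G)\to H_2(-;\K G)\}$ is preserved when one glues on a standard satellite cobordism $E_k$ with $\phi([\eta])\ne e$. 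Only after assembling these does one know that $(W_{k+1},\phi_{k+1})$ is a Blanchfield bordism, so that the self-annihilation of $P$ and the Leidy localization $H_1(M(K_k);RG)\cong RG\otimes_{R[t^{\pm1}]}H_1(M(K_k);R[t^{\pm1}])$ can be combined to conclude $[\alpha_k]\notin P$. Your sketch invokes the conclusion of this machinery without supplying or even naming the new ingredient that makes it go through.
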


We remark that although we use only the case of $k=0$ in the proof of
Theorem~\ref{theorem:whitney-tower-concordance-to-hopf-link}, we state
it in this generality since we need an induction argument for
$k=n-1,\ldots, 0$ in its proof.

Our proof of
Theorem~\ref{theorem:meridian-under-mixed-coefficient-quotient} is a
variation of the higher order Blanchfield pairing technique which was
first introduced by Cochran, Harvey, and Leidy, but different from
arguments in earlier papers (e.g., \cite{Cochran-Harvey-Leidy:2009-1},
\cite{Cochran-Harvey-Leidy:2008-1}, \cite{Cha:2010-01}) as discussed
below.

The notion of certain 4-manifolds called \emph{$n$-bordisms}
\cite[Definition~5.1]{Cochran-Harvey-Leidy:2009-1} plays an important
r\^ole in understanding the behavior of solvable coefficient systems in
earlier works.  Its key property is that if a certain rank condition
is satisfied (see, e.g., \cite[Theorem~5.9,
Lemma~5.10]{Cochran-Harvey-Leidy:2009-1}), an $n$-bordism gives a
submodule that annihilates itself under the higher order Blanchfield
pairing of the boundary, generalizing the fact that the classical
Blanchfield pairing of a slice knot is Witt trivial.  This is an
essential ingredient used in several papers to investigate
higher-order coefficient systems.  For example see
\cite{Cochran-Harvey-Leidy:2009-1, Cochran-Harvey-Leidy:2008-1,
  Horn:2010-1, Cha:2010-01}.

Generalizing this, we consider a 4-dimensional bordism that we call a
\emph{Blanchfield bordism}.  Indeed for our purpose we need to use
Blanchfield bordisms to which prior results of Cochran-Harvey-Leidy
\cite[Theorem~5.9, Lemma~5.10]{Cochran-Harvey-Leidy:2009-1} for
$n$-bordisms do not apply.


\subsubsection*{Blanchfield bordism}


Throughout this section, $R=\Z/p$ or a subring of $\Q$, and $G$ is
assumed to be a group whose group ring $R G$ is an Ore domain.  Our
standard example to keep in mind is the case of a PTFA group~$G$ (see
\cite[Proposition~2.5]{Cochran-Orr-Teichner:1999-1} and
\cite[Lemma~5.2]{Cha:2010-01}).  We denote the skew-field of quotient
of $R G$ by~$\K G$.  For a module $M$ over an Ore domain, denote the
torsion submodule of $M$ by~$tM$.

\begin{definition}
  \label{definition:blanchfield-bordism}
  Suppose $W$ is a 4-manifold with boundary and $\phi\colon
  \pi_1(W)\to G$ is a homomorphism.  We call $(W,\phi)$ a
  \emph{Blanchfield bordism} if the following is exact.
  \[
  tH_2(W,\partial W;R G) \to tH_1(\partial W;R G) \to tH_1(W;R G)
  \]
\end{definition}

When the choice of $R$ is not clearly understood, we call $(W,\phi)$
an $R$-coefficient Blanchfield bordism.

The key property of a Blanchfield bordism is the following.  For a
3-manifold $M$ endowed with $\phi\colon \pi_1(M)\to G$ and a subring
$\cR$ of $\K G$ containing $R G$, there is the Blanchfield pairing
\[
\Bl_M\colon tH_1(M;\cR) \times tH_1(M;\cR) \to \K G/\cR
\]
defined as in \cite[Theorem~2.13]{Cochran-Orr-Teichner:1999-1}, Then,
the following is shown by known arguments (see, e.g., \cite[Proof of
Theorem~2.3]{Hillman:2002-1}, \cite[Proof of
Theorem~4.4]{Cochran-Orr-Teichner:1999-1}).  We omit details of its
proof.

\begin{theorem}
  \label{theorem:blanchfield-self-annihilating}
  If $(W,\phi\colon \pi_1(W)\to G)$ is a Blanchfield bordism and $R G
  \subset \cR \subset \K G$, then for any 3-manifold
  $M\subset \partial W$,
  \[
  P:=\Ker\{tH_1(M;\cR) \to tH_1(W;\cR)\}
  \]
  satisfies $\Bl_M(P,P)=0$, namely $P$ annihilates $P$ itself.
\end{theorem}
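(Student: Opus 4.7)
The plan is to follow the classical diagram chase showing that a 3-manifold bounding a 4-manifold with sufficient homological vanishing has Witt-trivial Blanchfield pairing on the kernel of the inclusion map, in the style of Hillman and Cochran-Orr-Teichner cited just before the statement. The Blanchfield bordism hypothesis will play the role of the usual ``twisted rational homology ball'' assumption.

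First I would realize $\Bl_M$ through the Bockstein associated to the short exact sequence $0 \to \cR \to \K G \to \K G/\cR \to 0$. Since $\K G$ is a skew field, $H_*(-;\K G)$ carries no $\cR$-torsion and the connecting map gives a surjection $\beta_M \colon H_2(M;\K G/\cR) \twoheadrightarrow tH_1(M;\cR)$; combined with Poincar\'e duality on $M$, this lets one write $\Bl_M(x,y) = \langle \hat x,\, \mathrm{PD}_M(y)\rangle$ in $\K G/\cR$ for any $\beta_M$-lift $\hat x$ of~$x$.

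The substantive step is, for $x \in P$, to further lift $\hat x$ to a relative class $\bar x \in H_2(W,\partial W;\K G/\cR)$. Starting from the given $R G$-coefficient Blanchfield bordism exactness
\[
tH_2(W,\partial W; R G) \to tH_1(\partial W; R G) \to tH_1(W; R G),
\]
I would first promote this to the $\cR$-coefficient analogue by clearing denominators inside the Ore skew field $\K G$ (every $\cR$-torsion class becomes $R G$-torsion after multiplication by a suitable element of $R G$), and then apply the relative Bockstein $\beta_{(W,\partial W)} \colon H_2(W,\partial W;\K G/\cR) \twoheadrightarrow tH_1(W,\partial W;\cR)$ and chase the naturality square to produce $\bar x$ whose restriction to $M \subset \partial W$ is $\hat x$. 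This is the only place where the Blanchfield bordism hypothesis enters.

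With $\bar x$ in hand, $\Bl_M(x,y)=0$ reduces to Poincar\'e--Lefschetz duality on $W$: since $y\in P$ is null-homologous in $H_1(W;\cR)$, its dual $\mathrm{PD}_M(y)$ extends to a class in $H^2(W;\cR)$, and pairing this extension with the interior lift $\bar x$ produces an element of $\K G/\cR$ that lifts through the Bockstein to $\cR$ and therefore vanishes in the quotient; the naturality diagram identifies this interior pairing with $\langle \hat x, \mathrm{PD}_M(y)\rangle = \Bl_M(x,y)$. The main technical hurdle is the bookkeeping across the three rings $R G \subset \cR \subset \K G$ and the matching of the absolute Bockstein on $M$ with the relative Bockstein on $(W,\partial W)$; once this diagram is drawn, the geometric content---an interior class pairs trivially with a boundary class that extends inward---is identical to the classical slice-knot case. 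The generality that $M$ may be a proper union of components of $\partial W$ rather than all of it is absorbed by observing that the Blanchfield pairing decomposes block-diagonally along components of $\partial W$.
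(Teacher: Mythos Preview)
Your sketch is exactly the standard Bockstein/Poincar\'e--Lefschetz argument from the references the paper cites (Hillman, Cochran--Orr--Teichner); the paper itself omits the proof entirely, so you have already supplied more than it does. Two small repairs: the relative Bockstein lift you need is $H_3(W,\partial W;\K G/\cR)\twoheadrightarrow tH_2(W,\partial W;\cR)$, so that the boundary map then lands in $H_2(\partial W;\K G/\cR)$ where your $\hat x$ lives---the map $H_2(W,\partial W;\K G/\cR)\to tH_1(W,\partial W;\cR)$ you wrote is one degree too low; and your ``clearing denominators'' passage from $RG$ to $\cR$ tacitly uses that $\cR$ is flat over $RG$, which holds for Ore localizations (hence in every application in the paper) but is not literally part of the stated hypothesis.
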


As an example, if $W$ is an $n$-bordism in the sense of
\cite[Definition~5.1]{Cochran-Harvey-Leidy:2009-1}, then for
$\phi\colon \pi_1(W)\to G$ satisfying $G^{(n)}=\{e\}$ and $\dim_{\K G}
H_1(M;\K G)=b_1(M)-1$ for each component $M$ of $\partial W$,
$(W,\phi)$ is an integral Blanchfield bordism by
\cite[Lemma~5.10]{Cochran-Harvey-Leidy:2009-1}.

The following observation provides new examples of Blanchfield
bordisms.

\begin{theorem}
  \label{theorem:betti-number-and-blanchfield-bordism}
  Suppose $W$ is a 4-manifold with boundary, $\phi\colon \pi_1(W)\to
  G$, and there are $\ell_1,\ldots,\ell_m,d_1,\ldots,d_m$ in $H_2(W;R
  G)$ satisfying $\lambda_G(\ell_i,\ell_j)=0$ and $\lambda_G(\ell_i,
  d_j)=\delta_{ij}$ where $\lambda_G$ is the $\Z G$-valued
  intersection pairing on $H_2(W; R G)$.  If $\rank_{R} H_2(W,M;R) \le
  2m$ for some $M\subset \partial W$, then $(W,\phi)$ is a Blanchfield
  bordism.
\end{theorem}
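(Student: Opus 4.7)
The plan is to reformulate Blanchfield bordism as an explicit lifting problem and then exploit the pairing data to construct the required torsion preimage by expanding in the basis coming from $\ell_i, d_j$.

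First, I would reduce the exactness statement to a concrete lifting question. Given $x \in tH_1(\partial W; RG)$ whose image in $H_1(W; RG)$ vanishes, lift $x$ via the ordinary long exact sequence to some $y_0 \in H_2(W, \partial W; RG)$ with $\partial y_0 = x$. Writing $\tilde\ell_i, \tilde d_j$ for the images of $\ell_i, d_j$ under $j\colon H_2(W; RG) \to H_2(W, \partial W; RG)$, it suffices to find coefficients $\alpha_i, \beta_j \in RG$ so that
\[
y := y_0 - \sum_i \alpha_i \tilde\ell_i - \sum_j \beta_j \tilde d_j \in tH_2(W, \partial W; RG),
\]
because then $\partial y = \partial y_0 = x$ automatically, as $\tilde\ell_i, \tilde d_j \in \Im j = \Ker\partial$.

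Second, I would show that $\tilde\ell_i, \tilde d_j$ generate a free $RG$-submodule $L \subset H_2(W, \partial W; RG)$ of rank $2m$. Using the $RG$-valued pairing $H_2(W; RG) \otimes H_2(W, \partial W; RG) \to RG$ from Poincar\'e-Lefschetz duality, any relation $\sum a_i \tilde\ell_i + \sum b_j \tilde d_j = 0$ pairs with $\ell_k$ to give $b_k = 0$ (via $\lambda_G(\ell_k, d_j) = \delta_{kj}$ and $\lambda_G(\ell_k, \ell_i) = 0$), and then with $d_k$ to give $a_k = 0$ using the Hermitian symmetry of $\lambda_G$. Consequently $L \otimes_{RG} \K G$ is a free $\K G$-submodule of $H_2(W, \partial W; \K G)$ of rank $2m$.

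Third, I would establish the bound $\dim_{\K G} H_2(W, \partial W; \K G) \le 2m$, which, combined with the previous paragraph, yields $L \otimes_{RG} \K G = H_2(W, \partial W; \K G)$. From the hypothesis and the standard Ore-domain rank inequality (the $\K G$-analogue of Theorem~\ref{theorem:L2-dim-estimate-of-NG-homology}) one obtains $\dim_{\K G} H_2(W, M; \K G) \le \rank_R H_2(W, M; R) \le 2m$. Transferring this bound from $(W, M)$ to $(W, \partial W)$ is the main obstacle; a natural route is Poincar\'e-Lefschetz duality $H_k(W, A; \K G) \cong H^{4-k}(W, \overline{\partial W - A}; \K G)$ together with the long exact sequence of the triple $(W, \partial W, M)$, used to control the contribution of $H_1(\partial W, M; \K G)$.

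Fourth, once $L \otimes_{RG} \K G = H_2(W, \partial W; \K G)$, the image of $y_0$ in $H_2(W, \partial W; \K G)$ admits a unique $\K G$-linear expansion in the basis $\tilde\ell_i, \tilde d_j$. Evaluating the intersection pairing of $y_0$ against $\ell_k$ and $d_k$ extracts the coefficients explicitly as
\[
\beta_k = \lambda_G(\ell_k, y_0), \qquad \alpha_k = \lambda_G(d_k, y_0) - \sum_j \beta_j\, \lambda_G(d_k, d_j),
\]
both of which lie in $RG$ because the underlying pairing is $RG$-valued. Then $y := y_0 - \sum_i \alpha_i \tilde\ell_i - \sum_j \beta_j \tilde d_j$ has vanishing image in $H_2(W, \partial W; \K G)$, hence lies in $tH_2(W, \partial W; RG)$, and satisfies $\partial y = x$, completing the proof.
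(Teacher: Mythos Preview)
Your overall strategy---lift $x$ to some $y_0$, then subtract an $RG$-combination of $\tilde\ell_i,\tilde d_j$ to make it torsion---is correct and is essentially an explicit unrolling of the argument the paper cites as Lemma~\ref{lemma:betti-number-rank}. Steps 1, 2, and 4 are fine.

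The genuine gap is step 3. The inequality $\dim_{\K G} H_2(W,\partial W;\K G)\le 2m$ need not hold under the stated hypotheses: by Poincar\'e duality $\dim_{\K G} H_2(W,\partial W;\K G)=\dim_{\K G} H_2(W;\K G)$, and components of $\partial W\smallsetminus M$ can contribute to $H_2(W;\K G)$ without affecting $H_2(W,M;\K G)$. The long exact sequence of the triple you mention does not remove this, since $H_1(\partial W,M;\K G)$ is not controlled by anything in the hypotheses. You even flag this as ``the main obstacle'' but do not resolve it.

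The fix is to aim for less. You do not need $\tilde\ell_i,\tilde d_j$ to span all of $H_2(W,\partial W;\K G)$; you only need them to span the image of $j\colon H_2(W;\K G)\to H_2(W,\partial W;\K G)$, because the image of $y_0$ in $H_2(W,\partial W;\K G)$ already lies in $\Im j$: indeed $\partial y_0=x$ is torsion, hence vanishes in $H_1(\partial W;\K G)$, so $y_0\in\Ker\partial=\Im j$ over $\K G$. Now $\dim_{\K G}\Im j=\dim_{\K G}\Coker\{H_2(\partial W;\K G)\to H_2(W;\K G)\}$, and this cokernel is a quotient of $\Coker\{H_2(M;\K G)\to H_2(W;\K G)\}\subset H_2(W,M;\K G)$, hence has $\K G$-dimension $\le\dim_{\K G} H_2(W,M;\K G)\le\rank_R H_2(W,M;R)\le 2m$. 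Since your $2m$ elements $\tilde\ell_i,\tilde d_j$ are independent and lie in $\Im j$, they span it; then step 4 applies verbatim. This corrected version of step 3 is exactly the paper's route: it bounds the cokernel of $H_2(\partial W;\K G)\to H_2(W;\K G)$ rather than $H_2(W,\partial W;\K G)$ itself.
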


An immediate consequence of
Theorem~\ref{theorem:betti-number-and-blanchfield-bordism} is the
following:

\begin{corollary}
  \label{corollary:solvable-cobordism-is-a-blanchfield-bordism}
  If $W$ is an $n$-solvable cobordism between bordered 3-manifolds $M$
  and $M'$, then for any $\phi\colon\pi_1(W)\to G$ with
  $G^{(n)}=\{e\}$, $(W,\phi)$ is an $R$-coefficient Blanchfield
  bordism for $R=\Z/p$ or any subring of $\Q$.
\end{corollary}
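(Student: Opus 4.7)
The plan is to deduce this as a direct application of Theorem~\ref{theorem:betti-number-and-blanchfield-bordism}, with the lagrangian/duals data coming from the defining structure of an $n$-solvable cobordism. First I would unpack the hypothesis: since $W$ is $n$-solvable, Definition~\ref{definition:lagrangian-dual-solvable-cobordism} furnishes an $n$-lagrangian $L$ with $n$-duals in $H_2(W;\Z[\pi/\pi^{(n)}])$, that is, classes $\ell_1,\ldots,\ell_m,d_1,\ldots,d_m$ satisfying $\lambda_n(\ell_i,\ell_j)=0$ and $\lambda_n(\ell_i,d_j)=\delta_{ij}$, where $m=\frac12\rank H_2(W,M)$.

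Next, because $G^{(n)}=\{e\}$, the given homomorphism $\phi\colon\pi=\pi_1(W)\to G$ factors canonically as $\pi\twoheadrightarrow\pi/\pi^{(n)}\to G$, inducing a ring map $\Z[\pi/\pi^{(n)}]\to\Z G\to R G$. I would use this to push the classes $\ell_i,d_j$ forward to elements of $H_2(W;R G)$. By naturality of the equivariant intersection form, the image classes satisfy $\lambda_G(\ell_i,\ell_j)=0$ and $\lambda_G(\ell_i,d_j)=\delta_{ij}$ (the latter because the ring maps send $1\mapsto 1$, so the Kronecker relations survive literally).

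It remains to verify the rank bound $\rank_R H_2(W,M;R)\le 2m$. By Lemma~\ref{lemma:H_i(W,M)-for-H_1-cobordism}, $H_i(W,M)=0$ for $i\ne 2$ and $H_2(W,M)$ is free abelian of rank~$2m$. The universal coefficient theorem then gives $H_2(W,M;R)\cong H_2(W,M)\otimes R$ (the Tor term vanishes because $H_1(W,M)=0$), so $\rank_R H_2(W,M;R)=2m$ for both $R=\Q$ (or any subring) and $R=\Z/p$. With all hypotheses of Theorem~\ref{theorem:betti-number-and-blanchfield-bordism} in place, we conclude that $(W,\phi)$ is an $R$-coefficient Blanchfield bordism.

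I do not expect any serious obstacle: this is essentially a bookkeeping exercise translating the derived-series language of $n$-solvability into the intersection-data language required by Theorem~\ref{theorem:betti-number-and-blanchfield-bordism}. The only point that deserves a bit of care is confirming that the change-of-coefficients map $\Z[\pi/\pi^{(n)}]\to R G$ is well-defined, which uses precisely the hypothesis $G^{(n)}=\{e\}$, and that the rank computation does not require any lifting of cycles but only Lemma~\ref{lemma:H_i(W,M)-for-H_1-cobordism}.
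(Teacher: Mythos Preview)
Your proposal is correct and follows exactly the approach the paper takes: the paper simply declares the corollary an immediate consequence of Theorem~\ref{theorem:betti-number-and-blanchfield-bordism}, and your argument supplies precisely the expected verification (pushing the $n$-lagrangian and $n$-duals through the ring map $\Z[\pi/\pi^{(n)}]\to RG$ granted by $G^{(n)}=\{e\}$, and checking $\rank_R H_2(W,M;R)=2m$ via Lemma~\ref{lemma:H_i(W,M)-for-H_1-cobordism}).
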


\begin{proof}
  [Proof of
  Theorem~\ref{theorem:betti-number-and-blanchfield-bordism}] 

  We claim:
  \begin{align*}
    2m \ge \rank_{R} H_2(W,M;R) &
    \ge \dim_{\K G} H_2(W,M;\K G) \\
    & \ge \dim_{\K G} \Coker\{H_2(\partial W;\K G) \rightarrow
    H_2(W;\K G)\}.
  \end{align*}
  By applying \cite[Corollary~2.8]{Cochran-Harvey:2006-01} (or
  its $\Z/p$-analogue if $R=\Z/p$) to the chain complex $C_*(W,M;RG)$,
  we obtain the second inequality.  Next, $H_2(W,M;\K G)$ has the
  submodule $\Coker\{H_2(M;\K G) \rightarrow H_2(W;\K G)\}$ which
  surjects onto $\Coker\{H_2(\partial W;\K G) \rightarrow H_2(W;\K
  G)\}$.  From this the third inequality follows. 

  Now the proof is completed by the following fact stated as
  Lemma~\ref{lemma:betti-number-rank} below, which is proven by known
  arguments due to Cochran-Orr-Teichner (see the proof of
  \cite[Lemma~4.5]{Cochran-Orr-Teichner:1999-1}; see also
  \cite[Lemma~5.10]{Cochran-Harvey-Leidy:2009-1}).
\end{proof}

\begin{lemma}
  [\cite{Cochran-Orr-Teichner:1999-1,Cochran-Harvey-Leidy:2009-1}]
  \label{lemma:betti-number-rank}
  If there are $\ell_1,\ldots,\ell_m,d_1,\ldots,d_m$ as in
  Theorem~\ref{theorem:betti-number-and-blanchfield-bordism} and the
  cokernel of $H_2(\partial W;\K G) \rightarrow H_2(W;\K G)$ has $\K
  G$-dimension $\le 2m$, then $(W,\phi)$ is a Blanchfield bordism (and
  the equality holds).
\end{lemma}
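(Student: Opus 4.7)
The plan is to use the pair $\ell_i,d_i$ together with Poincaré--Lefschetz duality to split $\Im j_\ast$ off as a direct summand up to torsion, and then run a torsion-level diagram chase on the long exact sequence of $(W,\partial W)$. First I would check that $L=\langle \ell_1,\ldots,\ell_m,d_1,\ldots,d_m\rangle\subset H_2(W;RG)$ is free of rank $2m$: the intersection pairing on $L$ in this basis has the block form $\left[\begin{smallmatrix}0&I\\I&\ast\end{smallmatrix}\right]$, whose determinant is a unit in $RG$, so the adjoint $\lambda_G|_L\colon L\to L^{\ast}=\Hom_{RG}(L,RG)$ is an $RG$-isomorphism. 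Since $\lambda_G$ factors through $j_\ast$ via Poincaré--Lefschetz duality $H_2(W,\partial W;RG)\cong H^2(W;RG)$ followed by the Kronecker map to $\Hom_{RG}(H_2(W;RG),RG)$, restricting to $L$ yields an $RG$-linear map $H_2(W,\partial W;RG)\to L^{\ast}$ whose composition with $j_\ast|_L$ is the isomorphism above. Hence $j_\ast|_L$ is split injective and $H_2(W,\partial W;RG)=j_\ast(L)\oplus K$, where $K$ is the kernel of the map to $L^{\ast}$.

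Second, I would use the cokernel hypothesis to pin down $\Im j_\ast$ up to torsion. Localizing, $j_\ast(L)\otimes\K G$ has $\K G$-dimension $2m$ and sits inside $\Im(j_\ast\otimes\K G)=\Coker\{H_2(\partial W;\K G)\to H_2(W;\K G)\}$, whose $\K G$-dimension is at most $2m$ by hypothesis; so the two agree and in particular equal $2m$. This yields at once the ``equality'' clause of the statement and shows $\Im j_\ast/j_\ast(L)$ is $RG$-torsion. Using the splitting $H_2(W,\partial W;RG)=j_\ast(L)\oplus K$, the modular law gives $\Im j_\ast=j_\ast(L)\oplus T$ with $T=\Im j_\ast\cap K$ satisfying $T\otimes\K G=0$, i.e., $T\subseteq tK$; and since $j_\ast(L)$ is free, $tH_2(W,\partial W;RG)=tK$.

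Finally, I would chase torsion through the long exact sequence. The connecting homomorphism induces an injection $\bar\partial\colon H_2(W,\partial W;RG)/\Im j_\ast\cong K/T\hookrightarrow H_1(\partial W;RG)$ whose image is $\Ker\{i_\ast\colon H_1(\partial W;RG)\to H_1(W;RG)\}$; injectivity of $\bar\partial$ forces it to restrict to a bijection $t(K/T)\to \Ker\{tH_1(\partial W;RG)\to tH_1(W;RG)\}$. It remains to see that $tH_2(W,\partial W;RG)=tK$ surjects onto $t(K/T)$, and this is immediate: a lift $b\in K$ of a torsion class $\bar b\in t(K/T)$ satisfies $rb\in T\subseteq tK$ for some $r\in RG$, so $b$ itself is torsion. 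The composition $tH_2(W,\partial W;RG)\twoheadrightarrow t(K/T)\xrightarrow{\bar\partial}\Ker\{tH_1(\partial W;RG)\to tH_1(W;RG)\}$ then realizes the Blanchfield bordism exactness. The main obstacle is the $RG$-unimodularity of $\lambda_G|_L$ and the resulting direct-sum splitting of $j_\ast(L)$; the rest is clean bookkeeping of torsion once that splitting is in hand.
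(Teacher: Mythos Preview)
Your argument is correct and is essentially the standard argument that the paper defers to (it cites \cite[Proof of Lemma~4.5]{Cochran-Orr-Teichner:1999-1} and \cite[Lemma~5.10]{Cochran-Harvey-Leidy:2009-1} rather than writing out its own proof). The splitting of $j_*(L)$ as a free rank-$2m$ direct summand of $H_2(W,\partial W;RG)$ via the unimodular block $\left[\begin{smallmatrix}0&I\\I&*\end{smallmatrix}\right]$, the dimension count forcing $\Im j_*/j_*(L)$ to be torsion, and the torsion chase through the long exact sequence are exactly the ingredients of the Cochran--Orr--Teichner argument; your write-up is in fact somewhat cleaner in isolating the step $tK\twoheadrightarrow t(K/T)$.
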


\begin{remark}
  \label{remark:dimension-of-cokernel}
  From the above proof, we also see that in
  Corollary~\ref{corollary:solvable-cobordism-is-a-blanchfield-bordism}
  the cokernel of $H_2(\partial W;\K G) \to H_2(W;\K G)$ has the
  ``right'' dimension, namely $b_2(W,M)$.
\end{remark}

\begin{proof}
[Proof of
  Theorem~\ref{theorem:meridian-under-mixed-coefficient-quotient}]

Recall the conclusion of
Theorem~\ref{theorem:meridian-under-mixed-coefficient-quotient}: the
projection
\[
\phi_k\colon \pi_1(W_k) \to \pi_1(W_k)/\cP^{n-k+1}\pi_1(W_k)
\]
sends the meridian $\mu_k\subset M(J_k)\subset \partial W$ to an
element in $\cP^{n-k}\pi_1(W_k)/\cP^{n-k+1}\pi_1(W_k)$, which has
order $\infty$ if $k\ne 0$ or $R_n=\Q$, $p$ otherwise.  In fact it
suffices to show the nontriviality of $\phi_k(\mu_k)\in
\cP^{n-k}\pi_1(W_k)/\cP^{n-k+1}\pi_1(W_k)$, since
$\cP^{n-k}\pi_1(W_k)/\cP^{n-k+1}\pi_1(W_k)$ is a torsion free abelian
group (if $k\ne 0$ or $R_n=\Q$) or a vector space over~$\Z/p$
(otherwise).

We use an induction on $k=n-1,n-2,\ldots,0$.  For the case $k=n-1$, we
start by considering $\phi_n\colon \pi_1(W_n)\to
G:=\pi_1(W_n)/\cP^{1}\pi_1(W_n)$.  Recall $G\cong
H_1(W_n)/\text{torsion}=\Z^2$.  Let $R=R_1$ and $A=H_1(E_L;R G)\cong
H_1(E_{L_0};R[x^{\pm1},y^{\pm1}])$, the Alexander module.


We need the fact that ($A$ is torsion and) the Blanchfield pairing
$\Bl_{L}=\Bl_{L_0}$ on $A$ is nondegenerate.  This is a general fact
for linking number one two-component links due to
Levine~\cite{Levine:1982-1}, or can be seen by straightforward
computation in our case.

Recall that we use the curve $\eta\subset E_{L_0}$ in the satellite
construction.  Denote a paralel copy of $\eta$ in $E_L$ by $\eta$ as
an abuse of notation.  $\Bl_{L}(\eta,\eta)$ is nonzero, since $[\eta]$
generates the nontrivial torsion module $A$ by
Lemma~\ref{lemma:properties-of-seed-link} and $\Bl_{L}$ on $A$ is
nondegenerate.  Therefore $[\eta]\notin P=\Ker\{A \to H_1(W_n; R
G)\}$, since $P\subset P^{\perp}$ by Theorem
~\ref{theorem:blanchfield-self-annihilating}.  Since $\cP^2\pi_1(W_n)$
is the kernel of $\cP^1\pi_1(W_n) \rightarrow H_1(W_n;R G)$ by
definition (see \cite[Section 4.1]{Cha:2010-01}), it follows that
$[\eta]\notin \cP^2\pi_1(W_n)$.  As in \cite[Assertion~1 in
Section~5.2]{Cha:2010-01}, we have
\[
\cP^{n-k}\pi_1(W_k)/\cP^{n-k+1}\pi_1(W_k) \cong
\cP^{n-k}\pi_1(W_{k+1})/\cP^{n-k+1}\pi_1(W_{k+1}).
\]
Looking at the $k=n-1$ case and observing that $\eta$ is isotopic to
$\mu_{n-1}\subset M(J_{n-1})$ in $W_{n-1}$, it follows that
$[\mu_{n-1}]$ is nontrivial in
$\cP^1\pi_1(W_{k+1})/\cP^2\pi_1(W_{k+1})$.  This is the desired
conclusion for $k=n-1$.

Now we assume that the conclusion is true for all $i>k$.  Let
$G=\pi_1(W_{k+1})/\cP^{n-k}\pi_1(W_{k+1})$, $R=R_{n-k}$, and
$A=H_1(M(J_{k+1});R G)$ for convenience.

We claim that $(W_{k+1},\phi_{k+1}\colon\pi_1(W_{k+1})\to G)$ is an
$R$-coefficient Blanchfield bordism.
To show this we need the following lemma:

\begin{lemma}
  Suppose $W$ is a 4-manifold with a boundary component $N'=N(\eta,J)$
  obtained by a satellite construction.  Let $E$ be the associated
  standard cobordism from $M(J)\cup N$ to~$N'$, and let $V=W\cup_{N'}
  E$.  If $\phi\colon \pi_1(V)\to G$ sends $[\eta]$ to a nontrivial
  element, then the inclusion $W\to V$ induces
  \[
  \Coker\{H_2(\partial V;\K G) \rightarrow H_2(V;\K G)\} \cong
  \Coker\{H_2(\partial W;\K G) \rightarrow H_2(W;\K G)\}.
  \]
\end{lemma}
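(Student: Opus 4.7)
The plan is to use Mayer--Vietoris applied to $V=W\cup_{N'} E$ with $\K G$-coefficients, exploiting the hypothesis $\phi([\eta])\ne 1$ in $G$. The main observation is that the satellite cobordism $E$ is ``invisible'' to $\K G$-homology in a way that allows the cokernels for $W$ and for $V$ to be matched up by the inclusion.

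First I would compute $H_*(E;\K G)$. By construction, $E$ is obtained from $(M(J)\times[0,1])\cup(N\times[0,1])$ by identifying the solid torus $\nu(\eta)\times 0$ with $\nu_{M(J)}(J)\times 0$. Apply Mayer--Vietoris with the pieces $A_1=M(J)\times[0,1]$ and $A_2=N\times[0,1]$; the intersection $A_1\cap A_2$ is a solid torus homotopy equivalent to the circle $\eta$. Since $\phi([\eta])-1$ is a unit in the skew field $\K G$, the $\K G$-chain complex of $S^1$ is acyclic, so Mayer--Vietoris collapses to
\[
H_*(E;\K G)\cong H_*(M(J);\K G)\oplus H_*(N;\K G),
\]
realized by the inclusions of the boundary components $M(J)$ and $N$ of $E$.

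Next I would apply Mayer--Vietoris to $V=W\cup_{N'} E$, yielding the exact sequence
\[
H_2(N';\K G)\to H_2(W;\K G)\oplus H_2(E;\K G)\to H_2(V;\K G)\xrightarrow{\partial}H_1(N';\K G)\to H_1(W;\K G)\oplus H_1(E;\K G).
\]
By the computation of $H_2(E;\K G)$, the image of $H_2(E;\K G)$ in $H_2(V;\K G)$ is generated by classes of $H_2(M(J);\K G)$ and $H_2(N;\K G)$; since $M(J),N\subset\partial V$, this image is contained in $\mathcal{B}:=\Im\{H_2(\partial V;\K G)\to H_2(V;\K G)\}$. A similar check shows that the inclusion $W\hookrightarrow V$ carries $\mathcal{A}:=\Im\{H_2(\partial W;\K G)\to H_2(W;\K G)\}$ into $\mathcal{B}$: the $\partial W\setminus N'$-part of $\partial W$ remains in $\partial V$, and the $N'$-part is carried through $E$ into classes of $H_2(M(J))\oplus H_2(N)\subset\mathcal{B}$. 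So one obtains a well-defined map $\Phi\colon H_2(W;\K G)/\mathcal{A}\to H_2(V;\K G)/\mathcal{B}$ induced by $W\hookrightarrow V$.

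The injectivity and surjectivity of $\Phi$ follow by diagram chasing with the above Mayer--Vietoris sequence. For injectivity, if $x\in H_2(W;\K G)$ maps into $\mathcal{B}$, decomposing the witness along the components of $\partial V$ and using that the $M(J)$- and $N$-contributions factor through $E$, Mayer--Vietoris places the discrepancy in the image of $H_2(N';\K G)\to H_2(W;\K G)$, which realizes $x$ modulo $\mathcal{A}$ as coming from $N'\subset\partial W$. For surjectivity, every element of $\Im\{H_2(W)\oplus H_2(E)\to H_2(V)\}$ reduces modulo $\mathcal{B}$ to $\Im\{H_2(W)\to H_2(V)\}$; the remaining classes in $H_2(V;\K G)$ are controlled by $\Im(\partial)=\Ker\{H_1(N';\K G)\to H_1(W;\K G)\oplus H_1(E;\K G)\}$. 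The main obstacle is to show this kernel vanishes, which is expected to follow from the nontriviality of $\phi([\eta])$ together with the $\K G$-homology splitting of $E$ and the satellite description of $N'=N(\eta,J)$, by a careful analysis of the long exact sequence of $(E,N')$ and the identification of the meridian of $\eta$ with the zero-longitude of $J$.
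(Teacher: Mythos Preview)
Your approach and the paper's are the same Mayer--Vietoris strategy, but the paper makes one simplification that short-circuits most of your work and resolves your ``main obstacle'' immediately. Since the meridian of $J$ is identified with a parallel of $\eta$, the hypothesis $\phi([\eta])\ne e$ means $\phi$ is nontrivial on $\pi_1(M(J))$; by \cite[Proposition~2.11]{Cochran-Orr-Teichner:1999-1} this gives $H_1(M(J);\K G)=0$, and hence (by duality for the closed $3$-manifold $M(J)$) also $H_2(M(J);\K G)=0$. So your splitting collapses to $H_i(E;\K G)\cong H_i(N;\K G)$ for $i=1,2$, induced by the inclusion $N\hookrightarrow E$. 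The paper then observes, by the same Mayer--Vietoris computation applied to the decomposition $N'=E_J\cup (N\setminus\nu(\eta))$, that $H_i(N';\K G)\cong H_i(E;\K G)$ for $i=1,2$ as well. In other words, $E$ is a $\K G$-homology cylinder between $N'$ and $N$ in degrees $1$ and~$2$.

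Once you have this, your ``main obstacle'' dissolves: $H_1(N';\K G)\to H_1(E;\K G)$ is an isomorphism, so the connecting map $\partial$ in your Mayer--Vietoris sequence for $V=W\cup_{N'} E$ is zero. In fact the whole diagram chase becomes unnecessary: since $E$ behaves like $N'\times[0,1]$ in $\K G$-homology, Mayer--Vietoris gives $H_i(W;\K G)\cong H_i(V;\K G)$ directly, and the boundary $\partial V$ differs from $\partial W$ only by replacing $N'$ with $N\sqcup M(J)$, which has the same $\K G$-homology. Your injectivity argument is correct as written, but this is the cleaner route.
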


\begin{proof}
  The proof is a straightforward Mayer-Vietoris argument.  We give
  an outline below.  The cobordism $E$ is defined to be
  $M(J)\times[0,1]\cup N\times[0,1]/\sim$, where the solid torus
  $U:=\overline{M(J)-E_J} \cong S^1\times D^2$ in $M(J)=M(J)\times 1$
  is identified with a tubular neighborhood of $\eta\subset N=N\times
  1$.  Applying Mayer-Vietoris to this, one sees that $H_i(N;\K G)
  \cong H_i(E;\K G)$ for $i=1,2$.  Here one needs that $H_1(U;\K G) =
  H_1(M(J);\K G)=0$, which are consequences of $\phi([\eta])\ne e$ by
  \cite[Proposition 2.11]{Cochran-Orr-Teichner:1999-1}.  Similarly one
  sees that $H_i(N';\K G) \cong H_i(E;\K G)$ for $i=1,2$.  This says
  that the cobordism $E$ looks like a cylinder to the eyes of
  $H_i(-;\K G)$ for $i=1,2$.  Applying Mayer-Vietoris to $V=W\cup_{N'}
  E$, the desired conclusion follows.
\end{proof}

Returning to the case of our $W_{k+1}$, the induction hypothesis
$\phi_\ell([\mu_\ell])\ne e$ for $\ell\ge k+1$ enables us to apply the
lemma above repeatedly.  From this we obtain
\[
\Coker\{H_2(\partial W_{k+1};\K G)\rightarrow H_2(W_{k+1};\K G)\}
\cong \Coker\{H_2(\partial W_{n};\K G)\rightarrow H_2(W_{n};\K G)\}.
\]
By
Corollary~\ref{corollary:solvable-cobordism-is-a-blanchfield-bordism},
Remark~\ref{remark:dimension-of-cokernel} and
Lemma~\ref{lemma:betti-number-rank}, it follows that
$(W_{k+1},\phi_{k+1})$ is a Blanchfield bordism.

Now we proceed similarly to the $k=n-1$ case.  We need the following
fact which is due to \cite[Theorem~4.7]{Leidy:2006-1},
\cite[Theorem~5.16]{Cha:2003-1}, \cite[Lemma~6.5,
Theorem~6.6]{Cochran-Harvey-Leidy:2009-1} (see also
\cite[Theorem~5.4]{Cha:2010-01} for a summarized version):
\[
A\cong H_1(M(K_{k});R G) \cong R G \otimes_{R[t^{\pm1}]}
H_1(M(K_k);R[t^{\pm1}])
\]
and the classical Blanchfield pairing on $H_1(M(K_k);R[t^{\pm1}])$
vanishes at $(x,y)$ if the Blanchfield pairing $\Bl$ on $A$ vanishes
at $(1\otimes x, 1\otimes y)$.  Using this, one sees that
$\Bl(1\otimes[\alpha_k],1\otimes[\alpha_k])\ne 0$, since $[\alpha_k]$
generates the nontrivial module $H_1(M(K_{k});R G)$ by
Lemma~\ref{lemma:seed-knot} and the classical Blanchfield pairing of a
knot is nonsingular.  Therefore $[\alpha_k] \notin P=\Ker\{A\to
H_1(W_{k+1};RG)\}$ by
Theorem~\ref{theorem:blanchfield-self-annihilating} applied to the
Blanchfield bordism $(W_{k+1},\phi_{k+1})$.  Finally, proceeding in
the exactly same way as the last part of the $k=n-1$ case, we conclude
that $[\mu_k]$ is nontrivial in
$\cP^{n-k}\pi_1(W_{k})/\cP^{n-k+1}\pi_1(W_{k})$.  This completes the
proof of
Theorem~\ref{theorem:meridian-under-mixed-coefficient-quotient}.
\end{proof}



\bibliographystyle{amsalpha}
\renewcommand{\MR}[1]{}
\bibliography{research}

\end{document}